\documentclass[onefignum,onetabnum]{siamonline181217}

\usepackage{graphicx}
\usepackage{amsfonts}
\usepackage{amsmath,amssymb}
\usepackage[ruled,linesnumbered,vlined,algo2e,resetcount]{algorithm2e}
\usepackage{commath}
\usepackage{xcolor}
\usepackage{caption}
\usepackage{float}
\usepackage{setspace}
\usepackage{enumerate}
\usepackage{hyperref}
\usepackage{bm}
\usepackage{multirow}
\usepackage{cite}
\usepackage{microtype}
\usepackage{verbatim}
\usepackage{array}
\usepackage{graphicx}
\usepackage{subcaption}

\newcolumntype{C}[1]{>{\centering\let\newline\\\arraybackslash\hspace{0pt}}m{#1}}

\newcommand\restr[2]{{
  \left.\kern-\nulldelimiterspace 
  #1 
  \vphantom{\big|} 
  \right|_{#2} 
  }}

\SetKwInput{KwInput}{Input}                
\SetKwInput{KwOutput}{Output}              



\graphicspath{ {/images/} }

\newsiamremark{remark}{Remark}
\newsiamremark{hypothesis}{Hypothesis}
\crefname{hypothesis}{Hypothesis}{Hypotheses}
\newsiamthm{claim}{Claim}

\headers{Parallelizable global QC parameterization of multiply-connected surfaces}{Zhipeng Zhu, Gary P. T. Choi, and Lok Ming Lui}

\title{Parallelizable global quasi-conformal parameterization of multiply-connected surfaces via partial welding\thanks{Submitted to the editors DATE.
\funding{This work was supported in part by the National Science Foundation under Grant No.~DMS-2002103 (to Gary P. T. Choi), and HKRGC GRF under project ID 14305919 (to Lok Ming Lui).}}}

\author{Zhipeng Zhu\thanks{
              Department of Mathematics, The Chinese University of Hong Kong
              (\email{zpzhu@math.cuhk.edu.hk}).}
           \and
           Gary P. T. Choi\thanks{
              Department of Mathematics, Massachusetts Institute of Technology
              (\email{ptchoi@mit.edu}).}
           \and
           Lok Ming Lui\thanks{
              Department of Mathematics, The Chinese University of Hong Kong
              (\email{lmlui@math.cuhk.edu.hk}).}
}

\usepackage{amsopn}

\ifpdf
\hypersetup{
  pdftitle={Parallelizable global quasi-conformal parameterization of multiply-connected surfaces via partial welding},
  pdfauthor={Zhipeng Zhu, Gary P. T. Choi, Lok Ming Lui}
}
\fi

\begin{document}

\maketitle

\begin{abstract}
Conformal and quasi-conformal mappings have widespread applications in imaging science, computer vision and computer graphics, such as surface registration, segmentation, remeshing, and texture map compression. While various conformal and quasi-conformal parameterization methods for simply-connected surfaces have been proposed, efficient parameterization algorithms for multiply-connected surfaces are less explored. In this paper, we propose a novel parallelizable algorithm for computing the global conformal and quasi-conformal parameterization of multiply-connected surfaces onto a 2D circular domain using variants of the partial welding method and the Koebe's iteration. The main idea is to first partition a multiply-connected surface into several subdomains and compute the free-boundary conformal or quasi-conformal parameterizations of them respectively, and then apply a variant of the partial welding algorithm to reconstruct the global mapping. We apply the Koebe's iteration together with the geodesic algorithm to the boundary points and welding paths before and after the global welding to transform all the boundaries to circles conformally. After getting all the updated boundary conditions, we obtain the global parameterization of the multiply-connected surface by solving the Laplace equation for each subdomain. Using this divide-and-conquer approach, the global conformal and quasi-conformal parameterization of surfaces can be efficiently computed. Experimental results are presented to demonstrate the effectiveness of our proposed algorithm. More broadly, the proposed shift in perspective from solving a global quasi-conformal mapping problem to solving multiple local mapping problems paves a new way for computational quasi-conformal geometry.
\end{abstract}

\begin{keywords}
Conformal parameterization, Quasi-conformal parameterization, Partial welding, Multiply-connected surface, Koebe's iteration
\end{keywords}

\begin{AMS}
65D18, 68U05, 52C26, 30C20
\end{AMS}

\section{Introduction}
\label{intro}
In modern applied mathematics and computer science, three-dimensional (3D) surfaces play an important role in many fields, such as brain mapping in medical imaging, 3D model reconstruction in computer graphics, and 3D object detection and classification in computer vision. One important technique for processing 3D models is surface parameterization, which refers to the process of mapping a 3D surface to a two-dimensional (2D) domain based on certain criteria. With the aid of surface parameterization, one can work on the 2D domain instead of on the original 3D surface. For example, to solve a partial differential equation (PDE) on a complicated 3D domain, one can map the domain to a 2D parameter domain and then solve the PDE on it instead. Moreover, with the advancement of 3D scanning and rendering technologies, 3D surfaces with super large size and high resolution can be easily obtained nowadays. Therefore, fast and accurate algorithms for the parameterization of large meshes arise in need. 

Among all the surface parameterization methods, conformal parameterizations are a very special class. Conformality preserves the angular structure at the infinitesimal level, and thus preserves the local geometry. This property is advantageous in many tasks that rely on the preservation of local geometry, such as 3D surface remeshing and image registration. Quasi-conformal (QC) maps are a generalization of conformal maps associated with a complex-valued function defined at each point of the source domain called the Beltrami coefficient. Unlike conformal maps, quasi-conformal maps do not preserve local geometry in general. In particular, the Beltrami coefficient defined at each point of the source domain determines the angular distortion at the infinitesimal level at these points. Also, the bijectivity of quasi-conformal maps can be ensured by enforcing the sup-norm of the Beltrami coefficient to be strictly less than 1. Since conformality is a very strict condition that cannot be ensured in many situations with the presence of other constraints, quasi-conformal maps are often utilized. For instance, in image and surface registration, quasi-conformal maps can be used for achieving a balance between the local geometric distortion and the mismatch in prescribed landmark or intensity information of the registered images and surfaces. In recent years, various algorithms have been proposed for computing conformal and quasi-conformal maps. However, most of them are not designed for large meshes, especially those with more complicated topology such as multiply-connected meshes. 

\begin{figure}[t]
\centering
  \includegraphics[width=\linewidth]{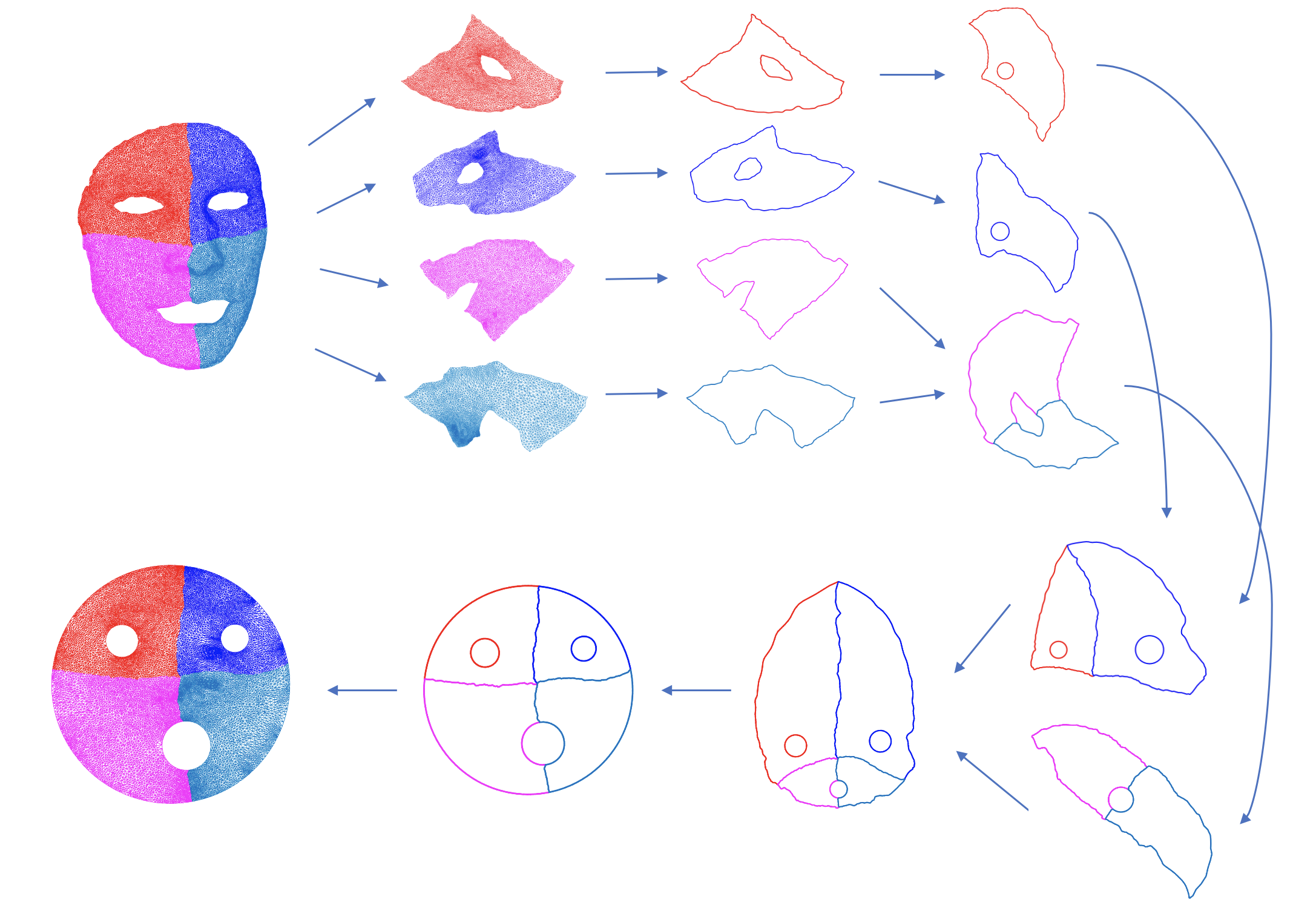}
\caption{An illustration of our proposed method for the global quasi-conformal parameterization of multiply-connected surfaces. Given a multiply-connected surface partitioned into several subdomains, we first compute the free-boundary quasi-conformal parameterization for each of them in parallel. Then, we apply our proposed variants of the partial welding method~\cite{choi2020parallelizable} and the Koebe's iteration~\cite{koebe1910konforme} to find a global conformal mapping of the boundaries of the submeshes to a circular domain with circular holes. Finally, we obtain the global parameterization of the entire surface by solving the Laplace equation on each subdomain in a parallel manner. Note that most of the steps for different subdomains are independent of the other subdomains and hence the method is highly parallelizable. } 
\label{fig:overview}
\end{figure} 

In this paper, we propose a novel parallelizable method for the computation of quasi-conformal parameterization of multiply-connected surfaces onto a 2D circular domain, which refers to a connected domain whose complements are several circular disks. As a special case of quasi-conformal maps, conformal maps can also be efficiently computed by our method. Fig.~\ref{fig:overview} gives an overview of our proposed method. Given a multiply-connected open surface $\mathcal{S}$ and a prescribed Beltrami coefficient $\mu$, we first partition $\mathcal{S}$ into several smaller subdomains. Then, we compute the free-boundary conformal maps from them to $\mathbb{R}^2$ in parallel. We then compose a free-boundary quasi-conformal map with the given Beltrami coefficient for each subdomain in parallel. As computing quasi-conformal maps on several small subdomains in a parallel way is much more efficient than computing the global quasi-conformal map directly, our algorithm is more efficient than many existing global parameterization methods. After computing the initial maps, we utilize an idea called partial welding~\cite{choi2020parallelizable} to glue the boundaries of the flattened subdomains along their common arcs. In particular, since $\mathcal{S}$ is a multiply-connected surface, we propose a variant of the original partial welding algorithm in~\cite{choi2020parallelizable} to achieve this task. Moreover, in order to transform the boundaries to circles, we propose a parallel version of the Koebe's iteration~\cite{koebe1910konforme} that is highly compatible with our algorithm. By the composition formula of Beltrami coefficients in quasi-conformal theory~\cite{gardiner2000quasiconformal}, the partial welding procedure and the Koebe's iteration will not induce any change in the prescribed Beltrami coefficient $\mu$ as every function involved in these steps is conformal. The computation of partial welding and the parallel Koebe's iteration relies on a method called the geodesic algorithm~\cite{marshall2007convergence}, whose convergence is theoretically guaranteed under certain mild conditions. All the computations in these two steps only involve the boundary points and welding paths and hence are highly efficient. Finally, using the new boundary conditions generated by the above procedures, we obtain the global quasi-conformal parameterization by solving the Laplace equation on each subdomain in parallel. 

The rest of this paper is organized as follows. In Section~\ref{sec:2}, we review the previous works on surface parameterization, with an emphasis on conformal and quasi-conformal parameterizations. In Section~\ref{sec:3}, we introduce the mathematical concepts related to this work. In Section~\ref{sec:4}, we describe our proposed method for the global conformal and quasi-conformal parameterizations of multiply-connected surfaces. In Section~\ref{sec:5}, we present experimental results and comparisons with other methods to demonstrate the effectiveness of our method. In Section~\ref{sec:6}, we show several applications of our proposed method in different fields. In Section~\ref{sec:7}, we discuss the limitations of our method and outline possible future research directions.

\section{Related works}
\label{sec:2}
In the past few decades, surface parameterization has attracted tremendous research attention in the area of geometry processing, graphics and vision. Detailed surveys and reviews on the topic can be found in~\cite{floater2005surface,sheffer2006mesh,hormann2007mesh}. In particular, since it is in general impossible to achieve isometric (both area-preserving and angle-preserving) parameterizations except for surfaces with zero Gaussian curvature, two major types of surface parameterization methods are the area-preserving parameterizations and the angle-preserving parameterizations. 

Existing area-preserving parameterization methods include the locally authalic map~\cite{desbrun2002intrinsic}, Lie advection~\cite{zou2011authalic}, optimal mass transport (OMT)~\cite{zhao2013area,cui2019spherical,giri2020open}, density-equaling map (DEM)~\cite{choi2018density,choi2020area} and stretch energy minimization (SEM)~\cite{yueh2019novel}. Although the area structure of the input surface can be well-preserved by these methods, the angle structure is usually significantly distorted. Since the angle structure is closely related to the local geometry of the surface, the distortion in the angle structure may induce obstacles for some applications. In these situations, angle-preserving parameterizations may be more preferable.

Existing conformal parameterization methods for simply-connected open surfaces include least-squares conformal map (LSCM)~\cite{levy2002least}, discrete natural conformal parameterization (DNCP)~\cite{desbrun2002intrinsic}, angle-based flattening (ABF)~\cite{sheffer2001parameterization, sheffer2005abf++, zayer2007linear}, holomorphic 1-form~\cite{gu2003global}, discrete Yamabe flow~\cite{luo2004combinatorial, wang2007brain}, discrete Ricci flow~\cite{jin2008discrete, yang2009generalized, zhang2014unified}, fast disk conformal map~\cite{choi2015fast}, boundary first flattening~\cite{sawhney2017boundary}, linear disk conformal map~\cite{choi2018linear}, conformal energy minimization~\cite{yueh2017efficient}, parallelizable global conformal parameterization (PGCP)~\cite{choi2020parallelizable,choi2022free} and spherical cap conformal map~\cite{shaqfa2021spherical}. For simply-connected closed surfaces, existing spherical conformal parameterization methods include harmonic energy minimization~\cite{gu2004genus,lai2014folding} and its linearizations~\cite{angenent1999laplace, haker2000conformal,choi2015flash,choi2016spherical} and parallelizable global conformal parameterization (PGCP)~\cite{choi2020parallelizable}. While many surfaces in real applications may be multiply-connected, the conformal mapping of multiply-connected surfaces is less studied. Existing conformal mapping methods between multiply-connected planar domains include conformal welding~\cite{marshall2012conformal}, Schwarz--Christoffel map~\cite{crowdy2005schwarz,crowdy2007schwarz}, slit map~\cite{crowdy2006conformal}, and PlgCirMap~\cite{nasser2020plgcirmap}. For the conformal parameterization of multiply-connected surfaces, existing methods include the generalized Koebe's iteration~\cite{zeng2009generalized}, Laurent series~\cite{kropf2014conformal}, discrete conformal equivalence~\cite{bobenko2016discrete}, and poly-annulus conformal map (PACM) \cite{choi2021efficient}. 

Quasi-conformal maps are a generalization of conformal maps with bounded local geometric distortion. As they are less restrictive than conformal maps, there has been an increasing interest in quasi-conformal surface parameterization methods in recent years. Existing methods for computing quasi-conformal parameterization include auxiliary metric~\cite{zeng2009surface}, quasi-Yamabe flow \cite{zeng2012computing}, linear Beltrami solver (LBS)~\cite{lui2013texture,lam2014landmark,choi2016fast}, Beltrami holomorphic flow (BHF)~\cite{lui2012optimization,ng2014teichmuller}, QC iteration~\cite{lui2014teichmuller,meng2016tempo}, extremal quasiconformal map~\cite{weber2012computing}, bounded distortion map~\cite{lipman2012bounded,chien2016bounded}, discrete Beltrami flow~\cite{wong2014computation,wong2015computing}, quasi-conformal energy minimization (QCMC)~\cite{ho2016qcmc}, and least-squares quasi-conformal map (LSQC)~\cite{qiu2019computing}. In recent years, quasi-conformal maps have been used in various applications such as image and surface registration~\cite{lam2014landmark,yung2018efficient,qiu2020inconsistent} and shape analysis~\cite{choi2020tooth,choi2020shape}. 

\section{Mathematical background}
\label{sec:3}
\subsection{Quasi-conformal theory}
\label{sec:3.1}
\indent In this subsection, we briefly introduce quasi-conformal maps on the complex plane and on Riemann surfaces. For details, readers are referred to~\cite{astala2008elliptic,gardiner2000quasiconformal}. 

Quasi-conformal maps are a generalization of conformal maps and can be understood as maps with bounded conformality distortion. An orientation-preserving homeomorphism $f: \Omega \subset \mathbb{C} \to \Omega' \subset \mathbb{C}$ is said to be a \emph{quasi-conformal map} if it satisfies the Beltrami equation:
\begin{equation} 
\dfrac{\partial f}{\partial \bar{z}} = \mu_{f}(z)\dfrac{\partial f}{\partial z}, \label{eq:1}
\end{equation}
where $\mu_{f}(z)$ is a complex-valued Lebesgue-measurable function satisfying $\|\mu_{f}(z)\|_{\infty} < 1$ called the \emph{Beltrami coefficient} of $f$. $\mu_{f}(z)$ encodes the information about the conformality distortion of $f$. If $\mu_{f}(z) = 0$ for all $z$, then Equation~\eqref{eq:1} becomes the Cauchy--Riemann equation and hence $f$ is conformal. Geometrically, a quasi-conformal mapping maps infinitesimal circles to infinitesimal ellipses with eccentricity determined by the Beltrami coefficient (see Fig.~\ref{fig:distortion_inf}). 

\begin{figure}[t]
\centering
  \includegraphics[width=0.8\linewidth]{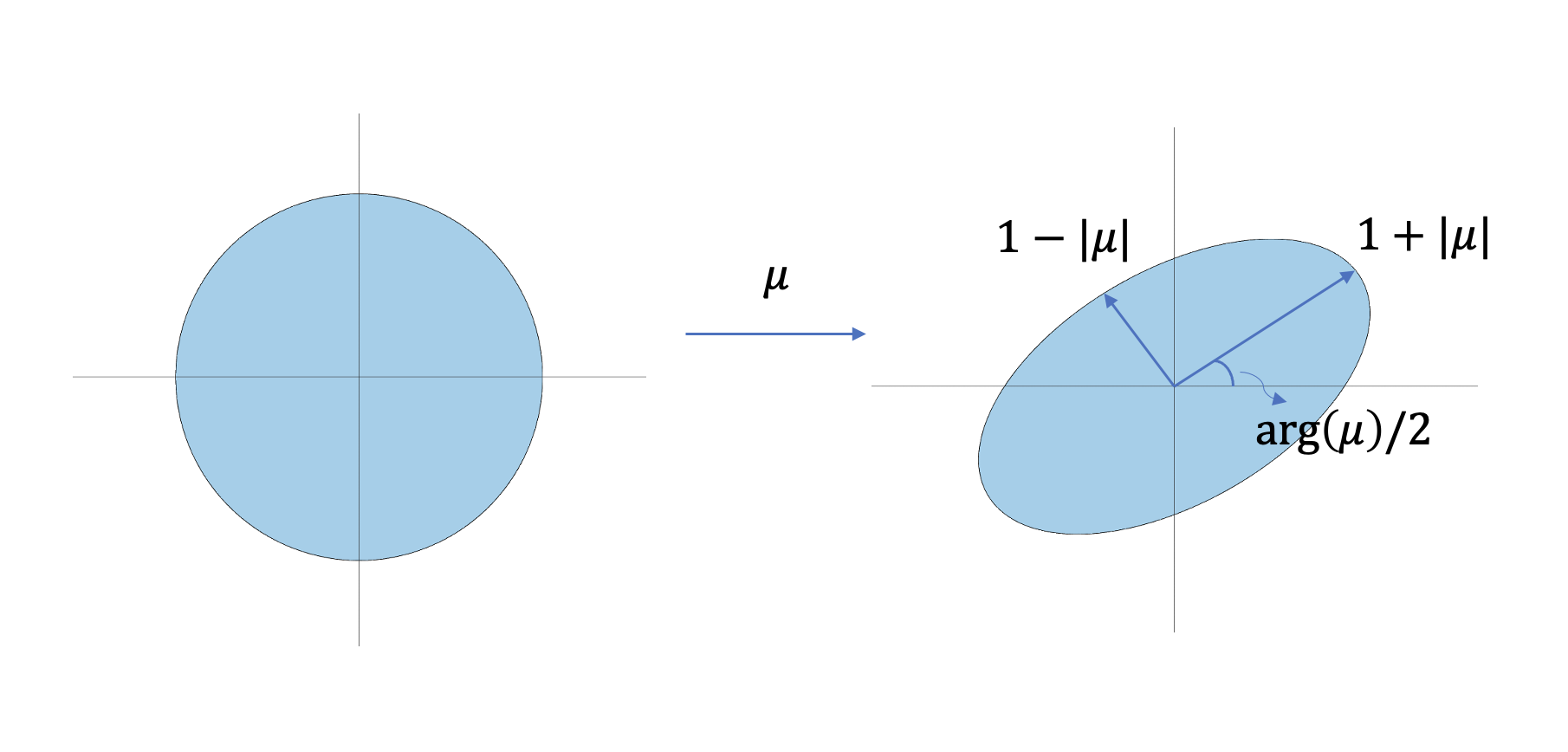}
\caption{An illustration of how the Beltrami coefficient determines the conformality distortion at the infinitesimal level, i.e. the differential map at a point associated with Beltrami coefficient $\mu$.}
\label{fig:distortion_inf}
\end{figure} 

The following theorem by Ahlfors and Lars, called the \emph{Measurable Riemann Mapping Theorem}~\cite{ahlfors1960riemann}, is a generalization of the Riemann Mapping Theorem for conformal maps to the case of quasi-conformal maps.
\begin{theorem} (Measurable Riemann Mapping Theorem)  Suppose $\mu:\mathbb{C} \to \mathbb{C}$ is Lebesgue measurable and satisfies $\|\mu\|_{\infty} < 1$. Then there is a quasi-conformal homeomorphism $\phi$ from $\mathbb{C}$ onto itself, which is in the Sobolev space $W^{1,2}(\mathbb{C})$ and satisfies the Beltrami equation~\eqref{eq:1} in the distribution sense. Furthermore, by fixing 0, 1, and $\infty$, the associated quasi-conformal homeomorphism $\phi$ is uniquely determined.
\end{theorem}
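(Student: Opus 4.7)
The plan is to convert the Beltrami PDE into a fixed-point equation for a singular integral operator, using the Cauchy transform $\mathcal{P}$ (a right inverse of $\partial_{\bar z}$) and the Beurling transform $\mathcal{T}$ (which satisfies $\partial_z \mathcal{P} h = \mathcal{T} h$ for nice $h$). As a preliminary reduction I would first assume $\mu$ has compact support, say in the unit disk $\mathbb{D}$; the general case then follows by splitting $\mu = \mu\chi_{\mathbb{D}} + \mu\chi_{\mathbb{C}\setminus\mathbb{D}}$, solving the two compactly supported problems (the second after the coordinate change $w = 1/\bar z$, which conjugates the Beltrami equation), and composing the resulting maps.

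With $\mu$ compactly supported, I would use the ansatz $\phi(z) = z + \mathcal{P}h(z)$ for an unknown $h \in L^p(\mathbb{C})$ of compact support, so that $\partial_{\bar z}\phi = h$ and $\partial_z \phi = 1 + \mathcal{T}h$ in the distributional sense. Substituting into \eqref{eq:1} reduces the PDE to the linear integral equation
\begin{equation*}
(I - \mu \mathcal{T})h = \mu.
\end{equation*}
Since $\mathcal{T}$ is a Calder\'on--Zygmund operator that is an isometry on $L^2(\mathbb{C})$ and satisfies $\|\mathcal{T}\|_{L^p \to L^p} \to 1$ as $p \to 2$ by Riesz--Thorin interpolation, I can choose $p$ close enough to $2$ so that $\|\mu\|_{\infty} \cdot \|\mathcal{T}\|_{L^p \to L^p} < 1$. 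Then $I - \mu\mathcal{T}$ is invertible on $L^p$ by a Neumann series, producing a unique $h \in L^p \cap L^2$ and hence a candidate $\phi \in W^{1,p}_{\mathrm{loc}}$, which in particular lies in $W^{1,2}_{\mathrm{loc}}$.

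To verify that $\phi$ is a homeomorphism, I would approximate $\mu$ by smooth compactly supported $\mu_n$ with $\|\mu_n\|_{\infty} \le \|\mu\|_{\infty}$, apply classical regularity theory to obtain smooth diffeomorphisms $\phi_n$ solving the corresponding equations, and pass to the limit using the uniform $L^p$ bound on the associated $h_n$ together with a Hurwitz-type principle guaranteeing that nonconstant quasi-conformal limits remain injective. For uniqueness with the $0, 1, \infty$ normalization, if $\phi_1, \phi_2$ are two solutions then $\phi_2 \circ \phi_1^{-1}$ has Beltrami coefficient zero almost everywhere by the composition rule for Beltrami coefficients \cite{gardiner2000quasiconformal}; hence it is a M\"obius automorphism of $\widehat{\mathbb{C}}$ fixing $0, 1, \infty$, and therefore the identity.

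The main obstacle I anticipate is the sharp operator-norm estimate $\|\mathcal{T}\|_{L^p \to L^p} \to 1$ as $p \to 2$, on which the whole solvability argument hinges; proving this requires genuine Calder\'on--Zygmund theory well beyond the easy $L^2$ isometry. A secondary difficulty is promoting the distributional $L^p$ solution to a bona fide quasi-conformal homeomorphism with the required regularity --- in particular, establishing absolute continuity on lines and non-vanishing Jacobian --- before the topological arguments for bijectivity can even begin. Once these analytic ingredients are in place, the decomposition, approximation, and uniqueness steps are largely formal.
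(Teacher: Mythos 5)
The paper offers no proof of this statement: it is quoted as the classical Ahlfors--Bers result with a citation to the literature, so there is no internal argument to compare yours against. Your proposal is the standard textbook proof (Cauchy transform $\mathcal{P}$, Beurling transform $\mathcal{T}$, Neumann series for $(I-\mu\mathcal{T})^{-1}$ on $L^p$ with $p$ near $2$, smooth approximation, and uniqueness via the composition formula \eqref{eq:3}), and the outline is essentially correct. Two points deserve sharpening. First, it is not enough to take $p$ merely ``close to $2$'': you need $p>2$ specifically, because the Hölder continuity of $\mathcal{P}h$ for compactly supported $h\in L^p$, with exponent $1-2/p$, is what makes $\phi=z+\mathcal{P}h$ a continuous map at all and what gives the equicontinuity needed to pass to the limit from the smooth approximants $\phi_n$; the $L^2$ theory alone does not produce a continuous candidate. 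The availability of such a $p>2$ with $\|\mu\|_\infty\|\mathcal{T}\|_{L^p\to L^p}<1$ is exactly the sharp Calder\'on--Zygmund input you correctly flag as the main obstacle. Second, the ansatz $\phi=z+\mathcal{P}h$ produces the solution normalized by $\phi(z)=z+O(1/z)$ at infinity, not the one fixing $0$, $1$, $\infty$; you must post-compose with the affine map sending $\phi(0),\phi(1)$ to $0,1$ (which preserves the Beltrami coefficient) before invoking your uniqueness argument, which is otherwise correct: the composition rule forces $\phi_2\circ\phi_1^{-1}$ to be a conformal automorphism of the sphere fixing three points, hence the identity. Also note that the theorem as stated in the paper asserts $\phi\in W^{1,2}(\mathbb{C})$, whereas your construction (correctly) yields only $W^{1,2}_{\mathrm{loc}}$ regularity; the global statement should be read as local, since even the identity fails to lie in $W^{1,2}(\mathbb{C})$.
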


Conversely, given an orientation-preserving homeomorphism $\phi$, we can compute its Beltrami coefficient $\mu_{f}$ using the Beltrami equation~\eqref{eq:1}:
\begin{equation}
    \mu_{\phi}(z) = \dfrac{\partial \phi}{\partial \bar{z}}/\frac{\partial \phi}{\partial z}.
\end{equation}
This gives the following relation between the Jacobian $J_{\phi}$ and the Beltrami coefficient $\mu_{\phi}$:
\begin{equation}
    J_{\phi}(z) = \abs{\dfrac{\partial \phi}{\partial z}}^2\bigg(1 - \mu_{\phi}(z)\bigg)^2.
    \label{eq:2}
\end{equation}
Note that $J_{\phi}(z)>0$ everywhere as $\phi$ is an orientation-preserving homeomorphism, and hence we must have $\abs{\mu_{\phi}(z)} < 1$ for all $z$. By the measurable Riemann mapping theorem and the above observation, we conclude that there is a one-one correspondence between quasi-conformal homeomorphisms and Beltrami coefficients strictly less than 1. 

Moreover, we have the following composition formula for the Beltrami coefficient of a composition of two quasi-conformal maps. Suppose $f,g:\mathbb{C}\to \mathbb{C}$ are quasi-conformal maps with Beltrami coefficients $\mu_{f}$ and $\mu_{g}$ respectively. Then, the Beltrami coefficient of $g\circ f$ is
\begin{equation}
    \mu_{g\circ f} = \dfrac{\mu_{f} + (\mu_{g}\circ f)\tau}{1+\bar{\mu}_{f}(\mu_{g}\circ f)\tau},\ \ \tau = \frac{\bar{f}_z}{f_z}. \label{eq:3}
\end{equation}
In particular, if $g$ is a conformal map, we have $\mu_{g\circ f} = \mu_{f}$. In other words, given a quasi-conformal map $f$ with Beltrami coefficient $\mu$, the Beltrami coefficient of $g\circ f$ is always $\mu$ for any conformal map $g$. This observation plays an important role in our proposed algorithm for multiply-connected quasi-conformal parameterization. 

The following theorem relates the regularity of a quasi-conformal map with its Beltrami coefficient~\cite{astala2008elliptic}:
\begin{theorem} 
Suppose $f \in W_{loc}^{1,2}(\mathbb{C},\mathbb{C})$ is the solution to the Beltrami equation~\eqref{eq:1}, where the Beltrami coefficient $\mu(z) \in C_{loc}^{l,\alpha}(\mathbb{C},\mathbb{C})$, $\norm{\mu}_{\infty}<1$. Then, $f\in C_{loc}^{l+1,\alpha}(\mathbb{C},\mathbb{C})$. 
\end{theorem}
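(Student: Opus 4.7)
The plan is to prove the theorem by recasting the Beltrami equation as a singular integral equation, inverting it using the $L^p$-theory of the Beurling--Ahlfors transform, and then bootstrapping in Hölder scales. First I would localize: since $C^{l,\alpha}_{loc}$ and $W^{1,2}_{loc}$ are local statements, multiplying $\mu$ by a smooth cut-off supported in a fixed disk $D$ reduces matters to the case where $\mu$ has compact support. I would also pass to the principal solution by replacing $f$ with $f-z$ (up to a globally holomorphic term), since adding a holomorphic function changes neither $\mu_f$ nor the regularity to be proved.

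Let $\mathcal{C}$ denote the Cauchy transform, which is a left inverse of $\partial_{\bar z}$, and let $\mathcal{S}=\partial_z \mathcal{C}$ denote the Beurling--Ahlfors transform. For the normalized solution, write $\omega:=f_{\bar z}\in L^2$; then $f=z+\mathcal{C}\omega$ and consequently $f_z = 1 + \mathcal{S}\omega$. Substituting into the Beltrami equation yields the fixed-point equation
\begin{equation*}
(I-\mu\mathcal{S})\,\omega = \mu.
\end{equation*}
Because $\mathcal{S}$ is a Calderón--Zygmund operator whose $L^p$ operator norm tends to $1$ as $p\to 2$, and because $\|\mu\|_\infty<1$, the operator $I-\mu\mathcal{S}$ is invertible on $L^p$ for $p$ in an open interval around $2$, so $\omega\in L^p_{loc}$ for some $p>2$. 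The Cauchy-transform mapping property $\mathcal{C}:L^p\to C^{0,1-2/p}$ then recovers classical Hölder continuity of $f$, which corresponds to the base case $l=0$, $\alpha$ small.

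The actual regularity upgrade rests on the fact that $\mathcal{S}$ preserves local Hölder classes: $\mathcal{S}:C^{k,\alpha}_c\to C^{k,\alpha}_{loc}$ is bounded for every integer $k\ge 0$ and every $\alpha\in(0,1)$. Granting this, a Neumann-series (or Banach contraction) argument, using $\|\mu\|_\infty<1$ and the fact that multiplication by a $C^{k,\alpha}$ function is bounded on $C^{k,\alpha}$, shows that $(I-\mu\mathcal{S})^{-1}$ is bounded on $C^{k,\alpha}_{loc}$. Applied with $k=l$, this yields $\omega\in C^{l,\alpha}_{loc}$, whence $f_z=1+\mathcal{S}\omega$ and $f_{\bar z}=\omega$ both lie in $C^{l,\alpha}_{loc}$, giving $f\in C^{l+1,\alpha}_{loc}$. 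An induction on $l$, in which one differentiates the Beltrami equation to see that $f_z$ itself satisfies a Beltrami-type equation with a $C^{l-1,\alpha}$ inhomogeneity, formalizes the bootstrap.

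The main obstacle is Step 3, namely the Hölder continuity of $\mathcal{S}$ on $C^{k,\alpha}$. The kernel $-1/(\pi(z-\zeta)^2)$ is only a principal-value kernel, so the Hölder estimate requires the standard Privalov--Plemelj-type cancellation argument (splitting near and far parts of the kernel and exploiting the Hölder modulus of $\omega$ to tame the singularity). Once this analytic input is in place, the remaining steps--invertibility of $I-\mu\mathcal{S}$, the integral representation, and the inductive differentiation of the Beltrami equation--are routine, and the conclusion $f\in C^{l+1,\alpha}_{loc}$ follows.
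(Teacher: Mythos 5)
The paper does not prove this theorem; it is quoted verbatim from the reference \cite{astala2008elliptic} (Astala--Iwaniec--Martin) as background, so there is no internal proof to compare against. Judged on its own merits, your sketch follows the standard singular-integral route and most of its ingredients are correct (the representation $f=z+\mathcal{C}\omega$ for the principal solution, the equation $(I-\mu\mathcal{S})\omega=\mu$, invertibility on $L^p$ for $p$ near $2$ via $\|\mathcal{S}\|_{L^p\to L^p}\to 1$, and the fact that $\mathcal{S}$ preserves $C^{k,\alpha}$ classes). But two steps are genuinely flawed. First, your normalization: adding a holomorphic function to $f$ does \emph{not} preserve $\mu_f$, since it changes $f_z$ but not $f_{\bar z}$; the correct reduction to the principal solution is by Stoilow factorization, $f=h\circ f_0$ with $h$ holomorphic (post-composition, not addition), after which the regularity of $f$ is inherited from $f_0$. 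This is fixable but as written the reduction is wrong.

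The more serious gap is the Neumann-series argument for inverting $I-\mu\mathcal{S}$ on $C^{k,\alpha}$. Convergence of $\sum(\mu\mathcal{S})^n$ requires the operator norm of $\mu\mathcal{S}$ on $C^{k,\alpha}$ to be less than $1$, and that norm is controlled by the product of $\|\mathcal{S}\|_{C^{k,\alpha}\to C^{k,\alpha}}$ (a constant strictly larger than $1$, blowing up as $\alpha\to 0$ or $1$) and the norm of multiplication by $\mu$ on $C^{k,\alpha}$, which involves the full $C^{k,\alpha}$ norm of $\mu$, not merely $\|\mu\|_\infty$. So $\|\mu\|_\infty<1$ gives you nothing here, and the series need not converge. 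Closing this gap requires an additional idea: either a Fredholm/compactness argument (showing $I-\mu\mathcal{S}$ is a compact perturbation of an invertible operator on $C^{k,\alpha}$ and using injectivity from the $L^p$ theory), or a coefficient-freezing Schauder iteration in which one writes $f_{\bar z}-\mu(z_0)f_z=(\mu-\mu(z_0))f_z$ and exploits the smallness of $|\mu(z)-\mu(z_0)|\le[\mu]_\alpha|z-z_0|^\alpha$ on small balls. Alternatively, and more in the spirit of this paper, one can pass to the divergence-form system $\nabla\cdot(A\nabla u)=0$ of Section 3.2, note that $A\in C^{l,\alpha}_{loc}$ is uniformly elliptic when $\|\mu\|_\infty<1$, and invoke interior Schauder estimates to conclude $u,v\in C^{l+1,\alpha}_{loc}$ directly, bypassing the Beurling transform on H\"older spaces altogether.
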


For quasi-conformal maps of Riemann surfaces, one can generalize the concept of Beltrami coefficients to Beltrami differentials via the local charts of the surfaces. More specifically, the \emph{Beltrami differential} $\mu(z)\dfrac{d\bar{z}}{dz}$ on a Riemann surface $S$ is an assignment to each chart $(U_{\alpha},\phi_{\alpha})$ of an $L_{\infty}$ complex-valued function $\mu_{\alpha}$, defined on local parameters $z_{\alpha}$, such that 
\begin{equation}
    \mu_{\alpha}(z_{\alpha})\dfrac{d\bar{z}_{\alpha}}{dz_{\alpha}} = \mu_{\beta}(z_{\beta})\dfrac{d\bar{z}_{\beta}}{dz_{\beta}} \label{eq:4}
\end{equation}
on the domain which is also covered by another chart $(U_{\beta},\phi_{\beta})$, where $\dfrac{dz_{\beta}}{dz_{\alpha}} = \dfrac{d}{dz_{\alpha}}\phi_{\alpha\beta}$ and $\phi_{\alpha\beta} = \phi_{\beta}\circ \phi_{\alpha}^{-1}$. In particular, if a surface can be covered by a single chart, we can use the Beltrami coefficient defined on that chart to represent the Beltrami differential of the surface. As our work focuses on multiply-connected open surfaces, we can simply find a free boundary conformal map from the given surface onto $\mathbb{C}$ and use that as the global chart to represent the Beltrami differential. Therefore, the Beltrami coefficient and the Beltrami differential are used interchangeably in our method. 

\subsection{Variational formulation of quasi-conformal map}
\label{sec:3.2}
Here we introduce a variational approach called the least-squares quasi-conformal map (LSQC), developed by Qiu \textit{et al.}~\cite{qiu2019computing}, for solving the Beltrami equation~\eqref{eq:1} to get free-boundary quasi-conformal maps. The formulation is an analog of the DNCP/LSCM formulation~\cite{desbrun2002intrinsic,levy2002least} for free-boundary conformal maps of 2D domains. Suppose $f: \Omega  \subset \mathbb{C} \to \Omega'  \subset \mathbb{C}$ is a quasi-conformal map. We write $f = u + iv$ and $\mu_{f} = \rho + i\tau$, where $u$, $v$, $\rho$ and $\tau$ are real-valued functions. Also, let 
\begin{equation}
A = \frac{1}{1-\abs{\mu}^2}
\begin{pmatrix}
(\rho-1)^2 + \tau^2 & -2\tau \\
-2\tau & (1+\rho)^2 + \tau^2
\end{pmatrix}.
\label{eqt:A}
\end{equation}
From the above, we can transform the Beltrami equation~\eqref{eq:1} into
\begin{equation}
    \begin{pmatrix}u_x \\ u_y \end{pmatrix} =
    \begin{pmatrix} 0  &  1 \\ -1  &  0 \end{pmatrix} A
    \begin{pmatrix} v_x \\ v_y \end{pmatrix}.
\end{equation}
Then, using the relation $u_{xy} = u_{yx}$, we obtain the following equation
\begin{equation}
    \nabla \cdot (A\nabla v(z)) = 0.  \label{eq:5}
\end{equation}
Similarly, we can express $v_x, v_y$ in terms of $u_x, u_y$ and get
\begin{equation}
    \nabla \cdot (A\nabla u(z)) = 0.  \label{eq:6}
\end{equation}
It can be observed that the two equations above are the Euler--Lagrange equations of the following two Dirichlet type energies respectively:
\begin{equation}
    E_{A}(u)=\dfrac{1}{2}\int_{\Omega}\norm{A^{1/2}\nabla u}^2 dxdy,\ \ \ 
    E_{A}(v)=\dfrac{1}{2}\int_{\Omega}\norm{A^{1/2}\nabla v}^2 dxdy.  \label{eq:7}
\end{equation}
Note that Equations \eqref{eq:5} and \eqref{eq:6} are necessary conditions of $u$ and $v$ derived from the Beltrami equation~\eqref{eq:1}. One can also define the following \emph{least-squares quasi-conformal energy} using the Beltrami equation~\eqref{eq:1} directly:
\begin{equation}
    E_{QC}^{\mu}(u,v) = \dfrac{1}{2}\int_{\Omega}\norm{P\nabla u + JP\nabla v}^2 dx dy,  \label{eq:8}
\end{equation}
where
\begin{equation}
    P = \dfrac{1}{\sqrt{1-\abs{\mu}^2}}
    \begin{pmatrix} 1-\rho & -\tau \\ -\tau & 1+\rho \end{pmatrix} \ \ \text{ and } \ \ 
    J = \begin{pmatrix} 0 & -1 \\ 1 & 0 \end{pmatrix}.
\end{equation}
Since $P^{T}P = A$, the Beltrami equation~\eqref{eq:1} holds if and only if $E_{QC}^{\mu}(u,v) = 0$.
The following equation relates the three energies $E_{A}(u)$, $E_{A}(v)$ and $E_{QC}^{\mu}(u,v)$:
\begin{equation}\label{eq:9}
    E_{A}(u) + E_{A}(v) - E_{QC}^{\mu}(u,v) = \mathcal{A}(u,v) = \int_{\Omega}(u_x v_y - v_x u_y)dx dy.
\end{equation}
Since $f$ is an orientation-preserving homeomorphism, $\mathcal{A}(u,v)$ is the area of $\Omega' = f(\Omega)$. For this reason, $\mathcal{A}(u,v)$ is called the \emph{area functional}. Now, since $E_{QC}^{\mu}(u,v)$ is always positive, we have the following inequality:
\begin{equation}
    E_{A}(u) + E_{A}(v) \geq \mathcal{A}(u,v). \label{eq:10}
\end{equation}
The equality holds if and only if $E_{QC}^{\mu}(u,v) = 0$, i.e. the Beltrami coefficient of $f = u + iv$ is equal to $\mu$.

Later on, we will see that $E_{A}(u), E_{A}(v)$ and $\mathcal{A}(u,v)$ can all be efficiently computed numerically, which allows us to compute \emph{free-boundary quasi-conformal maps} efficiently. 

On the other hand, if we want to compute a map from a domain to some specific domain such as a disk or a rectangle, we need to specify the boundary conditions. Suppose we want to compute a conformal map $f$ from a simply-connected domain $\Omega$ to the unit disk $\mathbb{D}$. By the measurable Riemann mapping theorem, $f$ is unique up to a M\"obius transformation. Therefore, the boundary condition $F(\partial\Omega)$ should be carefully set; otherwise, such a quasi-conformal map may not exist. To get the admissible boundary condition, we are going to use the geodesic algorithm developed by Marshall~\cite{marshall2007convergence}, which will be introduced later in this paper. By the elliptic PDE theory, with the admissible boundary condition, Equations~\eqref{eq:5} and \eqref{eq:6} have a unique solution and yield the desired quasi-conformal map.

Finally, we remark that since conformal maps are a special case of quasi-conformal maps with $\mu \equiv 0$, and the above results also hold for conformal maps and are consistent with the results in the conformal mapping literature~\cite{hutchinson1991computing,pinkall1993computing}.

\subsection{Conformal welding}
\label{sec:3.3}
There are several equivalent ways to describe the conformal welding problem. Here, we adopt the version in~\cite{marshall2012conformal,sharon20062d}. Let $\bar{\mathbb{C}} := \mathbb{C} \cup \{\infty\}$ denote the extended complex plane and $\mathbb{D}$ denote the unit disk $\{z \in \mathbb{C}:\abs{z} \leq 1\}$. Given an increasing homeomorphism $h$ of $\partial \mathbb{D}$, the conformal welding problem is to find a Jordan curve $J$ and two conformal maps $f,g$ such that $f$ and $g$ map $\mathbb{D}$ and $\bar{\mathbb{C}} \backslash \textup{int}(\mathbb{D})$ to $J \cup \textup{int}(J)$ and $J \cup \textup{ext}(J)$ respectively, where  $\textup{int}(J)$ and $\textup{ext}(J)$ are the interior and exterior of $J$ respectively, and $f(h(x)) = g(x)$ on $\partial \mathbb{D}$. Since there exists a conformal map between the unit disk and the upper half plane, we can also formulate the problem for the upper and lower half planes with an increasing homeomorphism on the real axis.

The conformal welding problem may not have a solution for a general homeomorphism $h$. However, the existence of conformal welding can be proved if $h$ satisfies certain conditions. Here, we introduce the notion of quasi-symmetric functions. Let $h$ be a continuous, strictly increasing function defined on an interval $I$ of the x-axis. We call $h$ \emph{k-quasi-symmetric} on $I$~\cite{lehto1973quasiconformal} if there exists a positive constant $k$ such that 
\begin{equation}
    \dfrac{1}{k} \leq \dfrac{h(x+t) - h(x)}{h(x) - h(x-t)} \leq k,
\end{equation}
for all $x,x-t \in I$ with $t > 0$.

The following theorem shows the solvability of the conformal welding problem when $h$ is a quasi-symmetric homeomorphism of the real axis. 
\begin{theorem} (Sewing theorem~\cite{lehto1973quasiconformal}).
Let $h$ be a quasi-symmetric function on the real axis. Then the upper and lower half-planes can be mapped conformally onto disjoint Jordan domains $D,\Omega$ by two maps $\phi,\phi^*$, such that $\phi(x) = \phi^*(h(x))$ for all $x\in \mathbb{R}$.
\label{thm:sewing}
\end{theorem}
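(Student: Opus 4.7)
The plan is to reduce the welding problem to a single application of the Measurable Riemann Mapping Theorem, using quasi-symmetry only to secure a quasi-conformal extension of $h$ into a half-plane. The overall strategy is to manufacture a global quasi-conformal homeomorphism $\Phi$ of $\mathbb{C}$ whose Beltrami coefficient vanishes on the upper half-plane $\mathbb{H}^{+}$ and agrees with that of an extension of $h$ on the lower half-plane $\mathbb{H}^{-}$; the two welding maps $\phi,\phi^{*}$ then read off directly from $\Phi$, one as a restriction and one as a precomposition.

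The first step is to produce a quasi-conformal self-homeomorphism $H:\overline{\mathbb{H}^{-}}\to\overline{\mathbb{H}^{-}}$ whose boundary values on $\mathbb{R}$ coincide with $h$. This is exactly where the quasi-symmetry hypothesis is used: the Beurling--Ahlfors extension promotes any $k$-quasi-symmetric increasing homeomorphism of $\mathbb{R}$ to a quasi-conformal self-map $H$ of $\mathbb{H}^{-}$, with Beltrami coefficient $\mu_{H}$ satisfying $\|\mu_{H}\|_{\infty}<1$. I would then define a measurable $\mu:\mathbb{C}\to\mathbb{C}$ by $\mu\equiv 0$ on $\mathbb{H}^{+}$ and $\mu=\mu_{H}$ on $\mathbb{H}^{-}$; since $\|\mu\|_{\infty}<1$, the Measurable Riemann Mapping Theorem provides a normalized quasi-conformal homeomorphism $\Phi:\mathbb{C}\to\mathbb{C}$ with $\mu_{\Phi}=\mu$.

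Next, I would extract the welding data from $\Phi$. Set $\phi:=\Phi|_{\mathbb{H}^{+}}$, which is conformal because $\mu_{\Phi}=0$ there, and set $\phi^{*}:=\Phi\circ H^{-1}|_{\mathbb{H}^{-}}$. Conformality of $\phi^{*}$ follows from the composition formula~\eqref{eq:3}: writing $\Phi=\phi^{*}\circ H$ on $\mathbb{H}^{-}$ with $\tau=\bar{H}_{z}/H_{z}$, the formula yields
\begin{equation*}
\mu_{H} \;=\; \mu_{\Phi} \;=\; \frac{\mu_{H}+(\mu_{\phi^{*}}\circ H)\tau}{1+\bar{\mu}_{H}(\mu_{\phi^{*}}\circ H)\tau},
\end{equation*}
which, after clearing denominators and using $|\mu_{H}|<1$ a.e., forces $\mu_{\phi^{*}}\equiv 0$. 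Define $D:=\Phi(\mathbb{H}^{+})$ and $\Omega:=\Phi(\mathbb{H}^{-})$; these are disjoint Jordan domains because $\Phi$ is a homeomorphism of $\bar{\mathbb{C}}$ sending the Jordan curve $\mathbb{R}\cup\{\infty\}$ to a Jordan curve. The welding identity is then immediate: for $x\in\mathbb{R}$, the relation $H|_{\mathbb{R}}=h$ gives $H^{-1}(h(x))=x$, hence $\phi^{*}(h(x))=\Phi(H^{-1}(h(x)))=\Phi(x)=\phi(x)$.

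The main obstacle, and the only place where the hypothesis on $h$ actually enters, is the first step: establishing the quasi-conformal extension $H$ of a quasi-symmetric $h$. The underlying computation for the Beurling--Ahlfors construction defines $H(x+iy)$ as an average of $h$ over an interval of length comparable to $|y|$, and the quasi-symmetry inequality is exactly what is needed to bound the resulting dilatation away from $1$ and to keep the map orientation-preserving. Once this ingredient is in hand, the rest of the proof is essentially algebraic: the MRMT delivers $\Phi$, the composition formula certifies conformality of $\phi^{*}$, and the welding identity holds by construction. A minor remaining point is the continuous extension of $\Phi$ to $\mathbb{R}$ so that the boundary identity makes sense pointwise, which is standard since global quasi-conformal maps are uniformly Hölder continuous on compact sets.
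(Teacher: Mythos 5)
Your argument is correct, and it is essentially the second of the two proof strategies the paper alludes to: the paper states this theorem as cited from Lehto--Virtanen, remarks that their proof proceeds ``by approximation techniques of quasi-symmetric functions,'' and then explicitly notes that solvability ``can also be proved using the existence of solutions to the Beltrami equation as shown by Pfluger.'' Your proposal is precisely that Pfluger-style route: Beurling--Ahlfors gives a quasi-conformal extension $H$ of $h$ into one half-plane, the Measurable Riemann Mapping Theorem produces a global $\Phi$ with $\mu_\Phi=0$ on $\mathbb{H}^{+}$ and $\mu_\Phi=\mu_H$ on $\mathbb{H}^{-}$, and the composition formula~\eqref{eq:3} forces $\mu_{\Phi\circ H^{-1}}\equiv 0$ so that $\phi=\Phi|_{\mathbb{H}^{+}}$ and $\phi^{*}=\Phi\circ H^{-1}$ are conformal with $\phi=\phi^{*}\circ h$ on $\mathbb{R}$; all the individual steps (Weyl's lemma to pass from vanishing Beltrami coefficient to holomorphy, invariance of null sets under quasi-conformal maps to transport $\mu_{\phi^{*}}\circ H=0$ to $\mu_{\phi^{*}}=0$, and the homeomorphic boundary extensions needed for the pointwise welding identity) are standard and correctly flagged. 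The comparison is worth making explicit: the approximation-based proof works directly with quasi-symmetric functions and Koebe-type normal-family arguments, avoiding the existence theory for the Beltrami equation, whereas your route front-loads all the analysis into two black boxes (Beurling--Ahlfors and the MRMT) and reduces the welding itself to an algebraic identity; the latter is shorter and also makes transparent why the welded curve is a quasicircle, but it is only as elementary as the MRMT one is willing to assume.
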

The proof of the above theorem is based on approximation techniques of quasi-symmetric functions. The solvability of the conformal welding problem can also be proved using the existence of solutions to the Beltrami equation as shown by Pfluger~\cite{pfluger1960ueber}.

\subsection{Geodesic algorithm}
\label{sec:3.4}
The Riemann mapping theorem guarantees the existence of a conformal map from a simply-connected open subset of $\mathbb{C}$ to the unit disk, unique up to a M\"{o}bius transformation. However, this theorem does not provide a way to compute such a conformal map explicitly. In the 1980s, K\"{u}hnau~\cite{kuhnau1983numerische}, and Marshall and Morrow~\cite{Marshall1987compositions} independently proposed the \emph{zipper algorithm} for computing conformal maps from a simply-connected open set to the unit disk. Later, Marshall and Rohde~\cite{marshall2007convergence} proved the convergence in different cases for a variant of the zipper algorithm called \emph{geodesic algorithm}. As described by Marshall and Rohde~\cite{marshall2007convergence}, the geodesic algorithm can be viewed as an approximate solution to a conformal welding problem or as a discretization of the Loewner differential equation. The details, variants and convergence of the geodesic algorithm can be found in~\cite{marshall2007convergence}. Below, we briefly introduce the geodesic algorithm.

The key ingredient of the geodesic algorithm is the two-fold map shown in Fig.~\ref{fig:geodesic}, which is a composition of a M\"obius transformation, a square map, and a square root map. In one direction, it maps a hyperbolic geodesic to the real axis. Given $z_1$ on the upper half plane, we denote by the red line $\gamma$ the circular arc from $0$ to $z_1$, which is a hyperbolic geodesic. The map $f_{z_1}$ conformally maps $\gamma$ to $[0,z_3]$ or $[-z_3,0]$ depending on the choice of the branch for the square root map. The rest of the upper half plane $\mathbb{H}\backslash\gamma$ is conformally mapped to $\mathbb{H}$. In the reverse direction, note that two line segments $[-z_3,0]$ and $[0,z_3]$ are both mapped to $[0,z_3^2]$ by a square map, and eventually mapped to the curve $\gamma$. Hence, this direction allows us to conformally align two different lines, which can then be used to compute conformal welding.

\begin{figure}[t]
\centering
\includegraphics[width=0.9\linewidth]{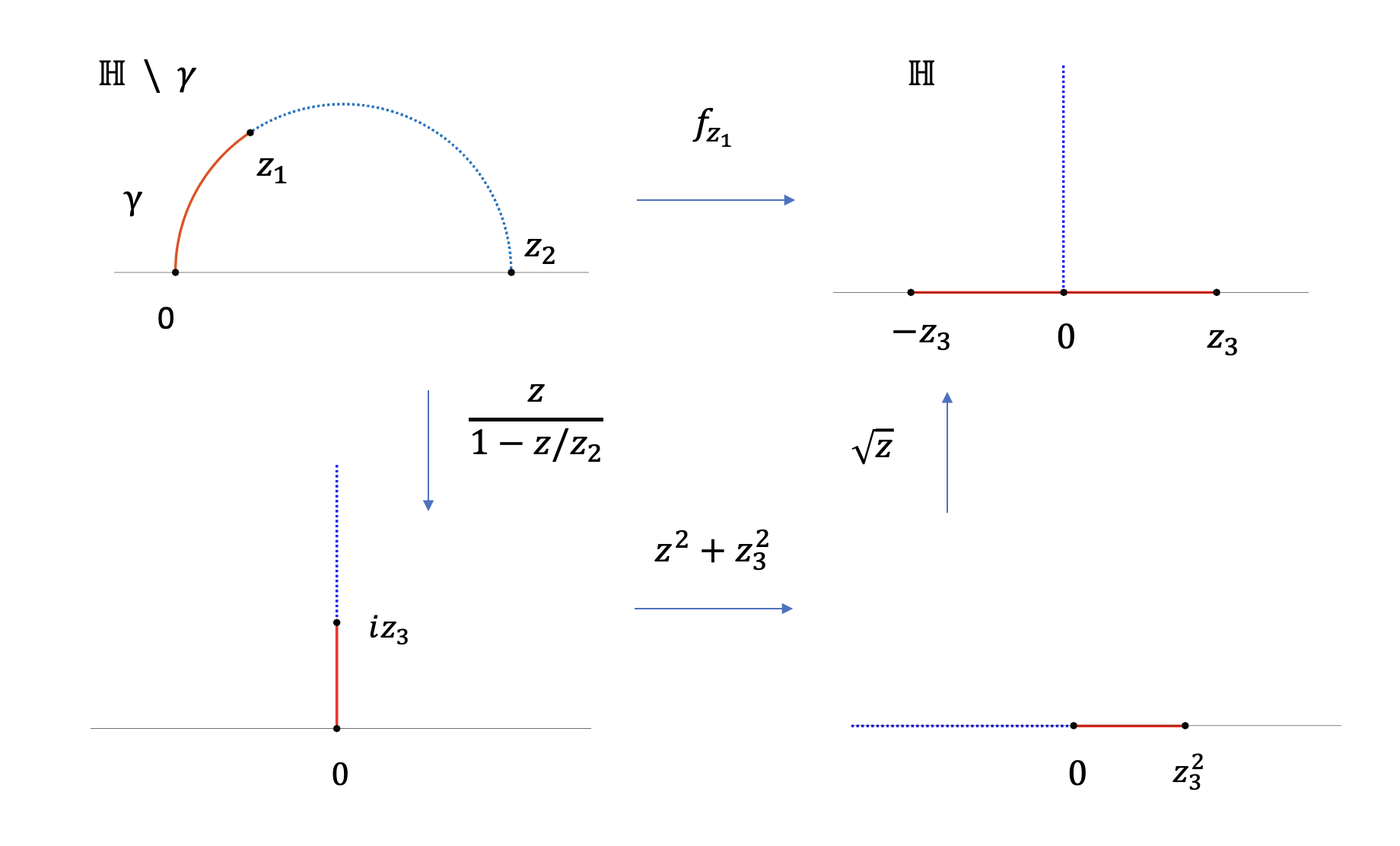}
\caption{The basic conformal map $f_{z_1}$ of the geodesic algorithm in~\cite{marshall2007convergence}.}
\label{fig:geodesic}
\end{figure}

In order to compute a Riemann mapping from some Jordan domain $\Omega$ to $\mathbb{H}$ by the geodesic algorithm, we only need a sequence of boundary points $\{z_0,z_1,\dots,z_n\}$ of $\partial \Omega$ that are sufficiently dense on $\partial \Omega$. The starting map is given by 
 \begin{equation}
     g_1(z) = i\sqrt{\dfrac{z-z_1}{z-z_0}},
 \end{equation}
with $g_1(z_1) = 0$ and $g_1(z_0) = \infty$. Let $\xi_2 = g_1(z_2)$ and $g_2 = f_{\xi_2}$, where $f_{\xi_2}$ is the map defined in Fig.~\ref{fig:geodesic}. We repeat this process for all the boundary points to get
\begin{equation}
    \xi_k = g_{k-1}\circ g_{k-2}\circ\cdots\circ g_1(z_k)
\end{equation}
and 
\begin{equation}
    g_k = f_{\xi_k}
\end{equation}
for $k = 2,\dots,n$. We then compute a final map by defining
\begin{equation}
    \xi_{n+1} = g_n\circ\cdots\circ g_1(z_0)\in\mathbb{R}
\end{equation}
and
\begin{equation}
    g_{n+1} = \pm\bigg(\dfrac{z}{1-z/\xi_{n+1}}\bigg)^2,
\end{equation}
where the positive sign is chosen when the data points are in anti-clockwise orientation, and the negative sign otherwise. 
The composition mapping
$g = g_{n+1}\circ g_n\circ\cdots g_1$
gives a conformal map from $\Omega$ to $\mathbb{H}$. Although originally invented to be in this form, as indicated by Marshall in~\cite{marshall2007convergence}, the computation of the mapping is more reliable when we perform it on the right half plane instead of the upper half plane due to the default choice of branching in scientific computing software. In our algorithm, we perform all the computation on the right half plane. 

The convergence of the geodesic algorithm was proved in~\cite{marshall2007convergence}. In particular, under different assumptions on the regularity of the region $\Omega$, different convergence results can be established.

\subsection{Riemann mapping theorem for multiply-connected domains}
\label{sec:3.5}
While the Riemann mapping theorem focuses on the conformal equivalence between any simply-connected region in the complex plane and the open unit disk, there is also a generalization of this result to multiply-connected domains. Here, we present a result given in Chapter 17 of~\cite{henrici1993applied}, which shows that any region $R$ of connectivity $n \geq 2$ can be conformally mapped to the complement of $n$ closed circular disks. Such a region is called a \emph{circular region of connectivity $n$}.
\begin{theorem}
Let $R$ be a region of connectivity $n\geq 2$ in the extended complex plane with $\infty \in R$. Then, there exists a unique circular region of connectivity $n$ and a unique one-to-one analytic function $f$ satisfying $f(z) = z + O(\dfrac{1}{z})$ such that $f(R)=C$.
\end{theorem}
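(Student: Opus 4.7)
My plan is to prove existence by a Koebe-type iteration — the same iteration that the paper later exploits algorithmically — and to deduce uniqueness from a Schwarz reflection argument. Label the boundary components of $R$ as $\Gamma_1,\dots,\Gamma_n$, and let $D_1,\dots,D_n \subset \bar{\mathbb{C}}\setminus R$ be the bounded simply-connected complementary components with $\partial D_k = \Gamma_k$.

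For existence, set $f_0 = \mathrm{id}$, $R_0 = R$, with complementary components $D_k^{(0)} = D_k$. At step $j \ge 0$, pick $k \equiv j \pmod{n}$, apply the ordinary Riemann mapping theorem to the simply-connected region $\bar{\mathbb{C}}\setminus D_k^{(j)}$ to produce a conformal map onto the exterior of a round disk, and post-compose with the unique Möbius transformation that fixes $\infty$ and yields the normalization $\phi_j(z) = z + O(1/z)$. Set $f_{j+1} = \phi_j \circ f_j$ and $R_{j+1} = \phi_j(R_j)$, with new complementary components $D_\ell^{(j+1)} = \phi_j(D_\ell^{(j)})$ for $\ell = 1,\dots,n$. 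After step $j$ the boundary $\partial D_k^{(j+1)}$ is a round circle, although the other $n-1$ boundaries are generally perturbed away from the circles they previously approximated.

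The hardest step will be proving convergence of $\{f_j\}$. I would introduce a quantity $\varepsilon_j$ measuring the maximal deviation of the boundary components of $R_j$ from circles — for example, the supremum over $k$ of $\log(r_k^{\mathrm{out}}/r_k^{\mathrm{in}})$, where $r_k^{\mathrm{out}}, r_k^{\mathrm{in}}$ are the outer and inner radii of the smallest annulus containing $\Gamma_k^{(j)}$ centred at its centroid — and aim for a submultiplicative estimate $\varepsilon_{j+n} \le \lambda\,\varepsilon_j$ with some $\lambda \in (0,1)$ after one full cycle through the boundaries. The mechanism is that the normalization $\phi_j(z) = z + O(1/z)$ combined with an area-theorem bound forces $\phi_j - \mathrm{id}$ to be small on compacta away from $\Gamma_k^{(j)}$ whenever $\Gamma_k^{(j)}$ is already close to circular, so later corrections do not destroy the near-circularity previously achieved. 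The family $\{f_j\}$ is locally uniformly bounded on $R$ by the hydrodynamic normalization, so Montel's theorem supplies a locally uniform convergent subsequence, Hurwitz's theorem preserves univalence in the limit, and the submultiplicative estimate promotes every subsequential limit to the same conformal map $f$ of $R$ onto a circular domain $C$ with $f(z) = z + O(1/z)$.

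For uniqueness, let $f,\tilde f$ be two such maps with images $C,\tilde C$, and consider $h = \tilde f \circ f^{-1}: C \to \tilde C$, which is conformal with $h(z) = z + O(1/z)$ at $\infty$. Schwarz reflection across each pair of corresponding boundary circles extends $h$ analytically past every component of $\partial C$; repeated reflection extends $h$ to a single-valued meromorphic function on the complement in $\bar{\mathbb{C}}$ of the limit set of the associated Schottky reflection group, a Cantor-type set of zero area that is removable for bounded analytic functions. The extended map is then a Möbius transformation of $\bar{\mathbb{C}}$ fixing $\infty$ with unit derivative and vanishing $O(1/z)$ term, hence is the identity, yielding $\tilde f = f$ and $\tilde C = C$.
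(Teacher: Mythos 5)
Your existence argument follows the same route as the source the paper defers to (Henrici's constructive proof via the Koebe iteration), and your setup of the iteration is correct. The genuine gap is the convergence step, which you leave as a plan rather than a proof, and the mechanism you sketch does not launch the iteration: you argue that the hydrodynamic normalization plus an area-theorem bound forces $\phi_j-\mathrm{id}$ to be small away from $\Gamma_k^{(j)}$ \emph{whenever $\Gamma_k^{(j)}$ is already close to circular}, but at the start of the process no boundary component is close to circular, so as written the contraction argument is circular. What the actual proof establishes is a quantitative statement valid from the first full cycle onward: once a component has been circularized, its distortion under each subsequent map $\phi_j$ is controlled by the deviation from circularity of the component currently being processed, with a per-cycle gain factor $\mu^4<1$ depending only on the mutual separation (the moduli of separating annuli) of the boundary components; this is exactly the content of the estimate $\abs{g_k(w)-w}\leq\gamma\mu^{4[k/n]}$ quoted as Theorem 3.5 in the paper. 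Without an estimate of this type, Montel and Hurwitz give you only a univalent subsequential limit of $\{f_j\}$; they do not show that its image is a circular region, nor that the full sequence converges.

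The uniqueness argument has the right skeleton (repeated Schwarz reflection across corresponding boundary circles), but the removability step is wrong as justified. Zero area does not imply removability for bounded analytic functions: removability is governed by analytic capacity, not Lebesgue measure. The segment $[-1,1]$ has zero area, yet $\sqrt{z^2-1}-z$ is a nonconstant bounded analytic function on its complement. Worse, the limit set of the reflection group generated by $n\geq 3$ disjoint circles can have Hausdorff dimension exceeding $1$, in which case it carries a measure with bounded Cauchy transform, has positive analytic capacity, and is genuinely non-removable for bounded analytic functions. The classical proof sidesteps removability entirely: one checks that the diameters of the generation-$m$ reflected disks and of their images under the extended $h$ tend to $0$ (via $\sum\operatorname{diam}^2<\infty$ from the area theorem), applies the maximum principle to $h(z)-z$ on the complement $\Omega_m$ of the generation-$m$ disks, and lets $m\to\infty$ to conclude $h=\mathrm{id}$. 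Alternatively one may invoke the \emph{conformal} removability of such limit sets, but that is a different and deeper property than the one you cite, and it still requires an argument.
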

The book \cite{henrici1993applied} gives a constructive proof of this theorem, which was originally due to Koebe and hence called the \emph{Koebe's iteration}~\cite{koebe1910konforme}. We explain the Koebe's iteration in detail here as it is closely related to our proposed algorithm in this paper. Suppose the components of complements of $R$ are $K_1,K_2,\dots,K_n$. Let $R_0 := R, D_{0,i}:=K_i, i = 1,2,\dots,n$. Suppose in the $(k-1)$-th iteration, we have obtained a region $R_{k-1}$ of connectivity $n$, whose complements are $D_{k-1,i},i = 1,2,\dots,n$. Then, in the $k$-th iteration, let $j = k\ \text{mod}\ n,1\leq j \leq n$. We find the unique conformal map $h_k$, normalized at $\infty$, from $R_{k-1}\backslash D_{k-1,j}$ to the exterior of a disk. Let $D_{k,j}$ be that disk, and
\begin{equation}
R_k := h_{k}(R_{k-1}),\ D_{k,i} := h_k(D_{k,i-1}), \ i=1,2,\dots,n, \ i\neq j.
\end{equation}
Clearly, $R_k$ is a region of connectivity $n$ and the components of complements of it are $D_{k,i}, i=1,2,\dots,n$. The Koebe's algorithm goes cyclically on $i=1,2,\dots,n$, each time mapping one boundary component to a circle until the result converges.

Let $f$ denote the desired Riemann mapping, $f_k := h_{k}\circ h_{k-1}\circ \cdots \circ h_1$ and $g_k := f_k \circ f^{-1}$. We have the following estimate of the convergence rate~\cite{henrici1993applied}:
\begin{theorem}
\label{thm:Koebe}
There exists constants $\gamma > 0$ and $0<\mu <1$ such that for $k = 1,2,\dots$ and for all $w\in C$,
\begin{equation}
    \abs{g_{k}(w)-w} \leq \gamma \mu^{4[k/n]}.
\end{equation}
\end{theorem}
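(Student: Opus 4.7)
The plan is to reduce the statement to a per-cycle contraction estimate and then prove this contraction by exploiting the exact circularity of the boundaries of the target $C$ via Schwarz reflection. Write $k = qn + r$ with $0 \le r < n$ and set $\Delta_k := \sup_{w \in C}|g_k(w) - w|$, where $g_k = f_k \circ f^{-1}$ maps $C$ univalently onto $C_k := f_k(R)$ and is normalized at infinity by $g_k(w) = w + O(1/w)$. If I can establish an absolute constant $0 < \mu < 1$ with $\Delta_{k+n} \le \mu^{4}\Delta_k$ for every $k$, then iterating yields $\Delta_{qn} \le \mu^{4q}\Delta_0$, and the stated bound follows after choosing $\gamma$ large enough to absorb $\Delta_0$ together with the factor $\mu^{-4r/n}$ coming from the incomplete cycle of $r$ extra iterations.

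To obtain the per-cycle contraction, I would consider $H := h_{k+n} \circ \cdots \circ h_{k+1}$, so that $g_{k+n} = H \circ g_k$ and $\Psi := g_{k+n} \circ g_k^{-1}\colon C_k \to C_{k+n}$ is univalent and normalized by $\Psi(z) = z + O(1/z)$ at infinity. Over one full cycle each of the $n$ boundary components of $C$ is mapped to a circle by exactly one intermediate $h_j$, so both $C_k$ and $C_{k+n}$ are small perturbations of the same circular template $C$. Because $\partial C$ consists of exact circles, $g_k^{-1}$ and $g_{k+n}^{-1}$ each extend across every boundary component by Schwarz reflection, and composing these extensions gives a holomorphic extension of $\Psi$ to a strictly larger neighbourhood of $C_k$. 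On this extended domain, $\Psi - \mathrm{id}$ is holomorphic and vanishes at infinity, and each reflection across a circle quadratically depresses its size at the reflected locus; combining a Cauchy derivative estimate with the contributions from two such reflections per boundary produces the quartic factor $\mu^{4}$. The constant $\mu < 1$ is determined by the hyperbolic geometry of $C$, concretely by the disk radii and their mutual separations, both of which are controlled by the conformal moduli of $R$ only.

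The principal obstacle is the bookkeeping of the reflection argument across a full cycle. At every stage one must confirm that $\Psi$ continues to admit a holomorphic extension across the next reflected circle, which requires that the corresponding boundary component remain exactly circular at the moment of reflection; this forces a careful induction through the $n$ boundaries while tracking how previously-circularized components are perturbed by subsequent iterations and verifying that these perturbations are of higher order. A supporting ingredient I would need first is the preliminary uniform convergence $g_k \to \mathrm{id}$ on $C$, which legitimizes the reflection extensions and provides the baseline smallness $\Delta_k \to 0$; this can be extracted from a normal-families argument based on the infinity-normalization combined with a monotonicity property of the conformal moduli of the excluded components under each $h_k$.
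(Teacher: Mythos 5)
First, note that the paper does not prove this theorem at all: Theorem~\ref{thm:Koebe} is quoted from Henrici~\cite{henrici1993applied} with a citation and no argument, so there is no in-paper proof to compare yours against. Judged on its own terms, your outline reproduces the broad shape of the classical Koebe--Gaier convergence argument --- reduction to a per-cycle contraction $\Delta_{k+n}\le \mu^{4}\Delta_k$, a geometric constant $\mu$ governed by the radii and mutual separations of the disks, and reflection across circular boundaries as the source of the improvement --- but it contains a genuine gap exactly where you yourself locate the ``principal obstacle,'' and the repair you sketch does not close it.

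The gap: Schwarz reflection extends a conformal map across a boundary arc only when \emph{both} the arc and its image are circular (or straight). At stage $k$ the region $C_k=f_k(R)$ has exactly one circular boundary component (the one most recently treated by $h_k$); the other $n-1$ are only approximately circular. Hence $g_k^{-1}$, and a fortiori the cycle map $\Psi=g_{k+n}\circ g_k^{-1}$, extends by reflection across exactly one boundary component, not ``every boundary component'' as your construction of the enlarged domain requires. Your proposed fix --- verifying that the perturbations of previously-circularized components are ``of higher order'' --- is circular: the smallness of those perturbations is precisely what the theorem asserts, and the normal-families convergence $g_k\to\mathrm{id}$ you invoke as a seed carries no rate, so it cannot start the induction. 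The classical proof avoids this by never reflecting the full-cycle composite; it works one step at a time, reflecting only across the single boundary that is exactly circular on both sides at that step, and transports the resulting bound to the remaining boundaries by a maximum-principle/harmonic-measure estimate, the double application of which per component per cycle is where the exponent $4$ actually arises. A secondary omission: absorbing the incomplete cycle of $r<n$ extra steps into $\gamma$ requires a uniform single-step bound of the form $\Delta_{k+1}\le C\,\Delta_k$, which you do not establish.
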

Numerically, in each iteration, we apply the geodesic algorithm to transform one of the boundaries to a circle and also update the coordinates of other boundaries~\cite{marshall2012conformal}. In practice, we find that the algorithm exhibits fast convergence, and usually we can already obtain a satisfactory result after performing only one iteration for each boundary. One example can be found in Fig.~\ref{fig:koebe}. As we shall see later, that is part of the reason why our proposed parallel Koebe's iteration method works.

Below, we also state the extension of the Riemann mapping theorem for multiply-connected domains to quasi-conformal maps presented in the book~\cite{lawrynowicz2006quasiconformal}.
\begin{theorem}
\label{thm:qcm}
Let $D$ be the closure of a domain bounded by $n$ disjoint Jordan curves. Suppose $\mu$ is a measurable function defined in $D$ and $\norm{\mu}_{\infty}<1$. Then, there exists a closed canonical circular domain $D'$ of connectivity $n$ and a solution $f$ to the Beltrami equation~\eqref{eq:1}, which represents a quasi-conformal homeomorphism of $D$ onto $D'$, determined uniquely up to conformal maps of $D'$ onto itself. 
\end{theorem}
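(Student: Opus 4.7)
The plan is to reduce the existence problem to the conformal uniformization result already stated in the excerpt by factoring the desired $f$ as a composition $\psi\circ\phi$, where $\phi$ absorbs the Beltrami coefficient and $\psi$ is a conformal uniformization to a circular domain. First, I would extend $\mu$ to all of $\mathbb{C}$ by setting it to zero outside $D$; this preserves $\|\mu\|_\infty<1$, so the measurable Riemann mapping theorem (the first theorem of Section~\ref{sec:3.1}) produces a quasi-conformal homeomorphism $\phi:\mathbb{C}\to\mathbb{C}$ with $\mu_\phi=\mu$ almost everywhere on $D$. Because $\phi$ is a homeomorphism, $\Omega:=\phi(\mathrm{int}(D))$ is a bounded domain whose boundary is the image of the original $n$ disjoint Jordan curves, hence is itself a union of $n$ disjoint Jordan curves, so $\Omega$ has connectivity $n$.

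Next, I would apply the classical Koebe uniformization (Theorem~\ref{thm:Koebe}) to $\Omega$. After a preliminary M\"obius transformation sending some point in the complement of $\overline{\Omega}$ to infinity, so as to match the normalization $\infty\in R$ required by that theorem, one obtains a univalent analytic map onto the complement of $n$ disjoint closed disks; a further M\"obius correction inverting the unbounded component yields a conformal homeomorphism $\psi$ of $\overline{\Omega}$ onto a bounded closed canonical circular domain $\overline{D'}$ of connectivity $n$. Setting $f:=\psi\circ\phi|_D$ and applying the composition formula~\eqref{eq:3} to the conformal factor $\psi$ (and to the auxiliary M\"obius maps), one obtains $\mu_f=\mu$ almost everywhere, so $f$ solves the Beltrami equation~\eqref{eq:1} and is a quasi-conformal homeomorphism of $D$ onto $\overline{D'}$, as required.

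For uniqueness, suppose $f_1:D\to\overline{D'_1}$ and $f_2:D\to\overline{D'_2}$ are two such solutions with the same Beltrami coefficient $\mu$, and set $F:=f_2\circ f_1^{-1}$. Writing $f_2=F\circ f_1$ and equating Beltrami coefficients via~\eqref{eq:3} gives an equation that, after simplification using $|\mu|<1$ and the fact that $(f_1)_z\neq 0$ almost everywhere, forces $\mu_F\equiv 0$. Hence $F$ is a conformal homeomorphism of $\overline{D'_1}$ onto $\overline{D'_2}$, which is precisely the asserted uniqueness up to conformal self-maps of the canonical circular domain.

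The main obstacles are more bookkeeping than substance: the measurable Riemann mapping theorem as stated requires $\mu$ to live on all of $\mathbb{C}$ whereas our datum is defined only on $D$, and Theorem~\ref{thm:Koebe} is normalized at infinity whereas our $\Omega$ is bounded. Both mismatches are handled by the zero extension and by a pair of auxiliary M\"obius transformations, respectively, and neither introduces genuine analytic difficulty. The only delicate point is that $\psi$ must extend continuously and injectively to $\partial\Omega$ for $f$ to be a homeomorphism of closed domains; this follows componentwise from Carath\'eodory's theorem, once we observe that each boundary component of $\Omega$ is a Jordan curve.
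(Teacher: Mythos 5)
Your proof is correct and follows essentially the same factorization that the paper itself describes immediately after the theorem statement (the paper cites the result without giving a proof): a measurable Riemann mapping step that absorbs $\mu$, followed by Koebe's conformal uniformization onto a circular domain, with the composition formula~\eqref{eq:3} and Weyl's lemma yielding the Beltrami identity $\mu_f=\mu$ and the uniqueness up to conformal maps. The only slip is a citation label: the uniformization result you invoke is the unlabeled theorem at the start of Section~\ref{sec:3.5}, not Theorem~\ref{thm:Koebe}, which is merely the convergence-rate estimate for the iteration.
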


In practice, given a multiply-connected domain $D$ with a prescribed Beltrami coefficient $\mu$, we can first compute a free-boundary quasi-conformal parameterization of it onto a domain $D_1$. After that, we compute the conformal map from $D_1$ to a circular domain $D_2$ using the Koebe's iteration. The composition of these two maps gives the desired result. 

\begin{figure}
\centering
\begin{subfigure}[t]{.5\textwidth}
\centering
\includegraphics[width=.6\linewidth]{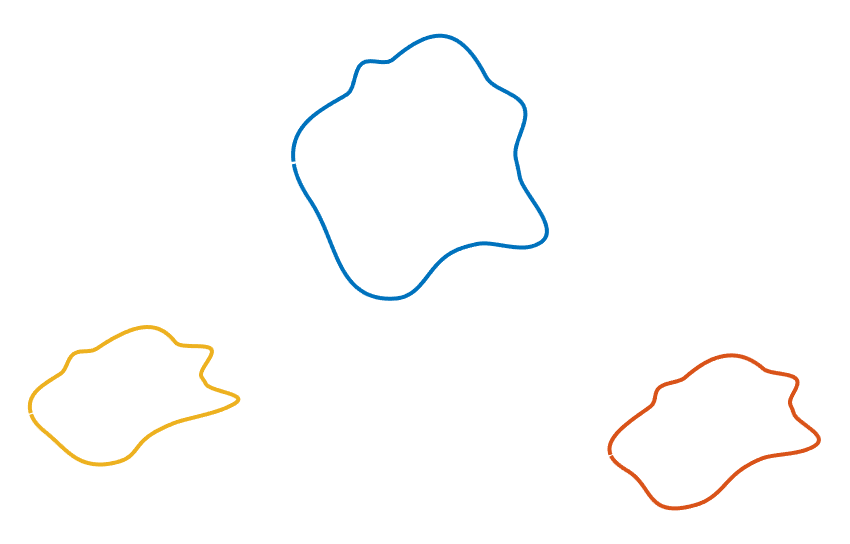}
\caption{The given boundary curves}
\label{fig:koebe1}
\end{subfigure}%
\begin{subfigure}[t]{.5\textwidth}
\centering
\includegraphics[width=.6\linewidth]{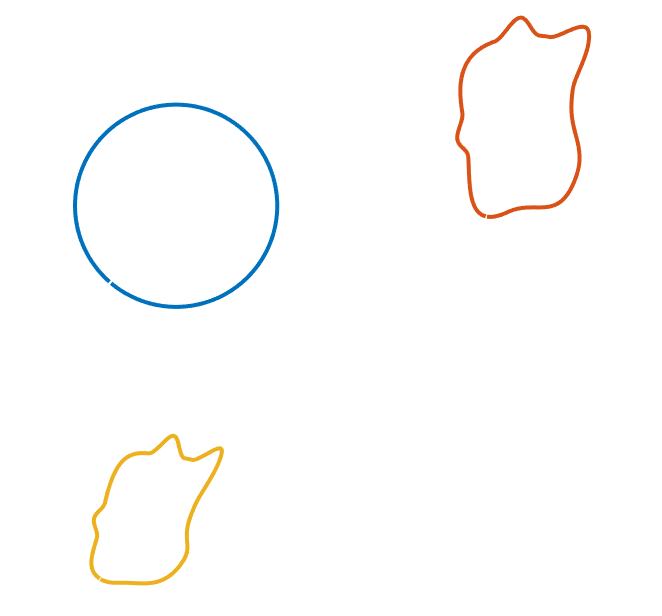}
\caption{First iteration}
\end{subfigure}
\begin{subfigure}[t]{.5\textwidth}
\centering
\includegraphics[width=.6\linewidth]{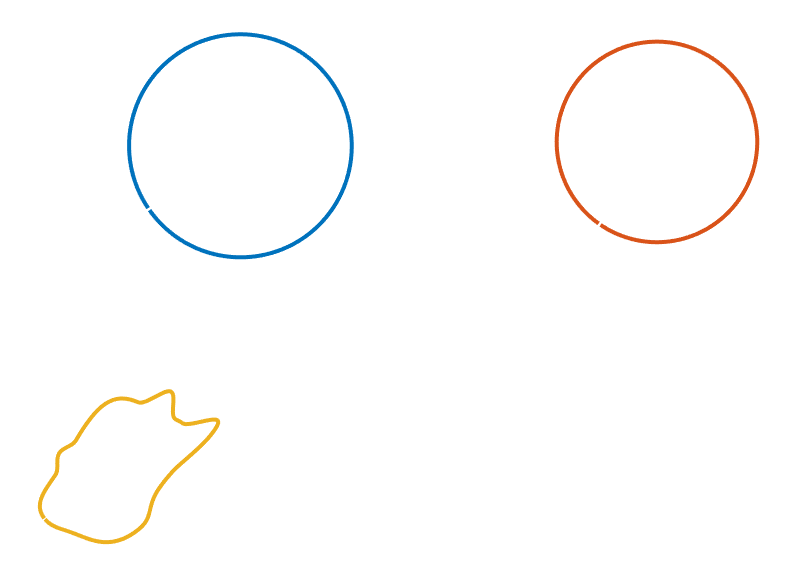}
\caption{Second iteration}
\label{fig:partition_multi}
\end{subfigure}%
\begin{subfigure}[t]{.5\textwidth}
\centering
\includegraphics[width=.6\linewidth]{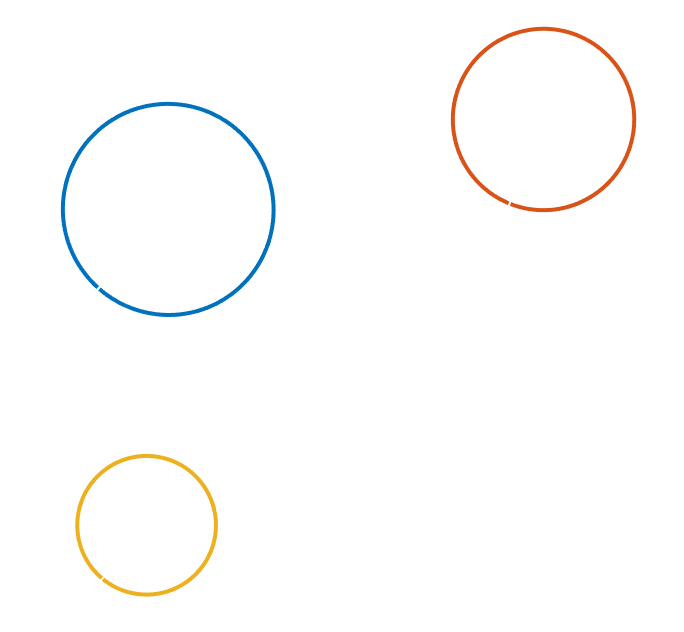}
\caption{Third iteration}
\label{fig:koebe4}
\end{subfigure}%
\caption{An example illustrating the fast convergence of the Koebe's iteration method, with each map computed using the geodesic algorithm.}
\label{fig:koebe}
\end{figure}

\section{Proposed method}
\label{sec:4}
\subsection{An overview of our proposed method}
\label{sec:4.1}
Let $\mathcal{S}$ be a multiply-connected surface in $\mathbb{R}^3$ represented by a triangle mesh $(\mathcal{V},\mathcal{F})$, where $\mathcal{V}$ denotes the set of vertices and $\mathcal{F}$ denotes the set of faces. Given a target Beltrami coefficient $\mu$, we aim to compute the global quasi-conformal parameterization of $\mathcal{S}$ onto the unit disk with circular holes efficiently and accurately. 

First, we partition the entire mesh $\mathcal{S}$ into multiple submeshes $\mathcal{S}_i, i=1,\dots,m$ such that each submesh is either simply-connected or multiply-connected with 1 inner hole of $\mathcal{S}$. Note that we may further partition the submeshes with 1 hole into more smaller simply-connected submeshes if necessary. Then, we compute a free-boundary conformal parameterization $\varphi_i^c:\mathcal{S}_i \to \mathbb{R}^2$ of each submesh onto the plane respectively. Here, we compute the conformal parameterization first because the Beltrami differentials on the surface depend on the choice of isothermal local charts, and the computed free-boundary conformal parameterization serves well in this role for the submeshes. The next step is to compute a free-boundary quasi-conformal map of each flattened submesh $\varphi_i^{qc}: \varphi_i^c(\mathcal{S}_i) \to \mathbb{R}^2$ based on the prescribed Beltrami coefficient, so that the composition $\varphi_i = \varphi_i^{qc} \circ \varphi_i^c$ gives the free-boundary quasi-conformal parameterization for every submesh. Note that both the conformal parameterization and quasi-conformal mapping steps are highly parallelizable as the computations for different submeshes are independent. Since all the remaining steps only involve conformal transformations, by the composition formula~\eqref{eq:3}, the Beltrami coefficient will be preserved by the remaining steps. We apply the geodesic algorithm to transform all the inner holes of the submeshes into circles. This step can be understood as a parallelizable version of the Koebe's iteration. We then apply the welding algorithm to obtain the desired boundary conditions of all submeshes. Note that the inner boundaries after welding are highly circular, as will be illustrated both theoretically and experimentally in the following sections. Finally, we solve the Laplace equation with the updated boundary conditions to obtain the quasi-conformal parameterization for each submesh, all of which together form the desired global quasi-conformal parameterization seamlessly (see Fig.~\ref{fig:overview} for an illustration).

\subsection{Surface partition}
\label{sec:4.2}
We first partition the given multiply-connected surface $\mathcal{S}$ into multiple submeshes $\mathcal{S}_i, i=1,\dots,m$, which can be done by existing mesh partitioning algorithms or manually prescribing some edges as the partition cuts. Suppose $\mathcal{S}$ contains an outer boundary $\gamma_0$ and $k$ disjoint inner boundaries $\{\gamma_i\}_{i=1}^k$, where each $\gamma_i$ is represented by a set of boundary edges, our partition procedure consists of the following two steps. In the first step, we choose a set of cutting edges denoted by $\mathcal{E}_{cut}$ such that $\mathcal{E}_{cut}$ does not contain any boundary edges. The reason is that if we remove $\mathcal{E}_{cut}\bigcup \gamma_0$ from $\mathcal{S}$, we may obtain several subdomains that are disconnected from each other. Hence, we choose the partition $\mathcal{S} = \bigcup_{i=1}^{m}\mathcal{S}_i$ by assigning $\mathcal{S}_{i}$ to be each of the components. In other words, we obtain $\mathcal{S}_1 = (\mathcal{V}_1,\mathcal{F}_1), \mathcal{S}_2 = (\mathcal{V}_2,\mathcal{F}_2), \dots, \mathcal{S}_m = (\mathcal{V}_{m},\mathcal{F}_{m})$. Mathematically, the following conditions should be satisfied:
\begin{equation}
\mathcal{E}_{cut} \bigcap \gamma_i = \emptyset\ \text{for all}\ i = 1,\dots,k,
\end{equation}
and
\begin{equation}
\mathcal{S}_i \bigcap \mathcal{S}_j \subset \mathcal{E}_{cut}\ \text{or}\ \mathcal{S}_i \bigcap \mathcal{S}_j=\emptyset \ \text{for all}\ i,j = 1,\dots,m.
\end{equation}
Here, we restrict all $\mathcal{S}_i$ to be simply-connected or multiply-connected with only 1 inner hole. Such a restriction reduces the difficulty of computing partial welding for multiply-connected meshes and performing the parallel Koebe's iteration, as will be explained later. In the second step, we can further partition the submeshes into smaller meshes if necessary and possible. For example, if a submesh $\mathcal{S}_i$ with 1 hole is still a large mesh, we can partition it into several simply-connected meshes.

\subsection{Free-boundary quasi-conformal parameterization of the submeshes}
\label{sec:4.3}
After getting the submeshes $\mathcal{S}_i, i=1,\dots,m$, we compute a free-boundary conformal parameterization of each of them onto the plane followed by a free-boundary quasi-conformal map using the variational formulation in Section~\ref{sec:3.2} by a finite element approach. 

The numerical computation of the quasi-conformal mapping follows the implementation described in~\cite{qiu2019computing}. Given a flattened triangle mesh $\Omega$ represented by a set of vertices $\{w_1,w_2,\dots,w_n\}$ and a set of triangle faces, we discretize the prescribed Beltrami coefficient $\mu$ on $\Omega$ by assuming that $\mu$ is \emph{piecewise constant} on each triangle face, i.e., $\mu = \mu_{T}$ for some constant $\mu_{T}$ on each triangle face $T$ of $\Omega$. We aim to compute a map $f = u + iv:\Omega \rightarrow \tilde{\Omega}$ where $\tilde{\Omega}$ is a triangle mesh with same connectivity as $\Omega$ such that $f$ satisfies the Beltrami equation~\eqref{eq:1} in the sense that $f$ is \emph{piecewise linear} on each face $T$ and $\mu_{f}|_{T} = \mu_{T}$ for each face $T$. We denote the vertices of $\tilde{\Omega}$ by $\{f(w_1),f(w_2),\dots,f(w_n)\} = \{u_1+iv_1,u_2+iv_2,\dots,u_n+iv_n\}$. Let $u = \begin{pmatrix} u_1 & u_2 & \cdots & u_n \end{pmatrix}^T$ and $v = \begin{pmatrix} v_1 & v_2 & \cdots & v_n \end{pmatrix}^T$.

We then discretize the energies $E_{A}(u)=\int_{\Omega}\norm{A^{1/2}\nabla u}^2$ and $E_{A}(v)=\int_{\Omega}\norm{A^{1/2}\nabla v}^2$ in Equation~\eqref{eq:7} in the following way. Let $T$ be an arbitrary triangle with vertices $[w_0^T,w_1^T,w_2^T]$. Suppose the image of $T$ under $f$ is $[f(w_0^T),f(w_1^T),f(w_2^T)]=[u_0^T+iv_0^T,u_1^T+iv_1^T,u_2^T+iv_2^T]$. Since $f$ is linear on $T$, we can express the gradient of $f$ as 
\begin{equation}
\label{eq:11}
\nabla u|_T = \dfrac{1}{2\text{Area}(T)}\begin{pmatrix}0&-1\\1&0\end{pmatrix}\sum_{i=0}^2u_i(w_{2+i}^{T}-w_{1+i}^{T}),
\end{equation}
and
\begin{equation}
\label{eq:12}
\nabla v|_T = \dfrac{1}{2\text{Area}(T)}\begin{pmatrix}0&-1\\1&0\end{pmatrix}\sum_{i=0}^2v_i(w_{2+i}^{T}-w_{1+i}^{T}).
\end{equation}
Since $\mu$ is piecewise constant on each face $T$, the matrix $A$ given by Equation~\eqref{eqt:A} is a constant matrix determined by $\mu_{T}$ on each $T$. We can then discretize $\int_{\Omega}\norm{A^{1/2}\nabla u}^2$ and $\int_{\Omega}\norm{A^{1/2}\nabla v}^2$ by summing over all faces. We then obtain two quadratic forms
\begin{equation}
    E_{A}(u) = u^T \mathcal{L}_{\mu} u \ \ \text{ and } \ \ E_{A}(v) = v^T \mathcal{L}_{\mu} v,
\end{equation}
where $\mathcal{L}_{\mu}$ is a symmetric matrix called the \emph{generalized Laplacian matrix}. Furthermore, using Equations~\eqref{eq:11} and~\eqref{eq:12}, we can discretize the area matrix $\mathcal{A}(u,v)=\int_{\Omega}u_{x}v_{y}-v_{x}u_{y}$ in Equation~\eqref{eq:10} as another quadratic form
\begin{equation}
    \mathcal{A}(u,v) = \begin{pmatrix} u^T & v^T\end{pmatrix} \begin{pmatrix} 0 & U \\ -U & 0 \end{pmatrix} \begin{pmatrix} u \\ v \end{pmatrix}
    \label{eqt:area_matrix}
\end{equation}
for some skew-symmetric matrix $U$. Let 
\begin{equation}
    M = \begin{pmatrix} \mathcal{L}_{\mu} & 0 \\ 0 & \mathcal{L}_{\mu} \end{pmatrix} - \begin{pmatrix} 0 & U \\ -U & 0 \end{pmatrix}.
    \label{eqt:M}
\end{equation}
Note that $M$ is symmetric. By Equation~\eqref{eq:9}, to obtain the desired free-boundary quasi-conformal map, it suffices to solve the equation
\begin{equation}
\label{eq:13}
     E_{\text{QC}}^{\mu}(u,v) = \begin{pmatrix} u^T & v^T\end{pmatrix} M \begin{pmatrix} u \\ v \end{pmatrix} = 0.
\end{equation}
We have the following theorem:
\begin{theorem}
The solution of Equation~\eqref{eq:13} is unique under scaling, rotation, and translation.
\end{theorem}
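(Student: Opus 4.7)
The plan is to analyze the null space of the symmetric matrix $M$ in \eqref{eqt:M}, since $E_{QC}^{\mu}(u,v) \geq 0$ implies $M$ is positive semi-definite and solutions of \eqref{eq:13} coincide with $\ker M$. I would show that $\ker M$ is exactly the four-real-parameter family generated by translation, rotation and scaling, which in complex notation amounts to $f \mapsto af+b$ with $a \in \mathbb{C}^*$, $b \in \mathbb{C}$.

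\textbf{Existence direction.} First I would verify that the transformations claimed all lie in the solution set. Writing $f = u+iv$ and $g(z)=az+b$, the composition formula \eqref{eq:3} gives $\mu_{g\circ f}=\mu_f$ because $g$ is conformal. On each triangle $T$ the piecewise linear map $f$ is affine, and so is $g \circ f$; hence if $(u,v)$ is piecewise linear and realizes $\mu|_T=\mu_T$ on every $T$, then so does the transformed map $af+b$. This exhibits a four-dimensional linear subspace of $\ker M$.

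\textbf{Uniqueness direction.} Suppose $f_1=u_1+iv_1$ and $f_2=u_2+iv_2$ are two piecewise linear solutions realizing the prescribed Beltrami coefficient $\mu_T$ on every face $T$. Because $\|\mu\|_\infty<1$, the Jacobian of $f_1$ is strictly positive on each $T$ by \eqref{eq:2}, so $f_1|_T$ is an affine bijection onto the non-degenerate triangle $f_1(T)$. The restriction of $f_2\circ f_1^{-1}$ to $f_1(T)$ is then affine and, by \eqref{eq:3} applied to $f_2=(f_2\circ f_1^{-1})\circ f_1$, has Beltrami coefficient $0$. Hence on each face there exist constants $a_T\in\mathbb{C}^*$, $b_T\in\mathbb{C}$ with $f_2\circ f_1^{-1}(z)=a_Tz+b_T$ on $f_1(T)$. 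Across any shared edge of two faces $T_1,T_2$, the maps $f_1$ and $f_2$ are continuous, so the two affine expressions agree on the non-degenerate segment $f_1(T_1)\cap f_1(T_2)$; two affine maps that coincide on a segment are equal, so $a_{T_1}=a_{T_2}$ and $b_{T_1}=b_{T_2}$. The mesh being edge-connected (a standing assumption for the discrete quasi-conformal formulation) then propagates the equality to all faces, yielding global constants $a,b$ with $f_2=af_1+b$, which is the asserted scaling, rotation, and translation ambiguity.

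\textbf{Anticipated obstacle.} The qualitative argument is short, but the delicate point is justifying that the piecewise analysis correctly captures $\ker M$: one must ensure that (i) $f_1$ is a genuine bijection triangle-by-triangle so that $f_2\circ f_1^{-1}$ makes sense (handled via $\|\mu\|_\infty<1$ and hence $J_{f_1}>0$), and (ii) that the gluing across shared edges actually forces $a_T,b_T$ to be globally constant rather than merely locally so — this uses edge-connectedness plus the fact that an affine map is determined by two points, not just by mesh continuity at vertices. Once these two subtleties are handled, the dimension count $\dim \ker M = 4$ follows, and the theorem is proved.
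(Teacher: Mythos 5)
Your route is genuinely different from the paper's. The paper's proof is linear-algebraic: it fixes two vertex images, invokes Proposition~2.13 of the cited work of Qiu \emph{et al.} to conclude the reduced matrix $B$ has full rank (hence uniqueness once two points are pinned), separately verifies that scalings, rotations and translations preserve the kernel of $M$, and then notes that the similarity group acts transitively on choices of two pinned points. Your proof replaces the external rank citation with a self-contained geometric argument: per-face affinity of $f_2\circ f_1^{-1}$, vanishing Beltrami coefficient forcing $f_2\circ f_1^{-1}|_{f_1(T)}=a_Tz+b_T$, and propagation of $(a_T,b_T)$ across shared edges by edge-connectedness. This buys independence from the reference and makes the four-dimensional kernel count transparent; your observation that one needs $M$ positive semi-definite (not merely symmetric) to pass from $x^TMx=0$ to $Mx=0$ is also more careful than the paper's own wording.

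There is, however, a genuine gap in your uniqueness direction. The solution set of Equation~\eqref{eq:13} is all of $\ker M$, and $\ker M$ contains degenerate elements: the paper itself notes that any constant map $f(w_i)\equiv(x_0,y_0)$ annihilates $E_A(u)$, $E_A(v)$ and $\mathcal{A}(u,v)$. More generally, the per-face condition $P\nabla u+JP\nabla v=0$ is satisfied both by non-degenerate affine maps with $\mu_f|_T=\mu_T$ and by maps with $\nabla u|_T=\nabla v|_T=0$, i.e.\ faces collapsed to points. Your justification that $J_{f_1}>0$ ``by $\|\mu\|_\infty<1$ and \eqref{eq:2}'' does not close this: \eqref{eq:2} reads $J_\phi=|\phi_z|^2(1-|\mu_\phi|^2)$, which is positive only when $\phi_z\neq 0$, and a kernel element may well have $\phi_z=0$ on some or all faces, in which case $f_2\circ f_1^{-1}$ is undefined there. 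The fix is available with the tool you already use: if a kernel element collapses a face $T_1$ but not an adjacent face $T_2$, the shared edge is simultaneously mapped to a point and to a non-degenerate segment, a contradiction; so by edge-connectedness every kernel element is either globally non-degenerate (handled by your argument, against one fixed non-degenerate reference solution $f_1$) or globally constant (a translation, i.e.\ $a=0$ in $af_1+b$). You should also state, rather than silently assume, that at least one non-degenerate solution $f_1$ exists, since both your argument and the dimension count $\dim\ker M=4$ hinge on it.
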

\begin{proof}
Since $M$ is symmetric, it suffices to solve the equation
\begin{equation}
    M\begin{pmatrix} u \\ v \end{pmatrix} = 0.
\end{equation}
Suppose we fix two arbitrary points from $\{f(w_1),f(w_2),\dots,f(w_n)\}$. Then, we need to solve
\begin{equation}
    B\begin{pmatrix} u \\ v \end{pmatrix} = b,
\end{equation}
for some matrix $B$ and vector $b$. A direct consequence of Proposition 2.13 in~\cite{qiu2019computing} is that the matrix $B$ is of full rank. Hence, we obtain a unique solution if two arbitrary points are fixed.

On the other hand, suppose $\begin{pmatrix} u_0 \\ v_0 \end{pmatrix}$ is one solution to Equation~\eqref{eq:13}. Then, it is easy to check for any $k\in \mathbb{R}$, we have
\begin{equation}
    M \begin{pmatrix} k u_0 \\ k v_0 \end{pmatrix} = 0.
\end{equation}
Also, for any $\theta \in [0,2\pi]$, let $u_1 = \cos\theta u_0 - \sin\theta v_0$ and $v_1 = \sin\theta u_0 + \cos\theta v_0$. We have
\begin{equation}
    M \begin{pmatrix} u_1 \\ v_1 \end{pmatrix} = 0.
\end{equation}
Further, notice that if we let $f(w_i) = (x_0,y_0)$ for some $x_0,y_0$ for all $i = 1,\dots,n$, then clearly $E_{A}(u)$, $E_{A}(v)$, and $\mathcal{A}(u,v)$ are all zero. As a result, for any $x,y\in \mathbb{R}$,
\begin{equation}
    M \begin{pmatrix} u_0 + x \\ v_0 + y \end{pmatrix} = 0.
\end{equation}
Suppose the unique solution we obtain by fixing $f(w_i)$ and $f(w_j)$ to $(x_i,y_i)$ and $(x_j,y_j)$ respectively is $\begin{pmatrix} u_0 \\ v_0 \end{pmatrix}$. We can transform $\begin{pmatrix} u_0 \\ v_0 \end{pmatrix}$ by scaling, rotation, and translation so that $f(w_s) = (x_s,y_s)$ and $f(w_t) = (x_t,y_t)$ for arbitrary $s,t,(x_s,y_s),(x_t,y_t)$. We denote the transformed data points by $\begin{pmatrix} u_0 \\ v_0 \end{pmatrix}$. Notice that this is exactly the unique solution we can obtain by fixing $f(w_s)$ and $f(w_t)$ to $(x_s,y_s)$ and $(x_t,y_t)$ respectively. This completes the proof.
\end{proof}

As for the boundary conditions for solving the linear system, we usually set the target positions of two boundary vertices that are far away from each other in $\mathcal{M}$ as $(0,0)$ and $(1,0)$ to control the scale of the free-boundary mapping result.

Since the Beltrami differential on a surface in $\mathbb{R}^3$ depends on the choice of local chart (see Section~\ref{sec:3.1}), we cannot directly apply this method to compute a quasi-conformal flattening of a surface. Instead, we need to first compute a free-boundary conformal flattening $\varphi^c$ of a surface $\mathcal{S}$ onto $\mathbb{R}^2$ and then apply the above method to get a free-boundary quasi-conformal map $\varphi^{qc}$ in $\mathbb{R}^2$. For the conformal flattening map $\varphi^c = (u,v): \mathcal{S} \to \tilde{\Omega}$, the Dirichlet energy can be discretized as
\begin{equation}
    E(u) + E(v) = \frac{1}{2}\int_{\mathcal{S}}(\norm{\nabla u}^2 + \norm{\nabla v}^2) = 
    \begin{pmatrix} u^T & v^T\end{pmatrix} \begin{pmatrix} \mathcal{L} & 0 \\ 0 & \mathcal{L} \end{pmatrix} \begin{pmatrix} u \\ v \end{pmatrix},
    \label{eqt:DNCP_energy}
\end{equation}
where $\mathcal{L}$ is the cotangent Laplacian matrix~\cite{pinkall1993computing}. The DNCP method~\cite{desbrun2002intrinsic} discretizes the area using an approach different from Equation~\eqref{eqt:area_matrix}. Specifically, by Green's theorem,
\begin{equation}
    \mathcal{A}(\varphi^c) = \int_{\tilde{\Omega}} dx\, dy = \frac{1}{2} \oint_{\partial \tilde{\Omega}} -y \, dx + x \, dy,
    \label{eqt:green}
\end{equation}
Therefore, in the simply-connected case which \cite{desbrun2002intrinsic} focuses on, the area is discretized as
\begin{equation}
    \mathcal{A}(\varphi^c) = \dfrac{1}{2}\sum\limits_{[w_i,w_j] \in \partial \mathcal{S}} (u_iv_j-u_jv_i) = \begin{pmatrix} u^T & v^T\end{pmatrix} Q \begin{pmatrix} u \\ v \end{pmatrix}
    \label{eqt:area_term}
\end{equation}
for some symmetric matrix $Q$. The free-boundary conformal parameterization $\varphi^c$ is then obtained by solving
\begin{equation}
\Bigg(\left(
\begin{array}{cc}
\mathcal{L} & 0 \\
0 & \mathcal{L}
\end{array}
\right) - Q\Bigg) \begin{pmatrix}
      u \\ v \end{pmatrix} = 0.
     \label{eqt:DNCP}
\end{equation}

In our case, some submeshes obtained from the partition step may be multiply-connected. To apply the DNCP formulation for parameterizing them, a nature extension of Equation~\eqref{eqt:area_term} for multiply-connected meshes is presented below. Let $\mathcal{S}$ be a multiply-connected mesh. Denote the outer boundary of it as $\gamma_0$ and the inner boundaries as $\gamma_1, \dots, \gamma_p$, where $p \geq 1$. The area $\mathcal{A}(\varphi^c)$ can then be discretized as
\begin{equation}
    \mathcal{A}(\varphi^c) = \mathcal{A}_0 - \mathcal{A}_1 - \cdots - \mathcal{A}_p,
    \label{eqt:area_matrix_multiply}
\end{equation}
where $\mathcal{A}_0, \dots, \mathcal{A}_p$ are the areas of the regions enclosed by $\gamma_0, \dots, \gamma_p$ respectively. Each of them can be computed using the formula in Equation~\eqref{eqt:area_term}. Since all terms are expressed using the corresponding boundary vertices in $\mathcal{S}$, the area $\mathcal{A}(\varphi)$ can again be written in the form $\begin{pmatrix} u^T & v^T\end{pmatrix} \tilde{Q} \begin{pmatrix} u \\ v \end{pmatrix}$ for some matrix $\tilde{Q}$. We can then replace $Q$ with $\tilde{Q}$ in Equation~\eqref{eqt:DNCP} and solve it to obtain the free-boundary conformal parameterization $\varphi^c$.
\begin{remark}
Careful checking reveals that the two approaches for discretizing the area functional in Equation~\eqref{eqt:area_matrix} and Equation~\eqref{eqt:area_matrix_multiply} in fact give us the same quadratic form for mappings in the plane and hence either of them can be used for the computation of the 2D quasi-conformal map $\varphi^{qc}$. In practice, Equation~\eqref{eqt:area_matrix} is a direct summation of energies over all faces, while Equation~\eqref{eqt:area_matrix_multiply} only involves the boundary vertices but requires the boundary edges to be extracted and in correct orientations.
\end{remark}

We summarize the procedure for the free-boundary quasi-conformal parameterization in Algorithm~\ref{alg:free}.

\begin{algorithm2e}[h!]
\KwInput{An open surface $\mathcal{S}_i$ with $p \geq 0$ inner holes and a Beltrami coefficient $\mu$.}
\KwOutput{A free-boundary quasi-conformal parameterization $\varphi_i:\mathcal{S}_i \to \mathbb{R}^2$.}
\SetKwBlock{Conformal}{Initial conformal parameterization step:}{}
\SetKwBlock{QC}{Quasi-conformal mapping step (if $\mu \neq 0$):}{}
\Conformal{
Compute the cotangent Laplacian matrix $\mathcal{L}$ of $\mathcal{S}_i$\;
Compute the area of $\mathcal{S}_i$ using Equation~\eqref{eqt:area_term} (if $p = 0$) or Equation~\eqref{eqt:area_matrix_multiply} (if $p \geq 1$)\;
Compute a free-boundary conformal parameterization $\varphi_i^{c}: \mathcal{S}_i \to \mathbb{R}^2$ by solving Equation~\eqref{eqt:DNCP}\;
}
\QC{
Compute the generalized Laplacian matrix $\mathcal{L}_{\mu}$\;
Compute the area matrix using Equation~\eqref{eqt:area_matrix} or Equation~\eqref{eqt:area_matrix_multiply}\;
Compute a free-boundary quasi-conformal map $\varphi_i^{qc}: \varphi_i^c(\mathcal{S}_i) \to \mathbb{R}^2$ by solving Equation~\eqref{eq:13};\\
}
The desired free-boundary quasi-conformal parameterization is given by $\varphi_i = \varphi_i^{qc} \circ \varphi_i^{c}$\;
\caption{Free-boundary quasi-conformal parameterization of simply-connected and multiply-connected open surfaces} 
\label{alg:free}
\end{algorithm2e}

\subsection{Partial welding}
\label{sec:4.4}
In the closed conformal welding problem introduced in Section~\ref{sec:3.3}, we are given a homeomorphism between the boundaries of two shapes and we need to glue the entire boundaries consistently. By contrast, in our problem we partition a mesh into several submeshes and compute the free-boundary quasi-conformal maps for them respectively, and hence we only need to conformally glue these submeshes along the partition paths to obtain the global quasi-conformal parameterization. Since the outer boundary edges are never contained in the partition paths, the gluing paths are just continuous subsets of the boundary of the submeshes. Therefore, we need to conformally glue two submeshes with respect to a homeomorphism between two partial arcs of their boundaries. To solve this problem, we extend the partial welding method developed in~\cite{choi2020parallelizable,choi2022free}, which is a variant of the geodesic algorithm designed for handling simply-connected surfaces. Below, we first briefly introduce the method for the simply-connected case and then describe how we can extend it for meshes with holes. 

\subsubsection{The simply-connected case}
The geodesic algorithm solves the closed welding problem by aligning the corresponding boundary points one-by-one. For the partial welding method, the key idea is to stop the welding process after we have exactly aligned the corresponding partial set of boundary points. Suppose we are given two sets of consecutive boundary points $\partial A = \{a_0,\dots,a_k,\dots,a_m\}$ and $\partial B = \{b_0,\dots,b_k,\dots,b_n\}$, where $a_i$ corresponds to $b_i$ for $i = 0,\dots,k$. This gives rise to a correspondence function $f:\gamma_{A}\subset \partial A \rightarrow \gamma_{B} \subset \partial B$, where $\gamma_{A}$ and $\gamma_{B}$ are the circular arcs formed by $\{a_0,\dots,a_k\}$ and $\{b_0,\dots,b_k\}$ respectively, such that $f(a_i) = b_i$ for $i=0,\dots,k$. Now, the objective is to find two conformal maps $\Phi_{A},\Phi_{B}$ such that $\Phi_{A}(\gamma_A) = \Phi_{B}(f(\gamma_A))$. Similar to the closed welding problem, we first find mappings $\Psi_A$ and $\Psi_B$ to map $\gamma_A$ and $\gamma_B$ to the upper and lower imaginary axis respectively, and then weld the boundary points one-by-one. The maps $\Psi_A$ and $\Psi_B$ can be realized by a half-way geodesic algorithm. The images of $\gamma_A$ and $\gamma_B$ under them are called \emph{intermediate forms}. We summarize this process in Algorithm~\ref{alg:1} as in~\cite{choi2020parallelizable}.

\begin{algorithm2e}[h!]
\KwInput{A sequence of boundary points $\{z_0,\dots,z_k,\dots,z_n\}$ constituting a closed curve and a choice of branching.}
\KwOutput{A sequence of transformed boundary points $\{Z_0,\dots,Z_k,\dots,Z_n\}$,where $Z_0,\dots,Z_k$ are on the imaginary axis according to the choice of branching.}
Let $g_1(z) = \sqrt{\dfrac{z-z_1}{z-z_0}}$ with the choice of branching; \\
\For{$j=2,\dots,k$}
{
Compute $\xi_j = (g_{j-1}\circ \cdots \circ g_{1})(z_j)$; \\
Let $g_{j}(z)=\sqrt{L_{\xi_{j}}(z)^2-1}$ with the choice of branching, where $L_{\xi_j}(z):=\dfrac{\frac{\text{Re}(\xi_j)}{\abs{\xi_j}^2}z}{1+\frac{\text{Im}(\xi_j)}{\abs{\xi_j}^2}zi}$;
}
Set $g_{k+1}(z) = \dfrac{z}{1-\frac{z}{g_k\circ g_{k-1}\circ \cdots \circ g_1(z_0)}}$;\\
Compute $Z_l = (g_{k+1}\circ \cdots \circ g_1)(z_l)$ for $l = 0,\dots,k,\dots,n$;
\caption{Intermediate form transformation} 
\label{alg:1}
\end{algorithm2e}

After performing the intermediate form transformation with respect to two different branches $(-1)^{1/2} = i$ and $(-1)^{1/2} = -i$, we obtain two set of boundary points $\{A_0,\dots,A_k$, $\dots,A_m\}$ and $\{B_0,\dots,B_k,\dots,B_n\}$, all of which are in the region $\{z \in \mathbb{C}: \text{Re}(z) \geq 0\}$. In particular, $\{A_0,\dots,A_k\}$ are on the upper imaginary axis, while the corresponding $\{B_0,\dots,B_k\}$ are on the lower imaginary axis. Next, we perform the welding step of the geodesic algorithm to weld the corresponding boundary points one-by-one conformally. The crucial point is the construction of the following M\"{o}bius transformation. Suppose $\alpha = ai$ and $\beta = bi$ are two corresponding points to be conformally aligned, where $a>0>b$. The unique M\"{o}bius transformation that maps $(\alpha,0,\beta)$ to $(i,0,-i)$ is explicitly given by
\begin{equation}
    \label{eq:14}
    T_{\alpha}^{\beta}(z) = \dfrac{z}{\frac{-2ab}{a-b}-\frac{a+b}{a-b}zi}.
\end{equation}
Consider the conformal map $z \mapsto \sqrt{z^2+1}$, which maps both $i$ and $-i$ to 0. The composition of this map and $T_{\alpha}^{\beta}$ will map both $\alpha$ and $\beta$ to 0. Now, we apply such transformations to $\{A_0,\dots,A_k\}$ and $\{B_0,\dots,B_k\}$ iteratively. In the $j$-th step, suppose we have obtained   
\begin{equation}
    \alpha_j = (h_{j-1}^{A}\circ \cdots \circ h_0^A)(A_j)
\end{equation}
and
\begin{equation}
    \beta_j = (h_{j-1}^B\circ \cdots \circ h_0^B)(B_j).
\end{equation}
Then, we define
\begin{equation}
    h_j^A(z) := \sqrt{T_{\beta_j}^{\alpha_j}(z)^2+1},\text{with branching } (-1)^{1/2} = i,
\end{equation}
and
\begin{equation}
    h_j^B(z) := \sqrt{T_{\beta_j}^{\alpha_j}(z)^2+1},\text{with branching } (-1)^{1/2} = -i.
\end{equation}
Both $h_j^A$ and $h_j^B$ are conformal as they are compositions of M\"{o}bius transformations, square maps, and square root maps, and the only difference between $h_j^A$ and $h_j^B$ is the choice of the branching. We apply these two maps to align $A_j$ and $B_j$. The images of all other points under $h_j^A$ and $h_j^B$ should also be updated in this iteration. After aligning all the corresponding points, we consider a conformal closing map $h_0$ similar to that in the geodesic algorithm:
\begin{equation}
    h_0(z) := \bigg(\dfrac{z}{1-\frac{z}{(h_1^A\circ\cdots\circ h_k^A)(\infty)}}\bigg)^2.
\end{equation}
We may also use auxiliary points $A_{m+1} = B_{n+1} = 0$ and $A_{m+2} = B_{n+2} = \infty$ to help us perform some normalization maps to obtain more regular results as proposed in~\cite{marshall2007convergence}. The detailed algorithm is summarized in Algorithm~\ref{alg:2} as in~\cite{choi2020parallelizable}.

\begin{algorithm2e}[h!]
\KwInput{Two sequences of boundary points $\{a_0,\dots,a_k,\dots,a_m\}$ and $\{b_0,\dots,b_k,\dots,b_n\}$, where $a_j$ should be aligned with $b_j$ for $j=0,\dots,k$.}
\KwOutput{Transformed data points $\{\tilde{a}_0,\dots,\tilde{a}_k,\dots,\tilde{a}_m\}$ and $\{\tilde{b}_0,\dots,\tilde{b}_k,\dots,\tilde{b}_n\}$ such that $\tilde{a}_i = \Phi_A(a_i),i=1,\dots,m$ and $\tilde{b}_i=\Phi_B(b_i),i=1,\dots,n$ for some conformal $\Phi_A$ and $\Phi_B$, and $\tilde{a}_j = \tilde{b}_j,j=0,\dots,k$.}
Define auxiliary points $a_{m+1}=b_{n+1}=0,a_{m+2}=b_{n+2}=\infty$; \\
Apply Algorithm~\ref{alg:1} on $\{a_0,\dots,a_k,\dots,a_m,a_{m+1},a_{m+2}\}$ with branching $(-1)^{1/2}=i$ to obtain $\{A_0,\dots,A_k,\dots,A_m,A_{m+1},A_{m+2}\}$. Denote the transformation by $\Psi_A$;\\
Apply Algorithm~\ref{alg:1} on $\{b_0,\dots,b_k,\dots,b_n,b_{n+1},b_{n+2}\}$ with branching $(-1)^{1/2}=-i$ to obtain $\{B_0,\dots,B_k,\dots,B_n,B_{n+1},B_{n+2}\}$. Denote the transformation by $\Psi_B$;\\
Set $h_{k-1}^{A}(z) := \sqrt{T_{B_{k-1}}^{A_{k-1}}(z)^2+1}$ with branching $(-1)^{1/2}=i$, and $h_{k-1}^{B}(z) := \sqrt{T_{B_{k-1}}^{A_{k-1}}(z)^2+1}$ with branching $(-1)^{1/2}=-i$;\\
\For{$j=k-2,\dots,1$}
{
Compute $\alpha_j = (h_{j+1}^A\circ\cdots\circ h_{k-1}^A)(A_j)$;\\
Compute $\beta_j = (h_{j+1}^B\circ\cdots\circ h_{k-1}^B)(B_j)$;\\
Set $h_j^{A}(z):=\sqrt{T_{\beta_j}^{\alpha_j}(z)^2+1}$ with branching $(-1)^{1/2}=i$ and $h_j^{B}(z):=\sqrt{T_{\beta_j}^{\alpha_j}(z)^2+1}$ with branching $(-1)^{1/2}=-i$;
}
Set $h_0(z) := \bigg(\dfrac{z}{1-\frac{z}{(h_1^A\circ\cdots\circ h_k^A)(\infty)}}\bigg)^2$;\\
Compute $\tilde{a}_l = (h_0\circ\cdots\circ h_{k-1}^A)(A_l)$ for $l=0,\dots,m+2$;\\
Compute $\tilde{b}_l = (h_0\circ\cdots\circ h_{k-1}^B)(B_l)$ for $l=0,\dots,n+2$;\\
Apply a M\"{o}bius transformation $T$ that maps $(\tilde{a}_{m+1},\tilde{b}_{n+1},\frac{1}{2}(\tilde{a}_{m+2}+\tilde{a}_{n+2}))$ to $(-1,1,\infty)$ for all points to obtain the final result.
\caption{Partial welding}
 \label{alg:2}
\end{algorithm2e}

\subsubsection{The multiply-connected case}
In the simply-connected case, when we partition the given mesh, we can ensure that the partition path is continuous. Therefore, when we apply partial welding to retrieve the entire mesh, the welding path is a continuous curve. However, this condition cannot be guaranteed for multiply-connected surfaces. On one hand, in many situations, it is natural to partition the entire mesh into several simply-connected submeshes, which could reduce the computational cost and increase the stability of the algorithm. On the other hand, imposing too many restrictions on the mesh partition step could increase the difficulty and complexity of it. As a result, dealing with situations where the welding path is discontinuous, as shown in Fig.~\ref{fig:partitions}, is inevitable.

\begin{figure}[t]
\centering
\begin{subfigure}[t]{.5\textwidth}
\centering
\includegraphics[width=.6\linewidth]{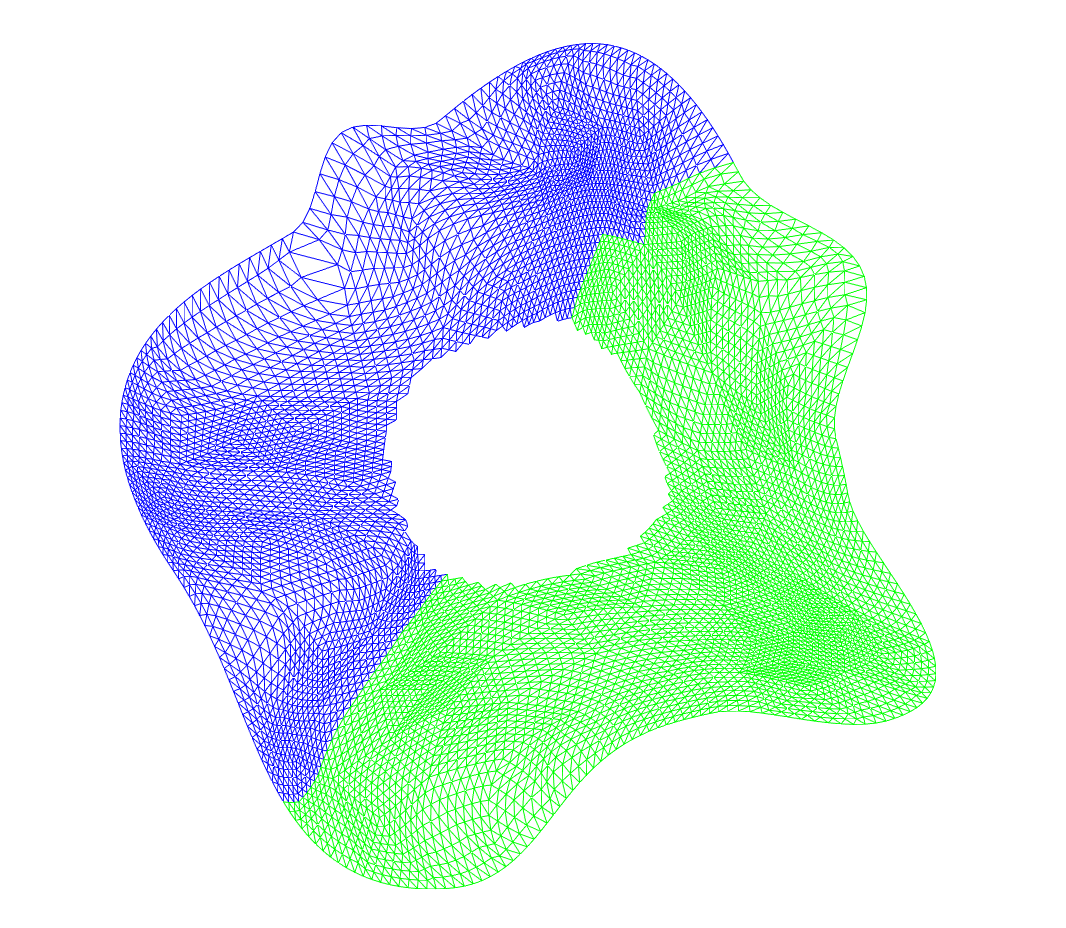}
\caption{Two submeshes}
\end{subfigure}%
\begin{subfigure}[t]{.5\textwidth}
\centering
\includegraphics[width=.6\linewidth]{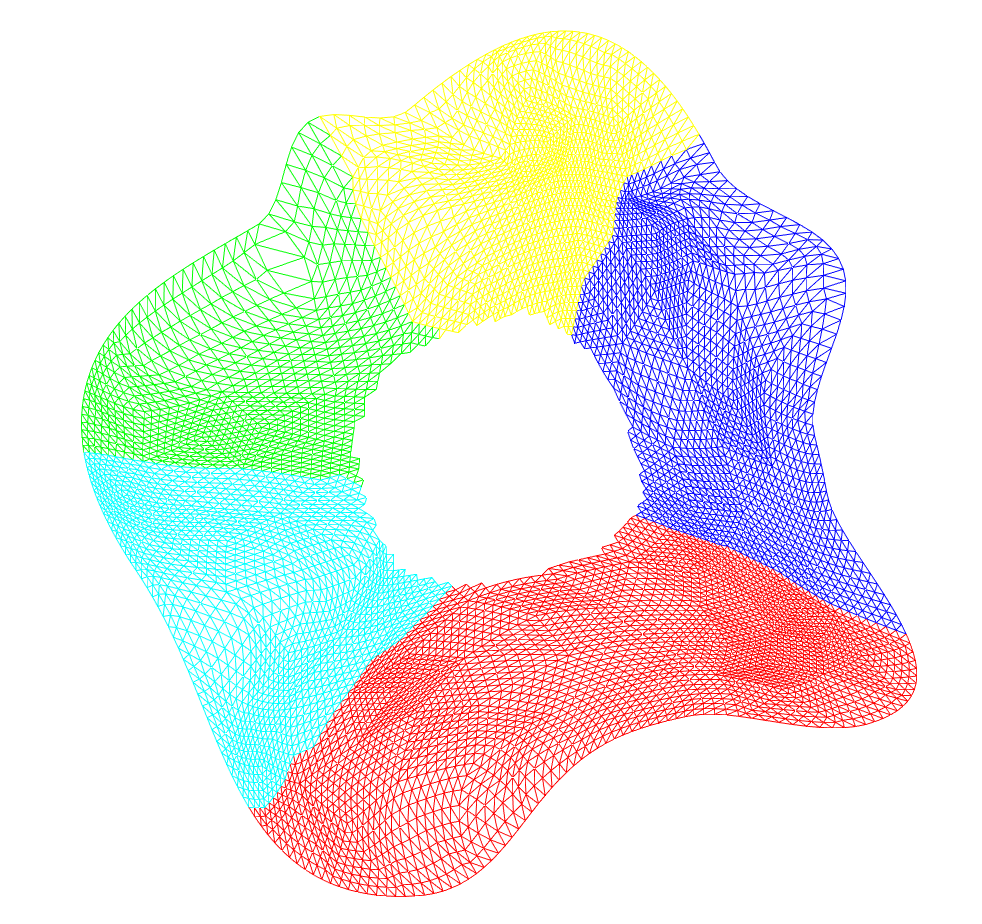}
\caption{More submeshes}
\end{subfigure}
\caption{Partitioning a multiply-connected mesh.}
\label{fig:partitions}
\end{figure}

More specifically, in Fig.~\ref{fig:partitions}(a), we partition the given mesh into two simply-connected submeshes (the blue one and the green one). It can be observed that the common boundary components of the two submeshes are two disjoint continuous arcs instead of one continuous arc. Since the inner hole is large, if we partition the surface in a way such that the inner hole is totally contained in one submesh, the welding path will contain relatively more points than the case shown in the figure, which increases the computational cost of the welding process. Also, if the inner hole is irregular in shape, imposing the requirement that it is contained in one submesh may cause the partition method to generate a highly irregular submesh, which is undesirable. Therefore, it is important to develop a welding method for handling the situation in Fig.~\ref{fig:partitions}(a). In case the partition consists of more submeshes like Fig.~\ref{fig:partitions}(b), we can weld the submeshes that share continuous boundary arcs and eventually reach the state in Fig.~\ref{fig:partitions}(a). For example, we can first weld the yellow, green, and cyan submeshes in Fig.~\ref{fig:partitions}(b) to obtain a large submesh, and weld the red and blue ones to obtain another large submesh. This simplifies the problem to the situation in Fig.~\ref{fig:partitions}(a). Besides, in case the given mesh contains multiple holes, one can further partition it so that each of the submeshes contains exactly one hole like the mesh shown in Fig.~\ref{fig:partitions}(a). Therefore, it suffices to focus on the case shown in Fig.~\ref{fig:partitions}(a) and develop a partial welding method for it. 

We now formulate the problem described above mathematically. Suppose $A,B\subset \bar{\mathbb{C}}$ are two Jordan domains with given orientations. Let $\gamma_A^1,\gamma_A^2 \subset \partial A$ be two disjoint arcs with the same orientation on $\partial A$ and $\gamma_B^1,\gamma_B^2 \subset \partial B$ be two disjoint arcs with the same orientation on $\partial B$. Suppose we are given two orientation-preserving homeomorphisms $f_1:\gamma_A^1 \rightarrow \gamma_B^1$ and $f_2:\gamma_A^2 \rightarrow \gamma_B^2$. The partial welding problem aims to find two conformal maps $\Phi_A: A\rightarrow \Omega$ and $\Phi_B: B\rightarrow \mathbb{C}\backslash \bar{\Omega}$ for some domain $\Omega$, with homeomorphic extensions to the closures, such that 
 \begin{equation}
     \Phi_A = \Phi_B\circ f_1\text{ on }\gamma_{A}^1 \ \text{ and } \Phi_A = \Phi_B\circ f_2\text{ on }\gamma_{A}^2.
 \end{equation}
 Recall that by Theorem~\ref{thm:sewing}, the closed welding problem is solvable if the given homeomorphism is quasi-symmetric on the real axis. To make use of this theorem, we extend the domain $A$ and $B$ to transform the problem to a closed welding problem. We have the following result:
 \begin{theorem}
 The above partial welding problem for multiply-connected domains can be solved by solving a closed welding problem with a suitable extension. In particular, one can extend $A$ and $B$ to two larger domains $\hat{A}$ and $\hat{B}$ and construct the maps $\Phi_A$ and $\Phi_B$ via $\hat{A}$ and $\hat{B}$. 
 \end{theorem}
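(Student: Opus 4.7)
The plan is to reduce the partial welding problem to the classical (closed) welding problem handled by the Sewing Theorem (Theorem~\ref{thm:sewing}). Since only two boundary arcs of $A$ and $B$ are subject to the identifications $f_1, f_2$, the free arcs of $\partial A$ and $\partial B$ are the obstruction to applying Theorem~\ref{thm:sewing} directly; my strategy is to make them disappear by genuinely enlarging $A$ and $B$ across those free arcs, so that the new boundaries $\partial \hat A$ and $\partial \hat B$ lie entirely in the welded portion.

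Concretely, let $\alpha_A^1, \alpha_A^2$ be the two complementary arcs of $\partial A \setminus (\gamma_A^1 \cup \gamma_A^2)$, so that $\partial A$ is traversed in the cyclic order $\gamma_A^1, \alpha_A^1, \gamma_A^2, \alpha_A^2$; define $\alpha_B^1, \alpha_B^2$ analogously on $\partial B$. I would attach two disjoint Jordan domains $E_1, E_2$ to $A$ along $\alpha_A^1, \alpha_A^2$ respectively, so that each $\alpha_A^i$ becomes an interior arc of the enlarged Jordan domain $\hat A := A \cup E_1 \cup E_2$; choosing the $E_i$ to be, e.g., half-disks keeps $\hat A$ a bounded Jordan domain whose boundary consists of $\gamma_A^1$, the free arc $\partial E_1 \setminus \alpha_A^1$, $\gamma_A^2$, and the free arc $\partial E_2 \setminus \alpha_A^2$, in cyclic order. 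The same construction applied to $B$ with auxiliary pieces $E_1', E_2'$, whose free arcs are chosen homeomorphic to those of $E_1, E_2$, yields a Jordan domain $\hat B \supset B$ with an analogous boundary decomposition.

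Next, extend the given data to a single homeomorphism $\hat f : \partial \hat A \to \partial \hat B$ by setting $\hat f = f_i$ on $\gamma_A^i$ for $i = 1, 2$ and defining $\hat f$ on the free arcs of $E_1, E_2$ to be any orientation-preserving homeomorphism onto the corresponding free arcs of $E_1', E_2'$ that matches $f_1, f_2$ continuously at the four endpoints of $\gamma_A^1 \cup \gamma_A^2$. Assuming $\hat f$ is quasi-symmetric, Theorem~\ref{thm:sewing} produces a Jordan curve $\hat J$ and conformal maps $\hat\Phi_A : \hat A \to \mathrm{int}(\hat J)$ and $\hat\Phi_B : \hat B \to \mathrm{ext}(\hat J)$, extending homeomorphically to the closures, with $\hat\Phi_A = \hat\Phi_B \circ \hat f$ on $\partial \hat A$. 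Setting $\Phi_A := \hat\Phi_A|_{\bar A}$, $\Phi_B := \hat\Phi_B|_{\bar B}$, and $\Omega := \hat\Phi_A(A)$, the welding identity restricted to $\gamma_A^1, \gamma_A^2 \subset \partial \hat A$ gives exactly $\Phi_A = \Phi_B \circ f_i$ on $\gamma_A^i$, which is the required conclusion.

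The main obstacle is guaranteeing the global quasi-symmetry of $\hat f$, since Theorem~\ref{thm:sewing} fails without it. The delicate behavior is at the four endpoints of $\gamma_A^1 \cup \gamma_A^2$, where $\hat f$ is patched together from $f_1$ or $f_2$ and the free-arc extension; a naïve matching can produce a mismatch in local scaling that destroys the quasi-symmetric bounds. I would handle this by exploiting the freedom in choosing $E_i$ and $E_i'$: pick them so that their free arcs meet $\gamma_A^i, \gamma_B^i$ at equal angles, and use affine (or, if needed, power-type) reparameterizations on the free arcs near the junctions, in the spirit of the basic two-fold map of the geodesic algorithm, to absorb any residual angular discrepancy. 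Since the auxiliary Jordan domains are entirely at our disposal, such a quasi-symmetric extension can always be arranged, completing the reduction of the partial welding problem to a closed welding problem on $\hat A$ and $\hat B$.
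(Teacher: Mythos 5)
Your proposal is correct and follows essentially the same route as the paper: both extend $A$ and $B$ across the non-welded boundary arcs (your attached domains $E_1,E_2$ play exactly the role of the regions cut off by the paper's auxiliary curves $\gamma_A^3,\gamma_A^4$ in $\mathbb{C}\setminus A$), extend $f_1,f_2$ to a single boundary homeomorphism, invoke the Sewing Theorem under a quasi-symmetry hypothesis, and restrict the resulting conformal maps back to $A$ and $B$. Your extra discussion of how to arrange quasi-symmetry at the junction points goes slightly beyond the paper, which simply assumes the transported homeomorphism is quasi-symmetric.
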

 \begin{proof}
 An illustration of the construction is given in Fig.~\ref{fig:partial}. Suppose the starting and ending points of $\gamma_A^1,\gamma_A^2,\gamma_B^1,\gamma_B^2$ are $a_1^1,a_1^2,b_1^1,b_1^2$ and $a_2^1,a_2^2,b_2^1,b_2^2$ respectively. Since $\mathbb{C}\backslash A$ is multiply-connected with 1 hole, we can find a curve $\gamma_A^3 \subset \mathbb{C}\backslash A$ connecting $a_2^1$ and $a_1^2$ and a curve $\gamma_A^4 \subset \mathbb{C}\backslash A$ connecting $a_1^1$ and $a_2^2$ such that $\gamma_A^3$ is not homotopic to $\gamma_A^4$ and $\gamma_A^3 \bigcap \gamma_A^4 = \emptyset$. We then take $\hat{A}$ to be the interior of $\gamma_A = \gamma_A^1 \bigcup \gamma_A^2 \bigcup \gamma_A^3 \bigcup \gamma_A^4$. Clearly, $A\subset \hat{A}$. Similarly, we extend $B$ to a larger domain $\hat{B}$. We also extend $f_1$ and $f_2$ to a homeomorphism $f:\partial \hat{A} \to \partial \hat{B}$ such that $\restr{f}{\gamma_A^1} = f_1$ and $\restr{f}{\gamma_A^2} = f_2$.
 
 We then find two conformal maps $\psi_{\hat{A}}: \hat{A}\to \mathbb{H}$ and $\psi_{\hat{B}}: \hat{B}\to \mathbb{C}\backslash\bar{\mathbb{H}}$, which extend continuously to homeomorphisms on the boundaries. Now, the composition map $\psi_{\hat{B}}\circ f \circ \psi_{\hat{A}}^{-1}$ is a homeomorphism from $\mathbb{R}$ to $\mathbb{R}$. By Theorem~\ref{thm:sewing}, if $\psi_{\hat{B}}\circ f \circ \psi_{\hat{A}}^{-1}$ is quasi-symmetric, we can find conformal maps $\phi_{\hat{A}}:\mathbb{H}\to \Omega$ and $\phi_{\hat{B}}:\mathbb{C}\backslash \bar{\mathbb{H}}\to \mathbb{C}\backslash \bar{\Omega}$ for some Jordan domain $\Omega$ such that $\phi_{\hat{A}}=\phi_{\hat{B}}\circ f$ on $\mathbb{R}$. Finally, we take $\Phi_A = \restr{\psi_{\hat{A}}\circ \phi_{\hat{A}}}{A}$ and $\Phi_B = \restr{\psi_{\hat{B}}\circ \phi_{\hat{B}}}{B}$. It is easy to see that $\Phi_A$ and $\Phi_B$ give the desired partial welding maps.
 \end{proof}
 
 \begin{figure}[t]
\centering
  \includegraphics[width=0.9\linewidth]{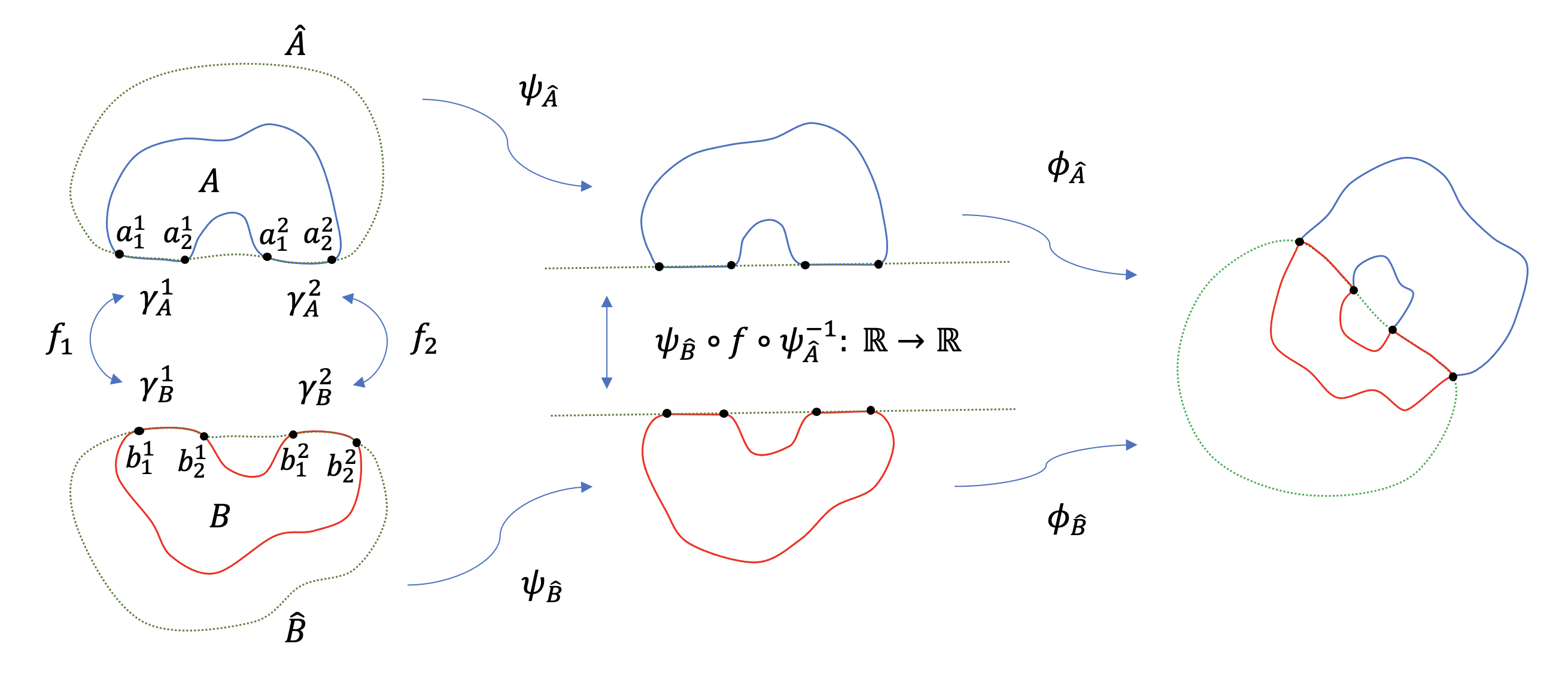}
\caption{The theoretical construction for partial welding for multiply-connected domains.}
\label{fig:partial}
\end{figure} 

In the discrete case, suppose we are given two set of consecutive boundary points $\partial A = \{a_0,\dots,a_r,\dots,a_s$, $\dots,a_t,\dots,a_m\}$ and $\partial B = \{b_0,\dots,b_r,\dots,b_s,\dots,b_t,\dots,b_n\}$, where $a_i$ corresponds to $b_i$ for $i = 1,\dots,r,s,\dots,t$, and $\{a_r,\dots,a_s\}$ and $\{b_r,\dots,b_s\}$ correspond to the inner boundary of the original mesh. This gives the correspondence functions $f_1:\gamma_A^1\rightarrow \gamma_B^1$ and $f_2:\gamma_A^2\rightarrow \gamma_B^2$, where $\gamma_A^1,\gamma_A^2,\gamma_B^1,\gamma_B^2$ are formed by  $\{a_0,\dots,a_r\},\{a_s,\dots,a_t\},\{b_0,\dots,b_r\},\{b_s,\dots,b_t\}$ respectively. Let $A$ and $B$ denote the polygons enclosed by $\partial A = \{a_0,\dots,a_r,\dots,a_s,\dots,a_t$, $\dots,a_m\}$ and $\partial B = \{b_0,\dots,b_r,\dots,b_s,\dots,b_t,\dots,b_n\}$ respectively. Our goal is to find conformal maps $\Phi_A$ and $\Phi_B$ such that $\Phi_A(\gamma_A^1) = (\Phi_B\circ f_1)(\gamma_A^1)$ and $\Phi_A(\gamma_A^2) = (\Phi_B\circ f_2)(\gamma_A^2)$. To compute the partial welding maps, we follow the idea of the theoretical construction. More specifically, we find auxiliary points $\{\bar{a}_1,\dots,\bar{a}_k\}$ and $\{\bar{b}_1,\dots,\bar{b}_k\}$ such that none of $\{\bar{a}_1,\dots,\bar{a}_k\}$ are contained in the polygon $A$, none of $\{\bar{b}_1,\dots,\bar{b}_k\}$ are contained in the polygon $B$, and $\{a_0,\dots,a_r,\bar{a}_1,\dots,\bar{a}_k,a_s,\dots,a_t$, $\dots,a_m\}$ and $\{b_0,\dots,b_r,\bar{b}_1,\dots,\bar{b}_k,b_s,\dots,b_t,\dots,b_n\}$ form two larger polygons with the length of each edge sufficiently small, respectively. We require that the length of edges of the new polygon are sufficiently small because it ensures a good approximation of the desired conformal map computed by the geodesic algorithm as described in~\cite{marshall2007convergence}. We then compute the desired partial welding maps $\Phi_A$ and $\Phi_B$ with the path correspondence between  $\{a_0,\dots,a_r,\bar{a}_1,\dots,\bar{a}_k,a_s,\dots,a_t\}$ and $\{b_0,\dots,b_r,\bar{b}_1,\dots,\bar{b}_k,b_s,\dots,b_t\}$. After that, we discard the polygon enclosed by $\Phi_A(\bar{a}_1),\dots,\Phi_A(\bar{a}_k)$, $\Phi_B(\bar{b}_1),\Phi_B(\bar{b}_k)\}$ to obtain the desired multiply-connected mesh. 

Note that there are various ways to find the auxiliary points. In most cases, we can choose them to be points on the straight lines between $a_r$ and $a_s$ and between $b_r$ and $b_s$, i.e.,
\begin{equation}
    \bar{a}_i = \frac{i}{k+1}a_r + (1-\frac{i}{k+1})a_s,
\end{equation}
and
\begin{equation}
    \bar{b}_i = \frac{i}{k+1}b_r + (1-\dfrac{i}{k+1})b_s,
\end{equation}
for $i = 1,\dots,k$. Another possible choice is the circular arc connecting $a_{r-1},a_r$, and $a_s$. Note that the straight lines between $a_r$ and $a_s$ and between $b_r$ and $b_s$ generally work well for the partial welding method. More specifically, suppose the original mesh is partitioned into two submeshes as shown in Fig.~\ref{fig:welding_multi}(a). The line connecting the starting and ending points $a_r$ and $a_s$ is in the inner hole of the mesh in most cases. As conformal maps and quasi-conformal maps with small $|\mu|$ tend to preserve the local geometry of the mesh, the line connecting $a_r$ and $a_s$ should lie outside the transformed submeshes if the distortion is small enough as shown in Fig.~\ref{fig:welding_multi}(b)--(c). The partial welding method can then be applied to weld the two submeshes as shown in Fig.~\ref{fig:welding_multi}(d). For some extreme cases where the straight lines do not lie outside the submeshes, we may apply some other path-finding algorithms such as~\cite{garcia1997path,yap2002grid} for getting the auxiliary points. The proposed partial welding method is summarized in Algorithm~\ref{alg:3}.

\begin{figure}[t]
\centering
\begin{subfigure}[t]{.48\textwidth}
\centering
\includegraphics[width=.45\linewidth]{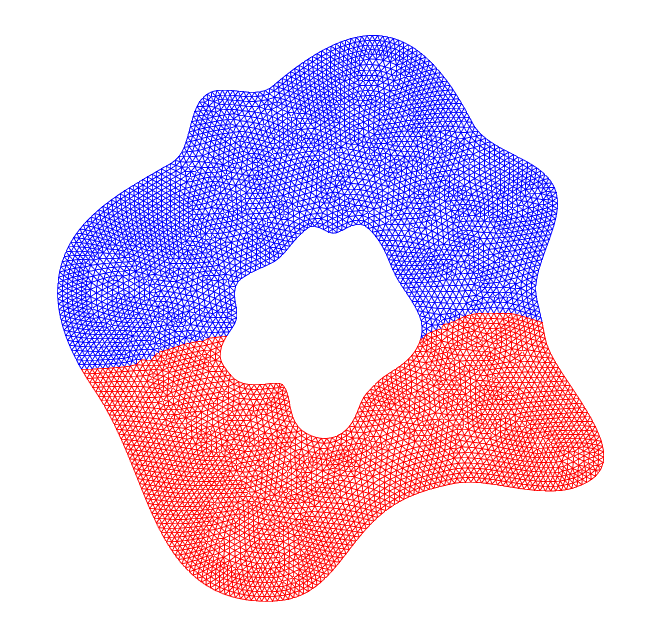}
\caption{The given mesh partitioned into 2 submeshes}
\end{subfigure}%
\hfill
\begin{subfigure}[t]{.48\textwidth}
\centering
\includegraphics[width=.45\linewidth]{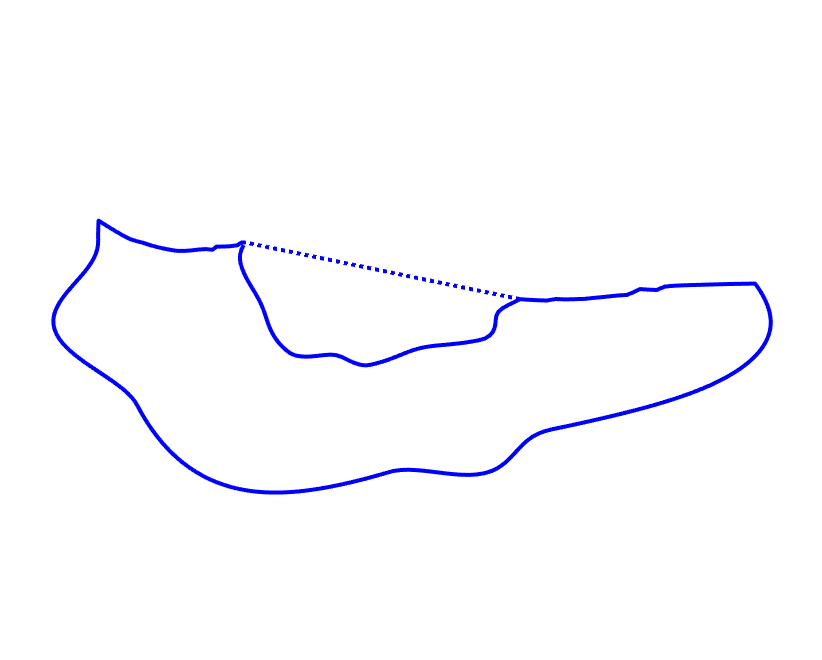}
\caption{Transformed blue submesh with auxiliary path}
\end{subfigure}
\begin{subfigure}[t]{.48\textwidth}
\centering
\includegraphics[width=.45\linewidth]{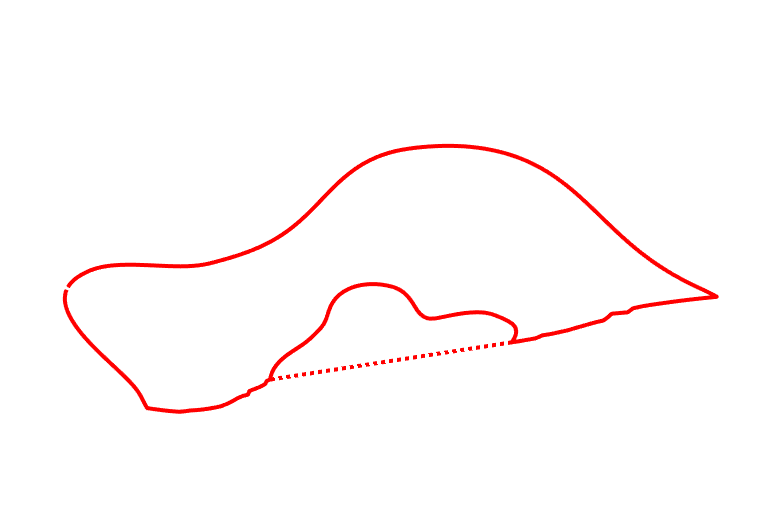}
\caption{Transformed red submesh with auxiliary path}
\end{subfigure}%
\hfill
\begin{subfigure}[t]{.48\textwidth}
\centering
\includegraphics[width=.45\linewidth]{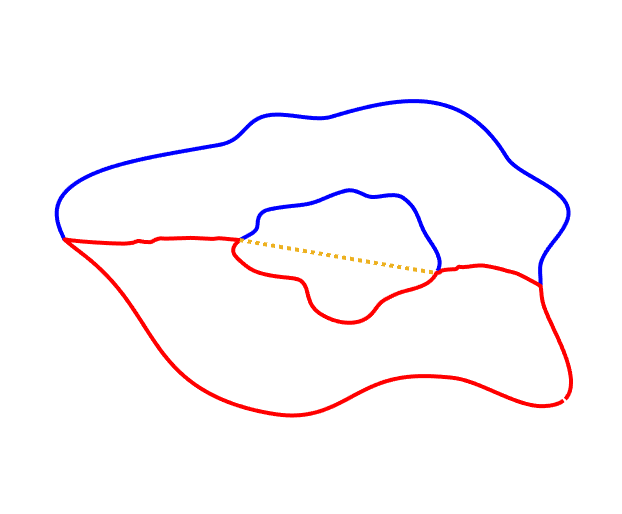}
\caption{The welded mesh}
\end{subfigure}
\caption{An illustration of the proposed partial welding method for multiply-connected meshes.}
\label{fig:welding_multi}
\end{figure}

\begin{algorithm2e}[h!]
\KwInput{Two sequences of boundary points $\partial A = \{a_0,\dots,a_r,\dots,a_s,\dots, a_t,\dots,a_m\}$ and $\partial B = \{b_0,\dots,b_r,\dots,b_s,\dots,b_t,\dots,b_n\}$, where $a_i$ are to be aligned with $b_i$ for $i=0,\dots,r,s,\dots,t$, and $a_r,\dots,a_s,b_r,\dots,b_s$ are taken from the inner boundaries.}
\KwOutput{Conformally transformed points $\{\tilde{A_0},\dots,\tilde{A}_r,\dots,\tilde{A}_s,\dots,\tilde{A}_t,\dots,\tilde{A}_m\}$ and $\{\tilde{B_0},\dots,\tilde{B}_r,\dots,\tilde{B}_s,\dots,\tilde{B}_t,\dots,\tilde{B}_m\}$ such that $\tilde{A}_i = \tilde{B}_i$ for $i=0,\dots,r,s$, $\dots,t$, and the transformed points form a multiply-connected polygon.}
Find auxiliary points $\bar{a}_1,\dots,\bar{a}_k$ and $\bar{b}_1,\dots,\bar{b}_k$ such that they are not in the polygons $A$ and $B$ respectively. Also, $\{a_0,\dots,a_r,\bar{a}_1,\dots,\bar{a}_k,a_s,\dots,a_t,\dots,a_m\}$ and $\{b_0,\dots,b_r,\bar{b}_1,\dots,\bar{b}_k,b_s,\dots,b_t,\dots,b_n\}$ form two larger polygons; \\
Apply the partial welding algorithm (Algorithm~\ref{alg:2}) with path correspondence $\{a_0,\dots$, $a_r,\bar{a}_1,\dots,\bar{a}_k,a_s,\dots,a_t\}$ and $\{b_0,\dots,b_r,\bar{b}_1,\dots,\bar{b}_k,b_s,\dots,b_t\}$ to update $\partial A $ and $\partial B$;\\
The new coordinates of $\partial A$ and $\partial B$ give the desired map.
\caption{Partial welding for multiply-connected meshes} 
\label{alg:3}
\end{algorithm2e}

\begin{figure}[t]
\centering
\begin{subfigure}[t]{.5\textwidth}
\centering
\includegraphics[width=.5\linewidth]{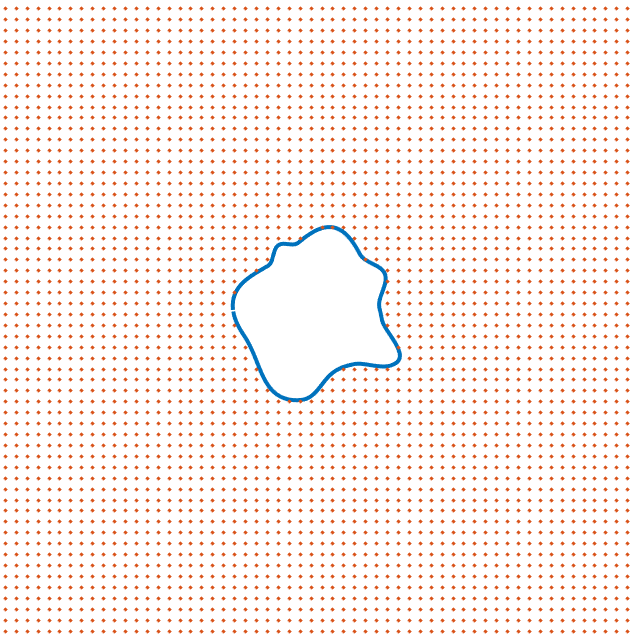}
\caption{The original curve and its exterior}
\end{subfigure}%
\begin{subfigure}[t]{.5\textwidth}
\centering
\includegraphics[width=.5\linewidth]{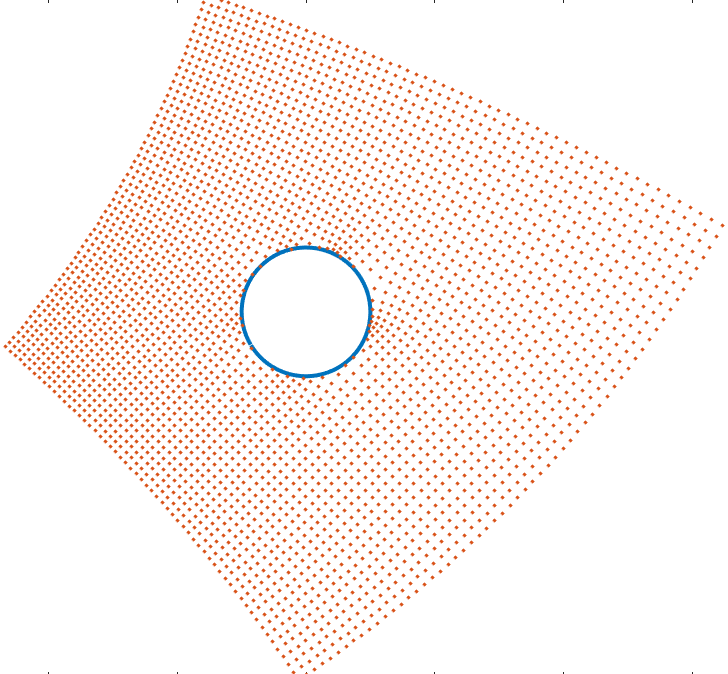}
\caption{The computed conformal map}
\end{subfigure}
\begin{subfigure}[t]{.5\textwidth}
\centering
\includegraphics[width=.5\linewidth]{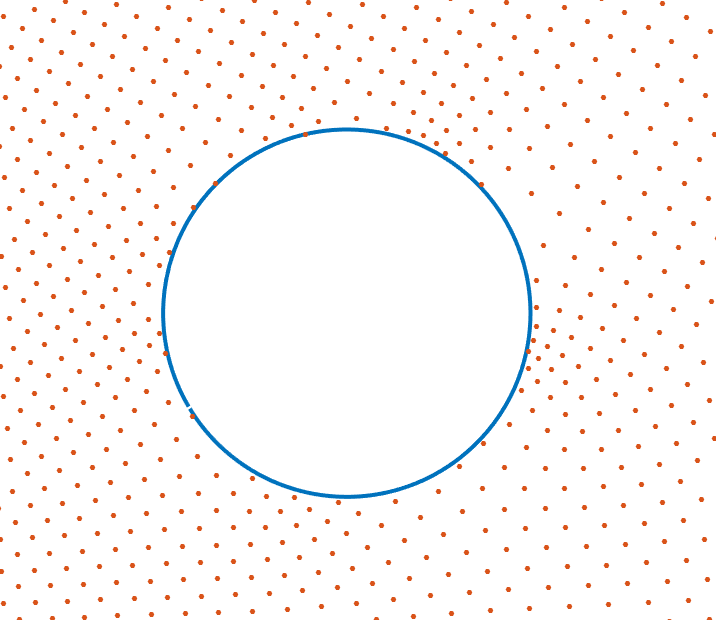}
\caption{Zoom-in of the region near the circle}
\end{subfigure}%
\begin{subfigure}[t]{.5\textwidth}
\centering
\includegraphics[width=.5\linewidth]{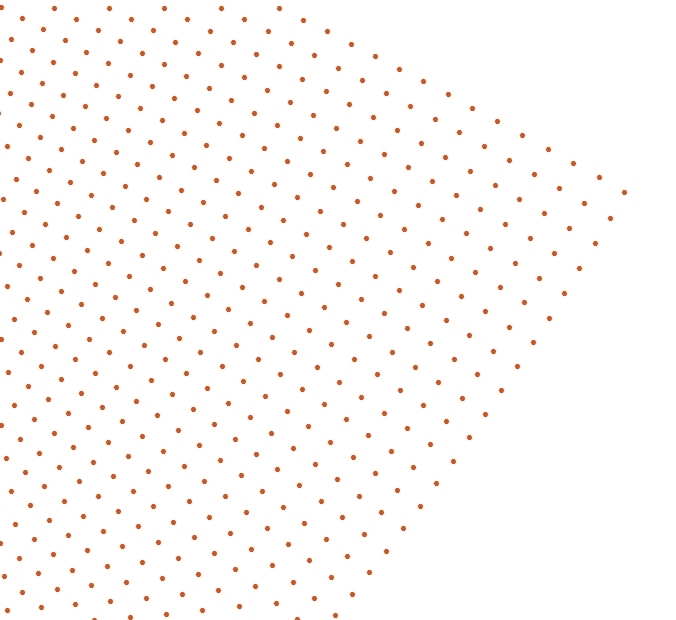}
\caption{The map causes a rotation far away from the circle}
\end{subfigure}%
\caption{The conformal map from the exterior of a curve to the exterior of a circle computed using the geodesic algorithm.}
\label{fig:normalization}
\end{figure}

\subsection{Parallel Koebe's iteration}
\label{sec:4.5}
As introduced in Section~\ref{sec:3.5}, when we perform the Koebe's iteration for a domain $R$ whose complements are $K_1,K_2,\dots,K_n$, in each iteration, we normalize the iteration map $f_j$ at $\infty$ such that $f_j(z) = z + O(\frac{1}{z})$. This plays an essential role for ensuring the convergence of the Koebe's iteration. Intuitively, with $f_j$ normalized at $\infty$, it only changes the region near $K_j$ while being close to the identity map (possibly with a rotation) locally for points far away from $K_j$. Moreover, suppose in the $(j-1)$-th iteration we have transformed the inner boundary of $K_{j-1}$ to a circle. Then, in the $j$-th iteration, the transformed inner boundary will still be similar to a circle if $K_{j-1}$ is far away from $K_j$. Computationally, the normalization step is incorporated as the last step of the geodesic algorithm as in~\cite{marshall2007convergence}. Fig.~\ref{fig:normalization} shows an example of the effect, from which it can be observed that the region near the curve is significantly changed under the map while the region far away from it is only rotated but not distorted locally. This motivates us to design a parallelizable version of the Koebe's iteration method for our parallel quasi-conformal parameterization problem.

\begin{figure}[t]
\centering
\begin{subfigure}[t]{.5\textwidth}
\centering
\includegraphics[width=.6\linewidth]{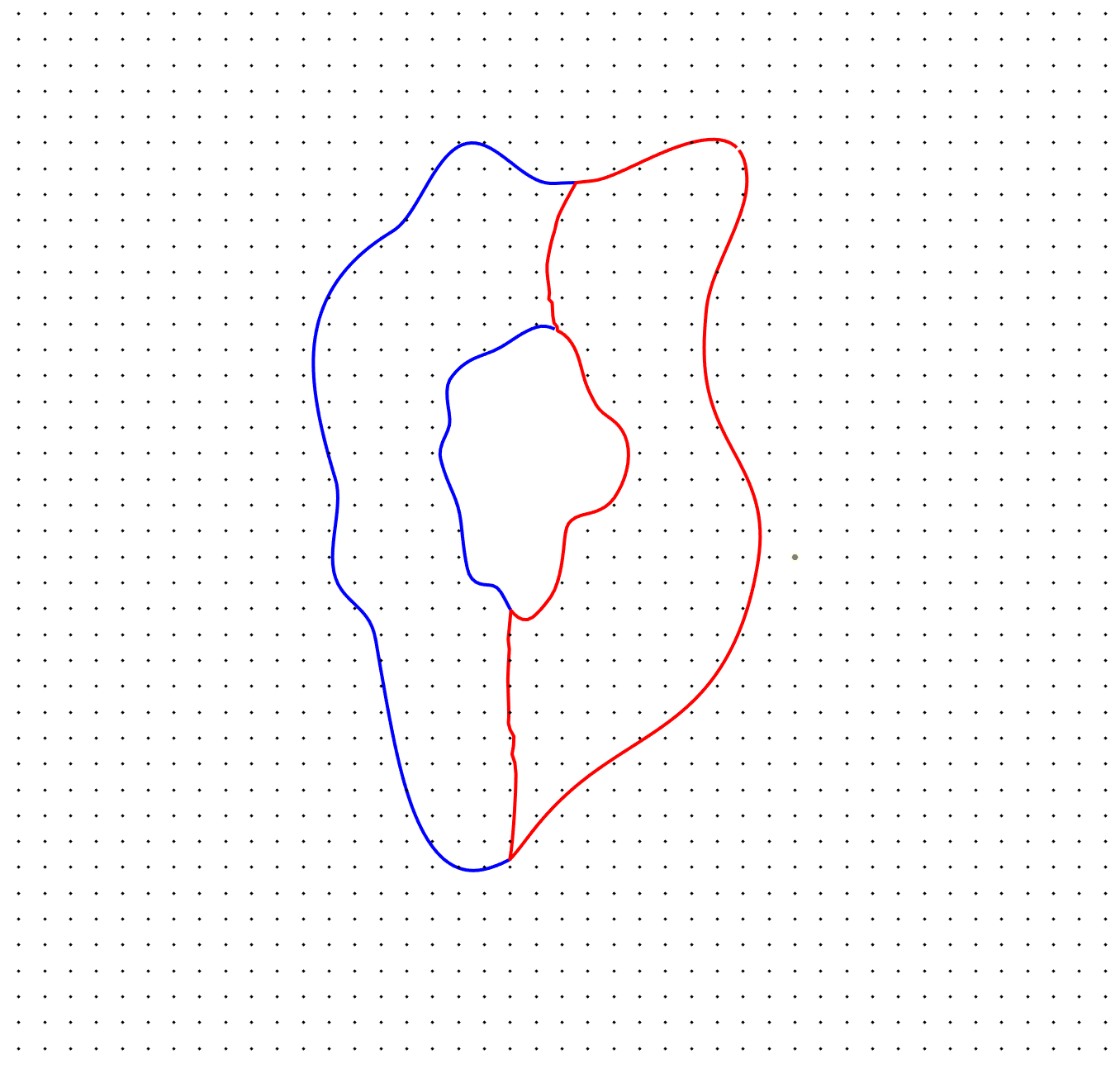}
\caption{Before the transformation}
\end{subfigure}%
\begin{subfigure}[t]{.5\textwidth}
\centering
\includegraphics[width=.6\linewidth]{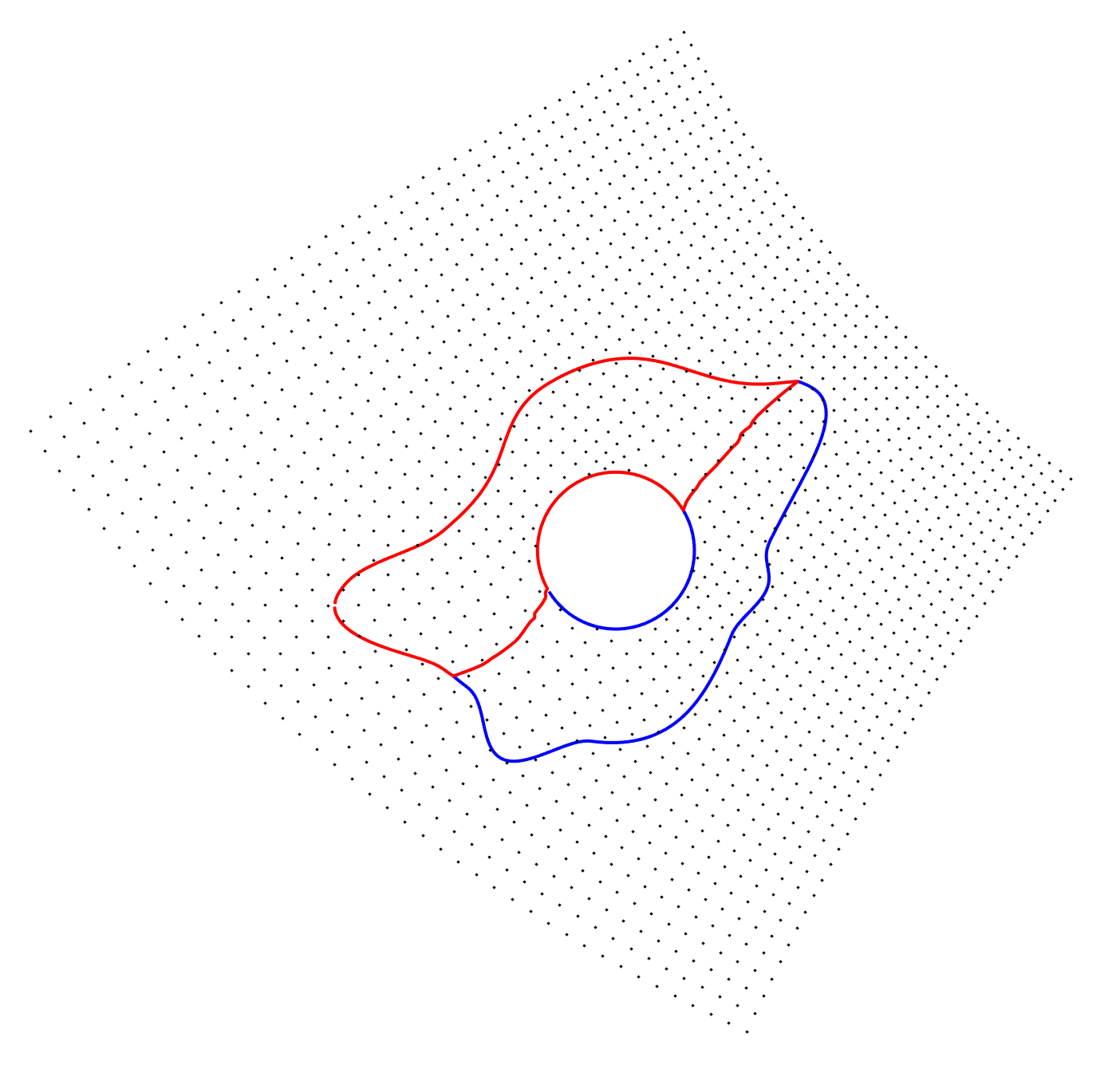}
\caption{After the transformation}
\end{subfigure}%
\caption{Transforming the inner boundary of a one-hole submesh $\mathcal{S}_j$ to a circle using the geodesic algorithm under normalization.}
\label{fig:koebe_1hole}
\end{figure}

Let $\mathcal{S}$ be the input multiply-connected mesh with exactly $k$ inner holes. Suppose $\mathcal{S}$ is partitioned into $m$ submeshes $\mathcal{S}_1, \dots, \mathcal{S}_m$, where each of them is either simply-connected or multiply-connected with one inner hole, and the free-boundary quasi-conformal parameterizations of them obtained using Algorithm~\ref{alg:free} are $\varphi_1, \dots, \varphi_m$. Instead of merging all flattened submeshes and performing the traditional Koebe's iteration on the entire mesh directly, we apply the geodesic algorithm to weld some of the submeshes so that each of the $k$ holes is contained in one of $\varphi_j(\mathcal{S}_j)$ or a welded larger subdomain. We then transform the inner boundary $\mathcal{H}_j$ of each subdomain $\varphi_j(\mathcal{S}_j)$ (or a welded larger subdomain) to a circle in parallel. More explicitly, we find a normalized conformal map $\Phi_j: \mathbb{C}\backslash \mathcal{H}_j \to \mathbb{C}\backslash B(0,1)$ satisfying $\Phi_j(\partial\mathcal{H}_j) = S(0,1), \Phi_j(\infty) = \infty$ and $\Phi_j(a_0) = 0$ for some point $a_0$ on $\partial\mathcal{H}_j$, where $B(0,1)$ denotes the unit ball and $S(0,1)$ denotes the unit circle. All the boundary points and welding paths related to $\varphi_j(\mathcal{S}_j)$ should be updated. Fig.~\ref{fig:koebe_1hole} shows an example of the transformation. Since the transformations of all $\mathcal{H}_j$ to circles are independent, in practice they can be computed by different processors in a parallel manner. After computing all transformations, we obtain the updated boundaries of the submeshes $\tilde{\mathcal{S}}_1,\dots, \tilde{\mathcal{S}}_k, \dots$, which are either simply-connected or multiply-connected with 1 circular hole. We can then perform the remaining welding steps for getting the entire boundaries. Fig.~\ref{fig:parallelkoebe} shows an example of the computation, in which we handle the two submeshes with 1 hole in (a) and (b) in parallel to get the results in (c) and (d), and then weld them to get the result in (e). Note that the normalization step of partial welding tends to preserve the circular shapes of all inner boundaries. As for the outer boundary of the entire mesh, we can apply the geodesic algorithm to transform it to a circle after all the welding steps. This completes our parallel Koebe's iteration method. We remark that the interior of the submeshes does not need to be updated throughout the process as we will only utilize the boundary points of them for computing the desired parameterization later.

\begin{figure}[t]
\centering
\begin{subfigure}[t]{.48\textwidth}
\centering
\includegraphics[width=.5\linewidth]{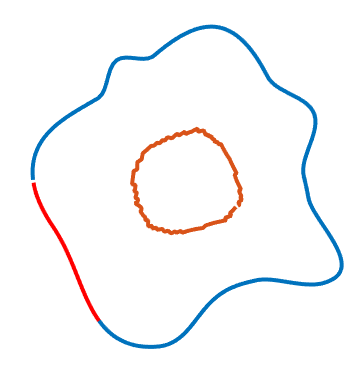}
\caption{The first multiply-connected submesh}
\label{fig:parallelkoebe1}
\end{subfigure}%
\hfill   
\begin{subfigure}[t]{.48\textwidth}
\centering
\includegraphics[width=.5\linewidth]{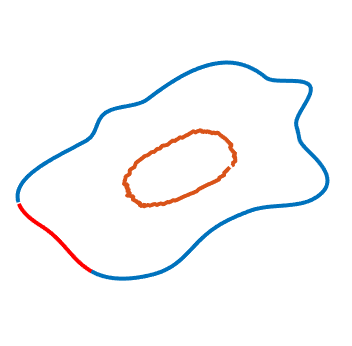}
\caption{The second multiply-connected submesh}
\end{subfigure}
\begin{subfigure}[t]{.48\textwidth}
\centering
\includegraphics[width=.5\linewidth]{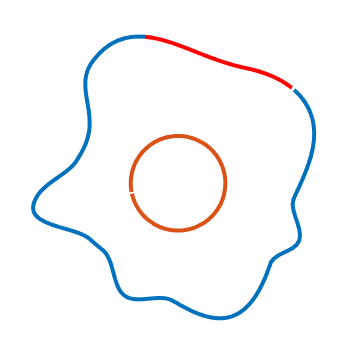}
\caption{Transforming the inner boundary of (a) to a circle}
\label{fig:parallelkoebe3}
\end{subfigure}
\hfill   
\begin{subfigure}[t]{.48\textwidth}
\centering
\includegraphics[width=.5\linewidth]{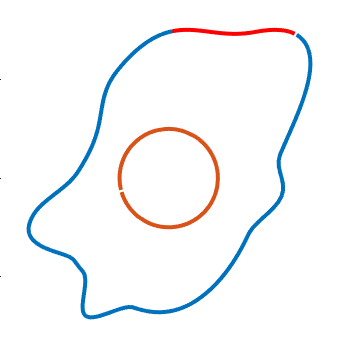}
\caption{Transforming the inner boundary of (b) to a circle}
\label{fig:parallelkoebe4}
\end{subfigure} \\
\begin{subfigure}[t]{.7\textwidth}
\centering
\includegraphics[width=.7\linewidth]{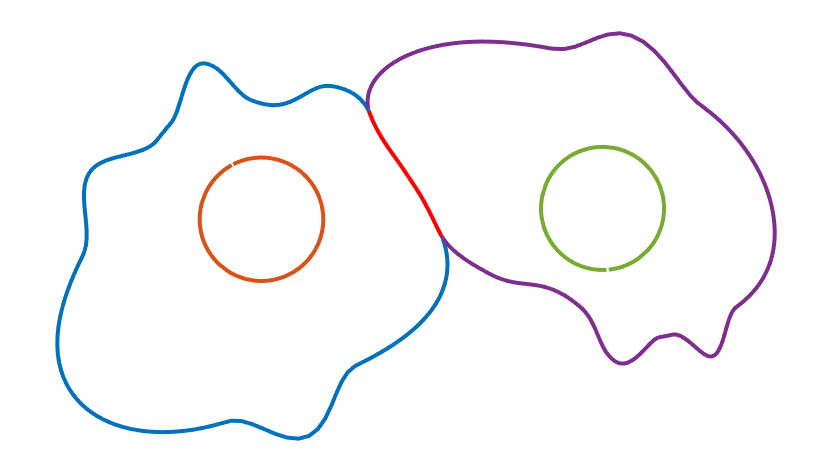}
\caption{The inner boundaries remain close to circles after partial welding}
\label{fig:parallelkoebe5}
\end{subfigure}%
\caption{An illustration of the parallel Koebe's iteration.}
\label{fig:parallelkoebe}
\end{figure}

To see the advantage of the proposed parallel Koebe's iteration, note that the complexity of the geodesic algorithm is $O(n_b n_t)$, where $n_b$ is the number of boundary points that determine the map and $n_t$ is the total number of points we want to update using the map. We now consider the computational cost of the traditional (non-parallel) Koebe's iteration method and our proposed parallel Koebe's iteration. To transform the inner boundary of each subdomain with 1 hole to a circle using the geodesic algorithm, the number $n_b$ is the number of boundary points of such subdomain in both versions of the Koebe's iteration. As for the number $n_t$, if we perform the traditional Koebe's iteration on the entire welded shape, $n_t$ will be the total number of boundary points of the submeshes. By contrast, in the parallel Koebe's iteration, we only need to update the new coordinates of the boundary points of each submesh in parallel, hence $n_t$ is just the number of boundary points of each submesh for computing each map. Therefore, the computational cost can be greatly reduced in the parallel Koebe's iteration by utilizing multiple processors, especially if the mesh $\mathcal{S}$ has many inner boundaries.

We remark that the parallel Koebe's iteration method is developed based on the observation that the transformed circles remain close to circles under the subsequent welding maps, and hence the result obtained by the algorithm is only an approximation of the desired Riemann mapping in theory. To ensure that all boundaries are perfectly circular, one needs to repeat the Koebe's iteration for infinitely many times so that the result will converge to the desired Riemann mapping as guaranteed by Theorem~\ref{thm:Koebe}. Nevertheless, in practice we find that the results produced by the proposed parallel Koebe's iteration algorithm without repeating the iterations are already satisfactory, with all holes being very close to perfect circles. In case the precision of the circularity of the holes is required to be particularly high, one can repeat the iterations for several times to further improve the circularity.

\subsection{Obtaining the global quasi-conformal parameterization}
After getting the updated boundary conditions for all submeshes from the above procedures, we compute the desired free-boundary quasi-conformal parameterization of each of them. Suppose the boundary of each initially flattened submesh $\varphi_i(\mathcal{S}_i)$ is $\mathcal{B}_i$, where $i=1,\dots,m$. After the steps of partial welding and parallel Koebe's iteration, we obtain the updated boundary $\tilde{\mathcal{B}}_i$ for each submesh. Now, we use the updated boundary to obtain the desired quasi-conformal parameterization for each submesh. Since a conformal map is harmonic, we can solve the Laplace equation for each flattened subdomain $\varphi_i(\mathcal{S}_i)$ to obtain a conformal map $\Phi_i: \varphi_i(\mathcal{S}_i) \to \mathbb{R}^2$:
\begin{equation}
    \Delta \Phi_i = 0,\ \Phi_i|_{\mathcal{B}_i} = \tilde{\mathcal{B}}_i.
\end{equation}
Note that since the computation for each submesh is independent, this step is highly parallelizable. We can further reduce the quasi-conformal distortion of each $\Phi_i$ by composing $\Phi_i$ with a quasi-conformal map with Beltrami coefficient computed using the composition formula \ref{eq:3} as suggested by~\cite{choi2015fast}. Note that since $\Phi_i$ is conformal, the Beltrami coefficient of the composition map $\Phi_i \circ \varphi_i:\mathcal{S}_i \to \mathbb{R}^2$ is the same as that of $\varphi_i$, which is obtained based on the input Beltrami coefficient $\mu$. In other words, this step of solving the Laplace equation for each submesh ensures the consistency of the boundaries of all submeshes without affecting their quasi-conformality. Finally, all mapping results $\Phi_i \circ \varphi_i$, $i = 1, \dots, m$ together form the desired global quasi-conformal parameterization $\Phi:\mathcal{S} \to \mathbb{R}^2$, with the Beltrami coefficient of $\Phi$ being $\mu$ and all $k$ holes of $\Phi(\mathcal{S})$ very close to circles. Our parallelizable global quasi-conformal mapping (PGQCM) method for multiply-connected surfaces is summarized in Algorithm~\ref{alg:4}. 

\begin{algorithm2e}[h!]
\KwInput{A multiply-connected surface mesh $\mathcal{S}=(\mathcal{V},\mathcal{F})$ with $k$ inner holes, and a prescribed Beltrami coefficient $\mu$, and a partition of $\mathcal{S}$ into $m$ submeshes $\mathcal{S}_i = (\mathcal{V}_i,\mathcal{F}_i)$, $i = 1, \dots, m$.}
\KwOutput{A global quasi-conformal parameterization $\Phi:\mathcal{S}\to \mathbb{R}^2$.}

\For{$i=1,\dots,m$}
{
Apply Algorithm~\ref{alg:free} to obtain the free-boundary parameterization $\varphi_i: \mathcal{S}_i \to \mathbb{R}^2$\;
}
Perform partial welding on $\varphi_i(\mathcal{S}_i)$ using Algorithm~\ref{alg:3} to get $\tilde{\mathcal{S}}_j, j=1,\dots,k,\dots$, such that each of them is simply-connected or with only one hole and each of the $k$ holes is contained in one of $\tilde{\mathcal{S}}_j$;\\
Apply the geodesic algorithm to transform the inner boundaries of all 1-hole submeshes to circles;\\
Perform partial welding to ensure the consistency of all boundaries of the submeshes\;
Apply the geodesic algorithm to transform the outer boundary to a circle\;
(Optional) Further perform the Koebe's iteration to improve the circularity of the inner holes\;
\For{$i=1,\dots,m$}
{
Solve the Laplace equation $\Delta {\Phi}_i = 0$ with the updated boundary conditions for $\varphi_i(\mathcal{S}_i)$\;
(Optional) Compose the map with a quasi-conformal map to further reduce the quasi-conformal distortion\;
}
The maps $\Phi_i \circ \varphi_i: \mathcal{S}_i \to \mathbb{R}^2$, $i = 1, \dots, m$ together form the desired map $\Phi:\mathcal{S}\to \mathbb{R}^2$\;
\caption{Parallelizable global quasi-conformal mapping for multiply-connected surfaces (PGQCM)}
\label{alg:4}
\end{algorithm2e}

The convergence of the PGQCM algorithm is guaranteed by the following theorem:
\begin{theorem}
Let $\mathcal{S}$ be a multiply-connected open surface and $\mu$ be a prescribed Beltrami coefficient. If the Koebe's iteration is repeated for infinitely many times, the map $\Phi:\mathcal{S}\to \mathbb{R}^2$ obtained by the PGQCM algorithm converges to the quasi-conformal Riemann mapping in Theorem~\ref{thm:qcm}, where all boundaries of $\Phi(\mathcal{S})$ circular and the Beltrami coefficient of $\Phi$ is $\mu$.
\end{theorem}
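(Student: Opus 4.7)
The plan is to verify the two required properties separately, namely (i) the Beltrami coefficient of $\Phi$ equals $\mu$, and (ii) the image $\Phi(\mathcal{S})$ is a circular domain, and then invoke Theorem~\ref{thm:qcm} to conclude uniqueness up to normalization.

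For (i), I would track the Beltrami coefficient through every stage of Algorithm~\ref{alg:4}. By the construction in Algorithm~\ref{alg:free}, each free-boundary map $\varphi_i = \varphi_i^{qc}\circ\varphi_i^c:\mathcal{S}_i\to\mathbb{R}^2$ satisfies $\mu_{\varphi_i}=\mu|_{\mathcal{S}_i}$ because $\varphi_i^c$ is conformal and $\varphi_i^{qc}$ is constructed to realize $\mu$ on the chart $\varphi_i^c(\mathcal{S}_i)$. The subsequent operations—the intermediate form maps and welding maps of Algorithm~\ref{alg:2}, the geodesic algorithm used to turn each inner hole into a circle, and finally the solution of the Laplace equation $\Delta\Phi_i=0$ on each flattened subdomain—are all conformal (the Laplace solution is conformal because it is harmonic with boundary values coming from a conformal welding, so a standard elliptic regularity argument together with the fact that the welded boundary correspondence is already the restriction of a global conformal map gives harmonic conjugacy). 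Applying the composition formula~\eqref{eq:3} with $\mu_g=0$ at every step, we get $\mu_{\Phi_i\circ\varphi_i}=\mu_{\varphi_i}=\mu|_{\mathcal{S}_i}$, and patching across $i$ yields $\mu_\Phi=\mu$ globally, where the patching is legitimate because the partial welding guarantees that adjacent $\Phi_i\circ\varphi_i$ agree continuously along the shared arcs.

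For (ii), I would argue that if the Koebe iteration in the optional step of Algorithm~\ref{alg:4} is repeated infinitely often, then Theorem~\ref{thm:Koebe} applies. Concretely, after the initial round of the parallel Koebe iteration and the welding, all the inner holes are already \emph{close} to circles, but not exactly circular because subsequent welding maps, being only approximately close to the identity away from their action region, perturb the previously circularized holes. Each further Koebe sweep $h_k$ is normalized at $\infty$, so after $k$ sweeps Theorem~\ref{thm:Koebe} provides $|g_k(w)-w|\leq\gamma\mu^{4[k/n]}$ on the target circular domain, which forces the iterates $f_k$ to converge uniformly on compacta to a conformal map $F$ onto a circular domain. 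Composing $F$ with the (already constructed) map having Beltrami coefficient $\mu$ yields a limit map $\Phi_\infty$ that is quasi-conformal with Beltrami coefficient $\mu$ and whose image is a circular domain with the outer boundary being a circle (the latter being ensured by the final geodesic-algorithm step applied to the outer boundary).

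Finally, I would invoke Theorem~\ref{thm:qcm} to identify $\Phi_\infty$ with the unique quasi-conformal Riemann mapping: both maps have the same Beltrami coefficient $\mu$, both send $\mathcal{S}$ onto a canonical circular domain, and after the same normalization (the three-point choice used in partial welding plus the two fixed boundary points used in Algorithm~\ref{alg:free}) they coincide. The main obstacle is justifying rigorously that the \emph{parallel} Koebe step, carried out before the last welding, still fits into the hypotheses of Theorem~\ref{thm:Koebe}: one has to verify that processing disjoint holes simultaneously can be reinterpreted as a finite initial segment of a sequential Koebe iteration on the welded domain, with the intervening welding maps absorbed into the normalization at $\infty$. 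I would handle this by showing that each parallel circularization map commutes up to conformal normalization with the welding maps acting on disjoint boundary components, so that the overall sequence of maps is still a sequence of Koebe iterates on the final welded circular target; the convergence estimate of Theorem~\ref{thm:Koebe} then transfers directly and completes the proof.
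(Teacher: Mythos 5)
Your proposal is correct in substance and reaches the same two conclusions as the paper, but it is considerably more detailed and differs from the paper's argument on one of the two points. For the circularity of the boundaries, both you and the paper appeal to Theorem~\ref{thm:Koebe}; your extra worry about reconciling the \emph{parallel} Koebe step with the sequential iteration is more machinery than is needed, because the hypothesis ``repeated for infinitely many times'' refers to the optional standard (sequential) Koebe iteration performed on the already-welded global domain, and Theorem~\ref{thm:Koebe} applies to that iteration directly regardless of which multiply-connected starting domain the parallel step hands it --- the parallel step only supplies a good initial configuration. For the Beltrami coefficient, you argue \emph{exact} preservation by tracking conformality through every post-$\varphi_i$ stage (welding, geodesic maps, and the harmonic extension, the last being conformal because its boundary data are the trace of a conformal map) and applying Equation~\eqref{eq:3} with $\mu_g=0$ throughout; the paper instead concedes that the computed map may carry a residual error in its Beltrami coefficient and removes it by composing with a corrective quasi-conformal map built from the composition formula, following the approach of the fast disk conformal map paper it cites. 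Your route is cleaner in the idealized continuous setting, while the paper's correction step is the more robust statement for the discrete algorithm, where the piecewise-linear Laplace solve and the polygonal welding are only approximately conformal; it would strengthen your write-up to acknowledge that the conformality of the harmonic extension holds exactly only in the continuum and that a final correction by composition handles the discrete residual.
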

\begin{proof}
Note that the circularity of the holes is ensured by the convergence of the Koebe's iteration in Theorem~\ref{thm:Koebe}. Also, the error in the Beltrami coefficient of the map can be corrected by composing a quasi-conformal map using the composition formula as introduced in~\cite{choi2015fast} and hence one can ensure that the Beltrami coefficient of $\Phi$ is equal to the prescribed $\mu$.
\end{proof}

Altogether, the novel combination of the free-boundary local parameterization of the submeshes, the partial welding method, and the parallel Koebe's iteration significantly improves the computational efficiency of the global quasi-conformal parameterization of multiply-connected surfaces. Furthermore, for some very dense meshes, traditional global parameterization methods may fail due to insufficient memory size of the computing machines for solving extremely large systems of equations. By contrast, our proposed PGQCM algorithm can effectively handle any dense mesh as it does not require solving any equations for the global mesh. Instead, we can partition the input mesh into multiple submeshes such that each of them is small enough for the computing machine to compute the free-boundary quasi-conformal parameterization. After that, we can perform the partial welding and the parallel Koebe's iteration to weld and update the boundaries, and finally solve the Laplace equation for each submesh with the updated boundary conditions to get the desired mapping for each of them, thereby yielding the desired global quasi-conformal parameterization of the dense mesh. Note that the accuracy of the welding maps is theoretically ensured as described in~\cite{marshall2007convergence}. In the experiments presented in the following section, one can see that the proposed PGQCM method is not only more efficient but also more accurate than the existing methods, especially for dense meshes.

\section{Experiments}
\label{sec:5}
Our proposed algorithm is implemented in MATLAB, with the MATLAB Parallel Computing Toolbox used for performing the parallel computation in our algorithm. The sparse linear systems are solved using the backslash operator ($\backslash$) in MATLAB. The numerical calculations are done using the default 16-digit precision in MATLAB. All experiments are performed on a MacBook Pro with 2.3 GHz 8-Core Intel Core i9 CPU and 16 GB RAM. Various synthetic and real multiply-connected mesh models from~\cite{RiemannMapper,choi2021efficient} are used for assessing the performance of our proposed algorithm.

\subsection{Error estimate}
For a given multiply-connected surface $\mathcal{S} = (\mathcal{V},\mathcal{F})$ and a prescribed piecewise constant Beltrami coefficient $\mu$ defined on each face of $\mathcal{S}$, let $\Phi:\mathcal{S} \to \mathbb{C}$ be the computed quasi-conformal parameterization. We first measure the error $e_T$ between the Beltrami coefficient $\mu_{\Phi}$ of the resulting map $\Phi$ and the ground truth Beltrami coefficient $\mu$ on each face $T \in \mathcal{F}$:
\begin{equation}
    e_T = (\mu_{\Phi} - \mu)|_{T},
\end{equation}
where $\mu_{\Phi}|_T$ is the Beltrami coefficient of the linear map from $T = [v_i,v_j,v_k]$ to $\Phi(T) = [\Phi(v_i),\Phi(v_j),\Phi(v_k)]$. We can then compute the mean absolute error
\begin{equation}
    e = \underset{T\in\mathcal{F}}{\text{mean}}\; \abs{e_T}.
\end{equation}
Note that we do not adopt the relative error $\frac{|e_T|}{|\mu|_T|}$ or $\frac{|e_T|}{\text{mean}(|\mu|)}$ here because if the ground truth $\mu$ is identically zero, i.e., the desired parameterization is conformal, then the relative error will be $\infty$ no matter how small $e_T$ is and hence is not a good measure. Since the Beltrami coefficient effectively captures the local geometric distortion of the parameterization, a small mean absolute error $e$ indicates that the conformality distortion between the desired parameterization and the computed parameterization is very small. Mathematically, this can be seen from the composition formula of Beltrami coefficients in Equation~\eqref{eq:3}. If we denote the ground truth quasi-conformal map as $\Psi:\mathcal{S}\to \mathbb{C}$, then we have
\begin{equation}
    \mu_{\Psi^{-1}}(\Psi(z_0)) = -\mu_{\Psi}(z_0)\dfrac{\Psi_{z}(z_0)}{\bar{\Psi}_{z}(z_0)},
\end{equation}
and
\begin{equation}
    \mu_{\Phi\circ\Psi^{-1}}(\Psi(z_0))
    = \dfrac{\mu_{\Psi^{-1}}+(\mu_{\Phi}\circ\Psi^{-1})\frac{\bar{\Psi}^{-1}_{z}}{\Psi^{-1}_{z}}}{1+\bar{\mu}_{\Psi^{-1}}(\mu_{\Phi}\circ\Psi^{-1})\frac{\bar{\Psi}^{-1}_z}{\Psi^{-1}_z}}\big(\Psi(z_0)\big) 
    = \dfrac{\Psi_{z}(z_0)(\mu_{\Phi}(z_0)-\mu_{\Psi}(z_0))}{\bar{\Psi}_{z}(z_0)\big(1-\bar{\mu}_{\Psi}(z_0)\mu_{\Phi}(z_0)\big)}.
\end{equation}
Consequently, we have
\begin{equation}
    \abs{\mu_{\Phi\circ\Psi^{-1}}(\Psi(z_0))} \leq C\abs{\mu_{\Phi}(z_0)-\mu_{\Psi}(z_0)},
\end{equation}
where $C$ is a constant depending on $\Phi$ and $\Psi$. This shows that when $\mu_{\Phi}$ and $\mu_{\Psi}$ are close enough, the composition map $\Phi\circ\Psi^{-1}$ is close to conformal, and hence the errors $e_T$ and $e$ we adopt are good measurements of the error in the Beltrami coefficients.

\subsection{Example 1: A multiply-connected face mesh with 2 inner holes}
We first test our proposed PGQCM algorithm on a multiply-connected face mesh with 2 inner holes as shown in Fig.~\ref{fig:Ex1}. Fig.~\ref{fig:Ex1}(a) show the face mesh partitioned into four submeshes. We compute the free-boundary quasi-conformal parameterization for each mesh and then perform the partial welding and Koebe's iteration. Specifically, by welding the boundaries of the red submesh and the blue submesh together based on their partial correspondence and then transforming the inner boundary of the welded mesh to a circle, we obtain the updated boundary conditions as shown in Fig.~\ref{fig:Ex1}(b). Similarly, we obtain the updated boundary conditions of the green submesh and the magenta submesh as shown in Fig.~\ref{fig:Ex1}(c) by welding them together and transforming the inner boundary to a circle. After that, we weld the two welded shapes in Fig.~\ref{fig:Ex1}(b)--(c) according to their boundary correspondence and obtain the updated boundary conditions in Fig.~\ref{fig:Ex1}(d). It can be observed that the two inner holes remain close to circles, which demonstrates the efficacy of our parallel Koebe's iteration. Now, since the outer boundary of the updated shape is not circular, we perform the geodesic algorithm to transform it to a circle with other points lying inside it as shown in Fig.~\ref{fig:Ex1}(e). Finally, with the updated boundary conditions, we can compute the quasi-conformal parameterization for each submesh and obtain the global parameterization as shown in Fig.~\ref{fig:Ex1}(f). Table~\ref{tab:Ex1} records the mean absolute error between the prescribed Beltrami coefficient and the Beltrami coefficient of the parameterization result for each submesh, from which we see that the error is very small for all submeshes.

\subsection{Example 2: A multiply-connected face mesh with 3 inner holes}
We then test our algorithm on another multiply-connected face mesh with 3 inner holes as shown in Fig.~\ref{fig:Ex2}. Fig.~\ref{fig:Ex2}(a) shows the face mesh partitioned into 6 submeshes. We first compute the free-boundary quasi-conformal parameterization of each submesh respectively. Then, as shown in Fig.~\ref{fig:Ex2}(b)--(d), we weld three pairs of submesh boundaries and transform the inner boundaries of the results into circles. After that, we continue to perform welding to obtain the global mapping of the boundaries as shown in Fig.~\ref{fig:Ex2}(e)--(f). It can be observed that the inner boundaries remain very close to circles under the map. We then apply the geodesic algorithm to transform the outer boundary to a circle as shown in Fig.~\ref{fig:Ex2}(g). Finally, we solve the Laplace equation for each submesh with the updated boundary condition to obtain the global quasi-conformal parameterization as shown in Fig.~\ref{fig:Ex2}(h). The mean absolute error in the Beltrami coefficients is recorded in Table~\ref{tab:Ex2}, from which we can again see that the parameterization is very accurate.

\begin{figure}[t!]
\centering
  \includegraphics[width=\linewidth]{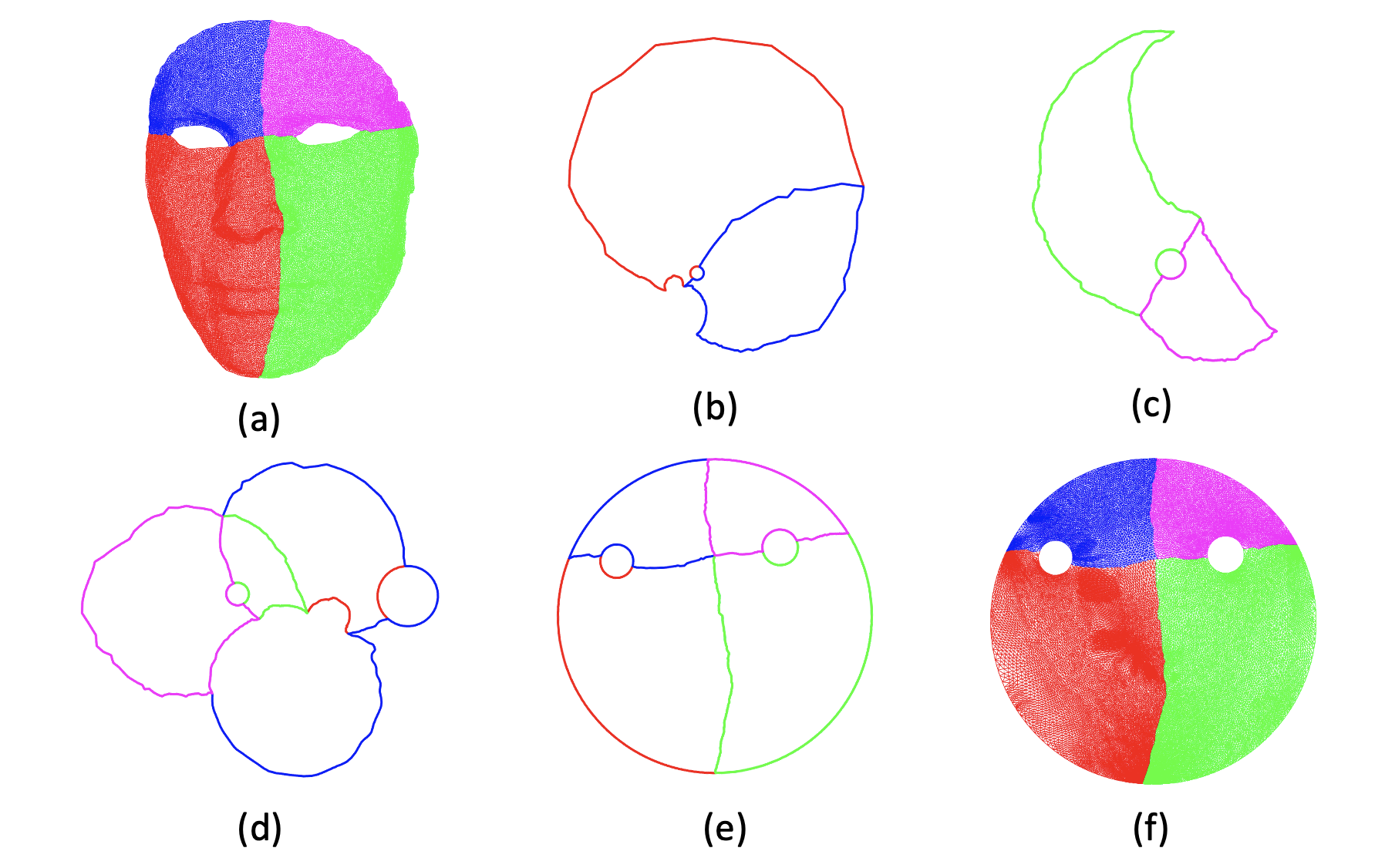}
\caption{Parameterizing a multiply-connected mesh with 2 holes using our PGQCM algorithm.}
\label{fig:Ex1}
\end{figure} 

\begin{table}[t!]
    \centering
    \begin{tabular}{c|c}
        Submesh & Error $e$\\ \hline 
        Submesh 1 & 0.0130\\
        Submesh 2 & 0.0106\\
        Submesh 3 & 0.0074\\
        Submesh 4 & 0.0088\\
    \end{tabular}
    \caption{Mean absolute error in Beltrami coefficients $\mu$ for each submesh in Fig.~\ref{fig:Ex1}.}
    \label{tab:Ex1}
\end{table}

\subsection{Example 3: A synthetic mesh with 4 inner holes}
We now consider parameterizing a synthetic multiply-connected mesh with 4 inner holes as shown in Fig.~\ref{fig:Ex3}. Fig.~\ref{fig:Ex3}(a) shows the original mesh partitioned into 8 submeshes. After computing the free-boundary quasi-conformal parameterization for each submesh, we weld 4 pairs of submesh boundaries and transform the inner boundary of each of them into a circle as shown in Fig.~\ref{fig:Ex3}(b)--(e). Then, we weld the results of Fig.~\ref{fig:Ex3}(b)--(c) into Fig.~\ref{fig:Ex3}(f) and those of Fig.~\ref{fig:Ex3}(d)--(e) into Fig.~\ref{fig:Ex3}(g). It can be observed that the inner boundaries are still very close to circles after the welding step. In Fig.~\ref{fig:Ex3}(h), we show the global boundary condition obtained by welding the results of Fig.~\ref{fig:Ex3}(f)--(g). We then transform the outer boundary into a circle to obtain the result in Fig.~\ref{fig:Ex3}(i). Finally, we solve that Laplace equation for each submesh with the updated boundary condition to obtain the global parameterization in Fig.~\ref{fig:Ex3}(j). As shown in Table~\ref{tab:Ex3}, the mean absolute error in the Beltrami coefficients is very small for all submeshes.

\begin{figure}[t!]
\centering
  \includegraphics[width=\linewidth]{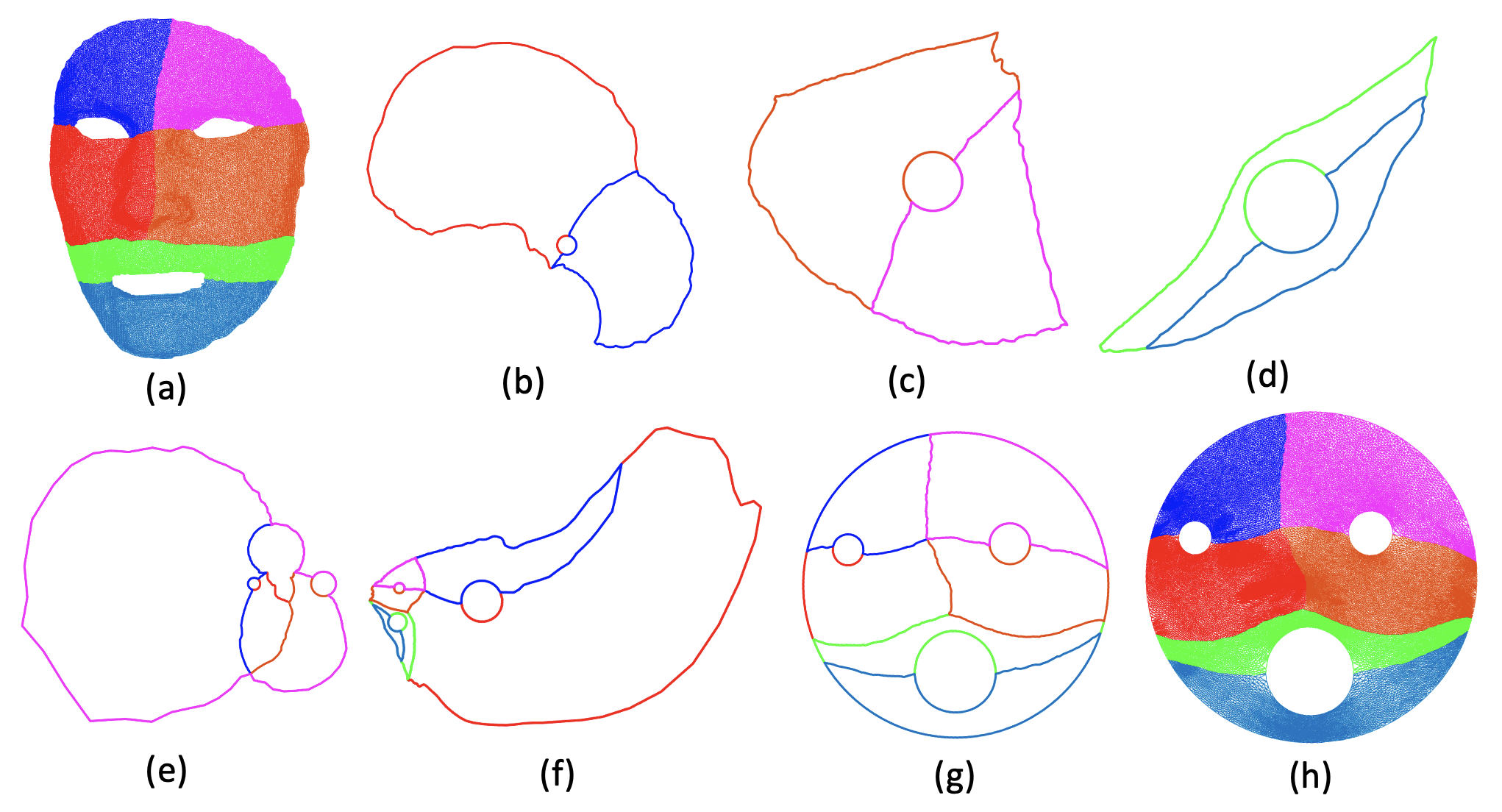}
\caption{Parameterizing a multiply-connected mesh with 3 holes using our PGQCM algorithm.}
\label{fig:Ex2}
\end{figure} 

\begin{table}[ht!]
    \centering
    \begin{tabular}{c|c}
        Submesh & Error $e$\\ \hline 
        Submesh 1 & 0.0213\\
        Submesh 2 & 0.0107\\
        Submesh 3 & 0.0139\\
        Submesh 4 & 0.0118\\
        Submesh 5 & 0.0131\\
        Submesh 6 & 0.0087\\
    \end{tabular}
    \caption{Mean absolute error in Beltrami coefficients $\mu$ for each submesh in Fig.~\ref{fig:Ex2}.}
    \label{tab:Ex2}
\end{table}

\begin{figure}[t!]
\centering
  \includegraphics[width=\linewidth]{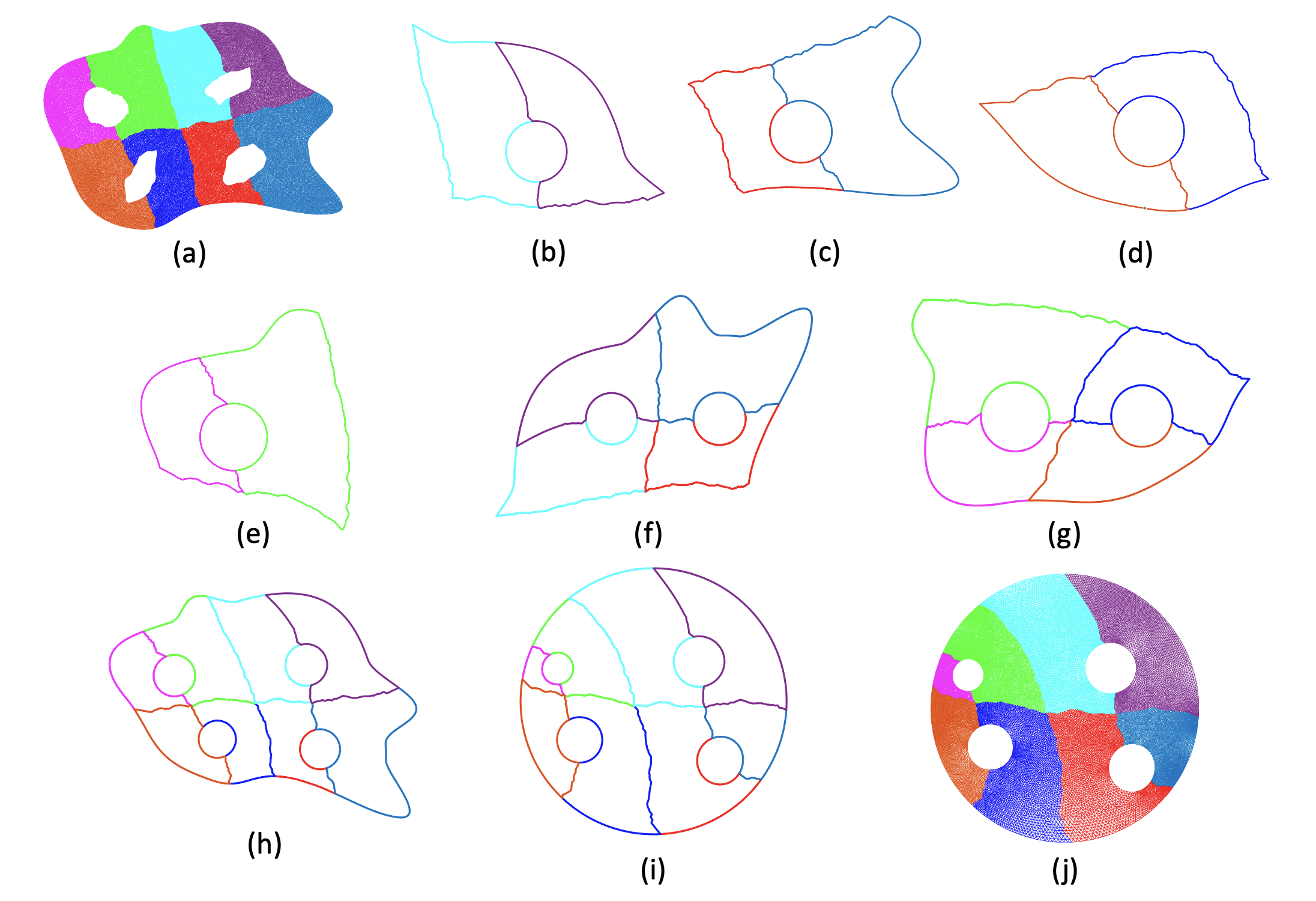}
\caption{Parameterizing a multiply-connected mesh with 4 holes using our PGQCM algorithm.}
\label{fig:Ex3}
\end{figure} 

\begin{table}[ht!]
    \centering
    \begin{tabular}{c|c}
        Submesh & Error $e$\\ \hline 
        Submesh 1 & 0.0064\\
        Submesh 2 & 0.0071\\
        Submesh 3 & 0.0089\\
        Submesh 4 & 0.0135\\
        Submesh 5 & 0.0054\\
        Submesh 6 & 0.0117\\
        Submesh 7 & 0.0056\\
        Submesh 8 & 0.0038\\
    \end{tabular}
    \caption{Mean absolute error in Beltrami coefficients $\mu$ for each submesh in Fig.~\ref{fig:Ex3}.}
    \label{tab:Ex3}
\end{table}

\subsection{Comparison between our proposed method and other parameterization methods}
After demonstrating the effectiveness of our proposed PGQCM method using various examples, we compare our method with other existing conformal and quasi-conformal parameterization methods in terms of the accuracy and efficiency. 

We first compare our proposed method with the QCMC iterative method for quasi-conformal parameterization~\cite{ho2016qcmc}. As shown in Table~\ref{tab:time_qcmc}, our method is significantly faster than the QCMC method by over 95\% on average for coarse and moderately dense meshes. For dense meshes, either our method is nearly 100 times faster or the QCMC method even fails to compute the desired mapping. This can be explained by the use of the divide-and-conquer strategy with parallelization in our algorithm. Also, the mean absolute error in the Beltrami coefficients of our method is generally much smaller than that of QCMC, especially for moderate and dense meshes. The experiments show that our method is more advantageous for computing quasi-conformal parameterization of multiply-connected surfaces.

\begin{table}[t]
    \centering
    \begin{tabular}{c|c|c|c|c|c}
        \multirow{2}{*}{Mesh} & \multirow{2}{*}{\# vertices} & \multicolumn{2}{c|}{PGQCM} & \multicolumn{2}{c}{QCMC} \\ \cline{3-6}  &  & Time (s) & Error $e$ & Time (s) & Error $e$ \\ \hline
        Amoeba 1 & 7322 & 0.1980 & 0.0389 & 7.8300 & 0.0420 \\
        Amoeba 2 & 27755 & 1.0229 & 0.0082 & 35.5800 & 0.0281 \\
        Alex & 13969 & 0.6515 & 0.0129 & 14.5116 & 0.0255 \\
        David 1 & 47550 & 0.9883 & 0.0108 & 28.4376 & 0.0234 \\
        David 2 & 48853 & 0.8251 & 0.0083 & 28.8781 & 0.0225 \\
        Face & 518890 & 14.5266 & 0.0014 & 1233.2295 & 0.0139 \\
        Stripe & 720150 & 21.6211 & 0.0024 & Failed & N/A \\
        Catenary & 1113041 & 37.8381 & 0.0015 & Failed & N/A \\
    \end{tabular}
    \caption{Comparison between PGQCM and QCMC~\cite{ho2016qcmc} for quasi-conformal parameterization of multiply-connected open surfaces in terms of the computational time and the mean absolute error in the Beltrami coefficients.}
    \label{tab:time_qcmc}
\end{table}

\begin{table}[t]
    \centering
    \begin{tabular}{c|c|c|c|c|c}
        \multirow{2}{*}{Mesh} & \multirow{2}{*}{\# vertices} & \multicolumn{2}{c|}{PGQCM} & \multicolumn{2}{c}{PACM} \\ \cline{3-6}  &  & Time (s) & Error $e$ & Time (s) & Error $e$ \\ \hline
        Amoeba 1 & 7322 & 0.1899 & 0.0173 & 0.5232 & 0.0106 \\
        Amoeba 2 & 27755 & 1.0185 & 0.0078 & 5.1781 & 0.0044 \\
        Alex & 13969 & 0.6559 & 0.0127 & 2.5095 & 0.0218 \\
        David 1 & 47550 & 0.8107 & 0.0106 & 3.9559 & 0.0213 \\
        David 2 & 48853 & 0.7783 & 0.0079 & 3.3490 & 0.0086 \\
        Face & 518890 & 11.6979 & 0.0013 & 100.2848 & 0.0041 \\
        Stripe & 720150 & 17.7237 & 0.0029 & 214.6671 & 0.0198 \\
        Catenary & 1113041 & 31.8079 & 0.0014 & 349.2447 & 0.0186 \\
    \end{tabular}
    \caption{Comparison between PGQCM and PACM~\cite{choi2021efficient} for conformal parameterization of multiply-connected open surfaces in terms of the computational time and the mean absolute error in the Beltrami coefficients.}
    \label{tab:time_pacm}
\end{table}

Next, we compare our proposed method with the recently developed PACM algorithm~\cite{choi2021efficient}. In particular, since the PACM method only works for the conformal parameterizations of multiply-connected surfaces, here we set the target Beltrami coefficient in our algorithm to be $\mu \equiv 0$ and compute conformal parameterizations for the comparison. As shown in Table~\ref{tab:time_pacm}, our method is faster than the PACM method by over 80\% on average. Also, the mean absolute error in the Beltrami coefficients of our method is generally much smaller than that of PACM for moderate and dense meshes. This shows that our method is not only useful for quasi-conformal parameterization but also for conformal parameterization of multiply-connected surfaces.

To further explain the significant improvement in the computational efficiency achieved by our method, note that computing a free-boundary quasi-conformal map for a global mesh requires solving a large sparse linear system. More specifically, for the global mapping of a triangle mesh with $N$ vertices, one needs to solve a linear system of size $2N \times 2N$. However, if we partition the mesh into $k$ submeshes of the same size, we only need to solve $k$ much smaller linear systems of size $\frac{2N}{k}\times \frac{2N}{k}$. Suppose the original computation cost is $C$. By partitioning the mesh into submeshes, we reduce the cost to $\frac{C}{k^{1/2}}$. Since partitioning the surface enables us to apply parallel computing to compute the parameterization, the computational time can be further reduced by a large extent. Also, when the mesh size is extremely large, other existing global mapping methods may fail due to the extremely large linear systems involved. By contrast, by partitioning the surface, in our method we only need to handle smaller linear systems, which are much easier to solve. Note that we also need to take the additional computation cost of welding into account when analyzing the total computational cost. However, since the welding step only involves the boundary points and the complexity of the geodesic algorithm is $O(mn)$, where $m$ is the number of points that determine the map and $n$ is the number of points we want to update, the computational cost of the welding step is considerably less than that of the quasi-conformal parameterization step. Besides, as discussed in detail in Section~\ref{sec:4.5}, partitioning the mesh allows us to perform the parallel Koebe's iteration, which also help reduce the total computational cost. Altogether, our method greatly accelerates the computation of conformal and quasi-conformal parameterizations for multiply-connected surfaces. 

\section{Applications}
\label{sec:6}
\subsection{Texture mapping}
Our multiply-connected quasi-conformal parameterization method can be used for texture mapping on multiply-connected open surfaces. More specifically, after mapping a given multiply-connected 3D surface to a 2D circular domain, we can design the texture on the 2D circular domain freely, and then map the texture onto the mesh using the inverse mapping of the parameterization. 

Since conformal parameterizations preserve local geometry, they are commonly used for texture mapping so that the local distortion of the designed texture is small. Similar to~\cite{choi2021efficient}, we can set the prescribed Beltrami coefficient as $0$ in our proposed algorithm and compute a conformal parameterization for texture mapping. An example is given in the top row of Fig.~\ref{fig:texture}. Here, we first parameterize a multiply-connected human face mesh onto the 2D circular domain conformally using our method. Then, we design a checkerboard texture on the 2D circular domain and map the texture back onto the mesh via the parameterization. It can be observed that the right angles in the checkerboard pattern are well-preserved on the human face, which indicates that the local geometry is not distorted. 

Moreover, since our method is capable of computing quasi-conformal parameterizations, it grants us more flexibility in the texture mapping design. Specifically, we can prescribe the level of local geometric distortion at any point freely using the input Beltrami coefficient, which allows us to design textures with different visual effects via the quasi-conformal parameterization. An example is given in the bottom row of Fig.~\ref{fig:texture}. Note that the orthogonality of the checkerboard texture is well-preserved at the nose of the human face, while an angular distortion in the checkerboard pattern can be clearly observed at the chin and the forehead. This demonstrates the possibility of achieving different texture mapping effects using our proposed method.

\begin{figure}[t]
\centering
  \includegraphics[width=1.0\linewidth]{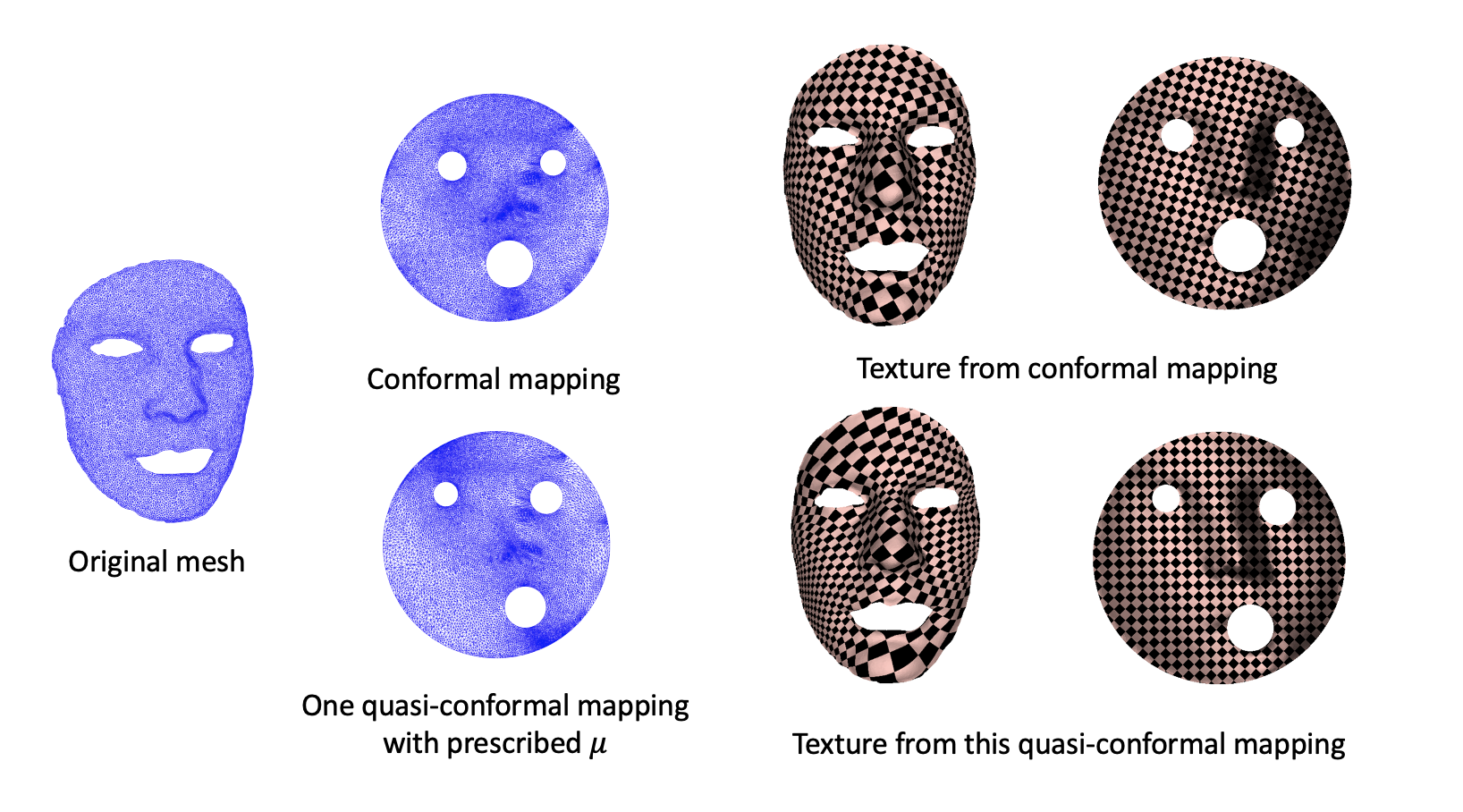}
\caption{Texture mapping using the conformal and quasi-conformal parameterizations obtained by our proposed algorithm.}
\label{fig:texture} 
\end{figure} 

\subsection{Remeshing}
Similar to~\cite{choi2021efficient}, our algorithm can be used to perform surface remeshing. Given a 3D multiply-connected open surface, we first parameterize it onto a standard 2D circular domain using our proposed method. Then, we can design a new mesh structure in the circular domain, and finally obtain the remeshed 3D surface with the new mesh structure using the inverse mapping. Two examples are given in Fig.~\ref{fig:remeshing}. In both examples, the remeshing in the circular domain is done using the \texttt{ddiff} and \texttt{dcircle} functions in the DistMesh toolbox~\cite{persson2004simple}. We remark that analogous to the texture mapping application described above, here we can achieve different remeshing effects by using different choices of the Beltrami coefficient in computing the quasi-conformal parameterization.

\begin{figure}[t]
\centering
  \includegraphics[width=\linewidth]{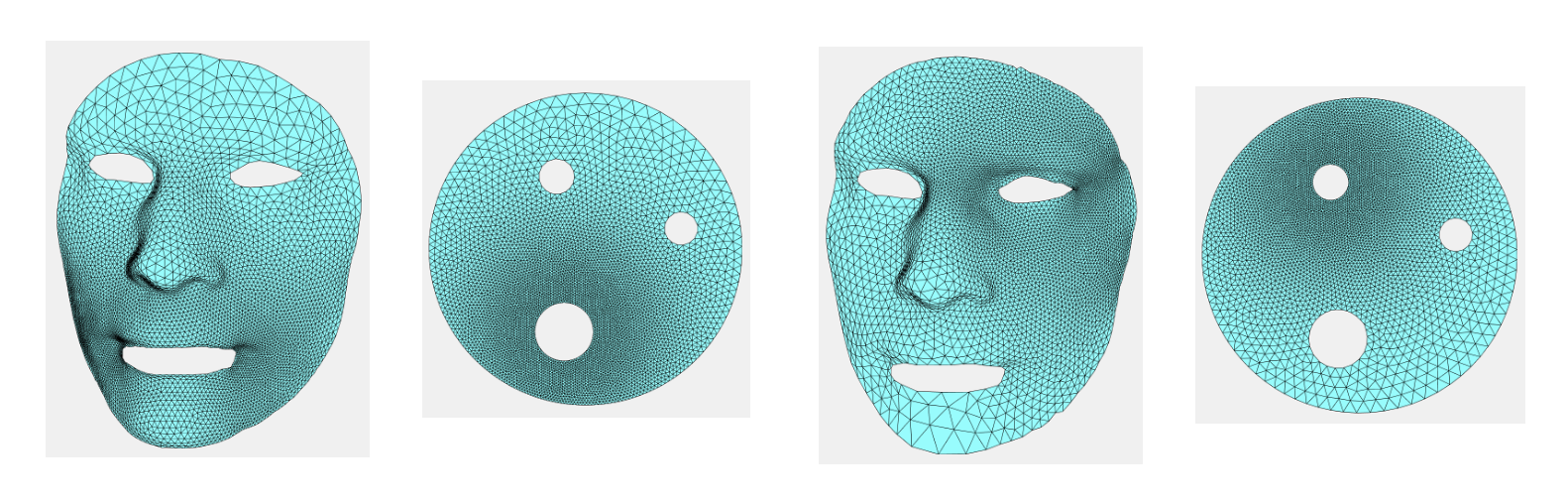}
\caption{Remeshing a 3D multiply-connected open surface using our proposed algorithm.}
\label{fig:remeshing}
\end{figure} 

\section{Discussion}
\label{sec:7}
In this paper, we have developed a novel parallelizable method for computing the global quasi-conformal parameterization of multiply-connected surfaces. Given any multiply-connected open surface and any prescribed Beltrami coefficient $\mu$, our method computes a quasi-conformal parameterization onto a 2D circular domain in a parallelizable manner. In particular, with the prescribed Beltrami coefficient being $\mu = 0$, conformal parameterizations can be efficiently obtained. When compared to other existing conformal and quasi-conformal parameterization methods for multiply-connected surfaces, our proposed method is more advantageous in both the efficiency and accuracy.

Below, we discuss two possible future research directions on further improving the performance of our method.

\subsection{Area distortion}
One known issue of conformal mapping is that the area distortion may be significant~\cite{kharevych2006discrete}, and this also happens in the quasi-conformal case~\cite{qiu2020inconsistent}. Therefore, it is natural to ask how we can reduce the area distortion of the quasi-conformal parameterization without altering the Beltrami coefficient $\mu$. Given a multiply-connected mesh $\mathcal{S} = (\mathcal{V},\mathcal{F})$ and the global quasi-conformal parameterization $\Phi:\mathcal{S}\to\mathbb{R}^2$ obtained by our PGQCM algorithm, we define the area distortion of $\Phi$ on a triangle face $T\in\mathcal{F}$ as follows~\cite{choi2020area,choi2020parallelizable}:
\begin{equation}
    d_{\text{area}}(T) = \log \dfrac{\text{Area}(\Phi(T))/(\Sigma_{T'\in\mathcal{F}}\text{Area}(\Phi(T')))}{\text{Area}(T)/(\Sigma_{T'\in\mathcal{F}}\text{Area}(T'))}.
\end{equation}
Specifically, the numerator of $d_{\text{Area}}(T)$ measures the ratio of the area of $T$ to the surface area of $\mathcal{S}$, the denominator measures the ratio of the area of $\Phi(T)$ to the total area of $\Phi(\mathcal{S})$, and $d_{\text{Area}}(T)$ is the logged area ratio. Note that $d_{\text{Area}}(T) \approx 0$ indicates that the area distortion is small, while a large $d_{\text{Area}}(T)$ indicates that the area distortion is large. If the surface area of $\mathcal{S}$ and $\Phi(\mathcal{S})$ is not equal, we can simply compose a normalization map $cz$ for some constant $c$ to make them equal.\\
\indent Similar to~\cite{choi2020parallelizable,choi2021efficient}, we can consider reducing the area distortion of our parameterization by composing an automorphism of the unit disk after obtaining the global parameterization in the last step in Algorithm~\ref{alg:4}. Note that the Beltrami coefficient will not be changed by an automorphism of the unit disk as suggested by the composition formula in Equation~\eqref{eq:3}. Also, note that M\"{o}bius transformations always map circles to circles or straight lines, and in our case we compose an automorphism of the unit circle and so we will only have the former case. Therefore, the circular inner boundaries will be mapped to circles under the automorphism. We can search for an optimal automorphism
    $f(z) = \dfrac{z-\alpha}{1-\bar{\alpha}z}$, 
where $\alpha \in \mathbb{C}$ satisfies $\abs{\alpha}<1$, such that $f\circ\Phi$ minimizes the area distortion $\sum_{T\in \mathcal{F}}d_{\text{Area}}(T)$. The map $f\circ \Phi$ will then be the desired global quasi-conformal parameterization with area distortion reduced. However, note that this approach involves handling the global mesh and hence may not be computationally efficient or feasible if the input mesh is dense. In our future work, we plan to develop methods for reducing the area distortion in a parallelizable manner. We also plan to explore other possible measures of the area distortion and optimization methods for improving the performance of the area correction. 

\subsection{Acceleration of our algorithm}
In recent decades, parallel computing has been widely studied and applied for high-performance computing on large datasets. While we have demonstrated the efficiency of our parallelizable algorithm for computing quasi-conformal parameterizations of meshes, especially for large meshes, there is still room to further speed up the computation as outlined below. 

In our experiments, for simplicity we perform the computation using the Parallel Computing Toolbox in MATLAB. Although we can already achieve a significant improvement in the performance when compared to the prior methods, the Parallel Computing Toolbox in MATLAB may not be the best choice for our proposed method. For instance, as mentioned by~\cite{choi2020parallelizable}, some of the MATLAB built-in functions such as \texttt{fminunc} are not parallelizable under the parallel computing framework of MATLAB, and so MATLAB may not allow us to fully exploit parallelization in some steps of our proposed method. Therefore, we plan to consider other scientific computing software and platforms more specialized in parallel computing in our future work and evaluate the performance of our proposed method.

In our algorithm, we adopt a finite element approach to compute the quasi-conformal parameterizations, which requires us to solve large sparse linear systems accurately and efficiently. In our implementation, the backslash operator ($\backslash$) in MATLAB is used to solve linear systems. Besides this convenient built-in function in MATLAB, there are some alternatives that we may consider for solving the linear systems. One notable example is the combinatorial multigrid (CMG) method in~\cite{koutis2011combinatorial}, which is a hybrid graph-theoretic algebraic multigrid solver. Also, parallel sparse linear system solvers, such as the solvers by Peng and Spielman~\cite{peng2014efficient} and by Koutis and Miller~\cite{koutis2007linear}, may be considered and incorporated to our algorithm for further improving the computational efficiency.

\bibliographystyle{siam}
\bibliography{citation.bib}

\end{document}